\documentclass[12pt,a4paper]{article}

\usepackage[T1]{fontenc}
\usepackage[utf8]{inputenc}
\usepackage{lmodern}
\usepackage[a4paper,margin=1in]{geometry}
\usepackage{amsmath,amssymb,amsthm,mathtools}
\usepackage{enumitem}
\usepackage{microtype}
\usepackage[colorlinks=true,linkcolor=blue,citecolor=blue,urlcolor=blue]{hyperref}
\usepackage{xcolor}
\usepackage{comment}

\numberwithin{equation}{section}

\newtheorem{theorem}{Theorem}[section]
\newtheorem{proposition}[theorem]{Proposition}
\newtheorem{lemma}[theorem]{Lemma}
\newtheorem{corollary}[theorem]{Corollary}
\newtheorem{definition}[theorem]{Definition}
\newtheorem{remark}[theorem]{Remark}

\newcommand{\Torus}{\mathbb T}
\newcommand{\Dt}{\partial_t}
\newcommand{\D}{D}
\newcommand{\gam}{\gamma_d}
\newcommand{\divg}{\operatorname{div}_{\gamma_d}}

\newcommand{\ip}[2]{\left\langle #1,#2\right\rangle}

\newcommand{\R}{\mathbb R}

\newcommand{\Om}{\Omega}

\newcommand{\dd}{\,d}
\newcommand{\norm}[1]{\left\lVert #1\right\rVert}

\newcommand{\pair}[2]{\left\langle #1,#2\right\rangle}

\newcommand{\parbd}{\partial_{\mathrm{par}}\Omega_T}
\def\p{\partial}
\def \div {{\text{\rm div}}}

\newcommand{\erica}[1]{\textcolor{purple}{#1}}

\newcommand{\anna}[1]{\textcolor{blue}{#1}}
\newcommand{\an}[1]{\textcolor{green!75!black}{#1}}

\title{A variational approach to Ornstein–Uhlenbeck evolution equations and obstacle problems}
\author{Erica Ipocoana\thanks{Freie Universit\"at Berlin, Department of Mathematics and Computer Science, Arnimallee 9, 14195 Berlin, Germany. Email: \texttt{erica.ipocoana@fu-berlin.de}.}
\and
Annalaura Rebucci\thanks{Max Planck Institute for Mathematics in the Sciences, Inselstrasse 22, 04103 Leipzig, Germany. Email: \texttt{annalaura.rebucci@mis.mpg.de}.}}
\date{\today}

\begin{document}
\maketitle

\begin{abstract}
We develop a variational formulation of the Ornstein--Uhlenbeck evolution
equation based on a Gaussian divergence structure, in which the drift term
is absorbed into the first variation of a weighted Dirichlet energy rather
than treated as a lower-order perturbation. For the linear equation, the
distributional, energy weak, and variational notions of solution coincide
and determine the same unique solution. The same framework yields a
variational formulation of the associated parabolic obstacle problem,
for which we prove existence and then
uniqueness under a standard additional regularity assumption on the obstacle.
The same mechanism applies to nonlinear Gaussian gradient flows associated
with convex integrands of standard $p$-growth. Finally, we extend the
Gaussian-divergence formulation to the force-free kinetic Fokker--Planck
equation with periodic position variables, establishing the same
equivalence for the linear equation.
\end{abstract}

\noindent\textbf{Keywords:} Ornstein-Uhlenbeck operator; Gaussian divergence; weighted Sobolev spaces; nonhomogeneous Cauchy--Dirichlet data; evolutionary variational solutions; parabolic obstacle problem; convex growth; parabolic standard growth.\par
\medskip
\noindent\textbf{MSC 2020:} 35K20; 35K55; 35D30; 46E35; 35R35.

\section{Introduction}\label{sec:intro}

We develop a variational approach to Ornstein-Uhlenbeck evolution equations, their nonlinear analogues, and the associated parabolic obstacle problem. We also discuss how the same approach extends to the kinetic Fokker-Planck equation. Our main idea is to introduce a Gaussian formulation that incorporates the Ornstein-Uhlenbeck drift into the divergence structure and allows the equations to be treated variationally. In particular, the drift becomes part of the first variation of a weighted energy, rather than being treated separately as a lower-order term. This provides a common variational mechanism that can be used in the different settings considered below.

\subsection{The Ornstein-Uhlenbeck equation}
\label{subsec:intro-ou}\label{rem:intro-ou-specialization}

Our basic model is the Ornstein-Uhlenbeck equation 
\begin{equation}\label{O-U}
\p_t u= \Delta u - x\cdot D u,\quad \textrm{in } \Omega_T,
\end{equation}
where $\Omega_T:=\Omega \times (0,T)$ denotes the space-time cylinder, $\Omega\subset \R^d$ is a bounded Lipschitz domain and $T >0$. Throughout the paper $D$ denotes the spatial gradient in $\R^d$ and $\p_t$ the derivative with respect to time $t$, while points in space-time are denoted by $z=(x,t)$ with $x \in \R^d$ and $t\in \R$.
  
For a smooth vector field $a:\mathbb R^d\to\mathbb R^d$, we define its Gaussian divergence by
\[
 \divg a:=\operatorname{div}a-x\cdot a.
\]
Since
\[
 \divg Du=\Delta u-x\cdot Du,
\]
equation \eqref{O-U} can be written as
\begin{equation}\label{eq:intro-ou-gaussian}
\p_t u = \div_{\gamma_d}D_{\xi}f \left( Du \right), \quad \textrm{in } \Omega_T,
\end{equation}
for $f(\xi)=\frac{1}{2}|\xi|^2$, since $D_{\xi}f(Du)=Du$.
The relevance of this formulation comes from the Gaussian integration-by-parts identity
\begin{equation}\label{eq:intro-ibp}
 \int_\Omega \phi\,\divg Du\,d\gamma_d
 =-\int_\Omega Du\cdot D\phi\,d\gamma_d
\end{equation}
for test functions $\phi$ with zero trace on $\partial\Omega$. Thus $-\divg Du$ is the first variation of the Gaussian Dirichlet energy
\[
 \mathcal E(u):=\frac12\int_\Omega |Du|^2\,d\gamma_d.
\]
Consequently, \eqref{eq:intro-ou-gaussian} can be viewed as the $L^2(\Omega,\gamma_d)$-gradient flow of $\mathcal E$.

We prescribe nonhomogeneous Cauchy--Dirichlet data on the parabolic boundary
\[
 \partial_{\mathrm{par}}\Omega_T
 :=(\partial\Omega\times(0,T))\cup(\Omega\times\{0\}),
\]
and consider
\begin{equation}\label{eq:main-ou-problem}
\begin{cases}
 \partial_tu-\divg Du=0&\text{in }\Omega_T,\\
 u=g&\text{on }\partial_{\mathrm{par}}\Omega_T.
\end{cases}
\end{equation}
In the scalar case we use the Hilbert triple
\begin{equation}\label{eq:hilbert-triple}
 V:=H^1_0(\Omega,\gamma_d),\qquad
 H:=L^2(\Omega,\gamma_d),\qquad
 V^*:=H^{-1}(\Omega,\gamma_d),
\end{equation}
and write
\[
 H^1_g(\Omega,\gamma_d):=g+H^1_0(\Omega,\gamma_d).
\]
We assume
\begin{equation}\label{eq:ou-data-assumption}
 g\in L^2(0,T;H^1(\Omega,\gamma_d))\cap C([0,T];H),
 \qquad
 \partial_tg\in L^2(0,T;V^*).
\end{equation}

The classical energy weak formulation requires a time derivative of the solution in $L^2(0,T;V^*)$. The evolutionary variational formulation has a different structure: the time derivative falls on the comparison map, while no time derivative of the solution is assumed a priori. This type of formulation was developed for nonlinear parabolic equations and systems, in particular in the works of B\"ogelein--Duzaar--Marcellini \cite{BDM2013ARMA,BDM2014JDE} and Marcellini \cite{Marcellini2020}.

In the quadratic Ornstein-Uhlenbeck case, the comparison inequality takes the form
\begin{equation}\label{eq:main-variational-inequality-intro}
\begin{aligned}
 &\int_0^\tau\pair{\partial_tv}{v-u}_{V}\,dt
 +\frac12\int_0^\tau\int_\Omega
       \bigl(|Dv|^2-|Du|^2\bigr)\,d\gamma_d\,dt\\
 &\qquad\geq
 \frac12\|(v-u)(\cdot,\tau)\|_H^2
 -\frac12\|v(\cdot,0)-g(\cdot,0)\|_H^2,
\end{aligned}
\end{equation}
for every $\tau\in(0,T]$ and every comparison map
\begin{equation}\label{eq:ou-comparison-class}
 v\in L^2\bigl(0,\tau;H^1_g(\Omega,\gamma_d)\bigr),
 \qquad
 \partial_tv\in L^2(0,\tau;V^*).
\end{equation}
The unknown itself is only required to belong to
\[
 u\in L^2\bigl(0,T;H^1_g(\Omega,\gamma_d)\bigr)\cap C([0,T];H),
 \qquad u(\cdot,0)=g(\cdot,0).
\]
The general definition, which includes nonlinear Gaussian energies, is given in Subsection~\ref{subsec:intro-gaussian} below. Our first aim is to prove that, for the Ornstein--Uhlenbeck equation, the distributional, energy weak, and variational formulations determine the same solution.

\subsection{The Ornstein--Uhlenbeck obstacle problem}
\label{subsec:intro-obstacle}
In the scalar case, the Gaussian Dirichlet energy also gives a variational
formulation of the parabolic obstacle problem.  We retain the Hilbert triple
\eqref{eq:hilbert-triple} and the parabolic datum $g$ from
\eqref{eq:ou-data-assumption}.

Given an obstacle $\psi$ with $g\geq\psi$
almost everywhere, the admissible class on $\Omega_\tau:=\Omega\times(0,\tau)$ is
\[
 K_{\psi,g}(\Omega_\tau)
 :=\bigl\{w\in L^2(0,\tau;H^1_g(\Omega,\gamma_d)):
                 w\geq\psi\ \text{a.e. in }\Omega_\tau\bigr\}.
\]
Thus the obstacle restricts the comparison class, while the energy and the
Cauchy--Dirichlet conditions remain those of the Ornstein--Uhlenbeck problem.
A solution belongs to $K_{\psi,g}(\Omega_T)\cap C([0,T];H)$, has initial
value $g(\cdot,0)$, and satisfies
\begin{equation}\label{eq:intro-obstacle-variational}
\begin{aligned}
 &\int_0^\tau\pair{\partial_t v}{v-u}_{V}\,dt
   +\frac12\int_0^\tau\int_\Omega
                  \bigl(|Dv|^2-|Du|^2\bigr)\,d\gamma_d\,dt\\
 &\qquad\geq
   \frac12\|(v-u)(\cdot,\tau)\|_H^2
   -\frac12\|v(\cdot,0)-g(\cdot,0)\|_H^2
\end{aligned}
\end{equation}
for every $\tau\in(0,T]$ and every $v\in K_{\psi,g}(\Omega_\tau)$ with
$\partial_t v\in L^2(0,\tau;V^*)$.  This is the constrained version of
\eqref{eq:main-variational-inequality-intro}; the precise definition is given in
Definition~\ref{def:obstacle-solution}.  For sufficiently regular solutions,
it corresponds to
\[
 u\geq\psi,\qquad
 \partial_tu-\divg Du\geq0,\qquad
 (u-\psi)(\partial_tu-\divg Du)=0,
\]
together with the prescribed parabolic boundary data.

Section~\ref{sec:obstacle} studies this problem by following the variational
approach of B\"ogelein--Duzaar--Scheven \cite{BDS2017}.  It first treats
regular data by implicit time discretization and minimization of the
Gaussian Dirichlet energy over the obstacle constraint.  It then considers
approximation of the obstacle and of the parabolic datum under
\[
 \psi\in L^2(0,T;H^1(\Omega,\gamma_d)),\qquad
 g\in L^2(0,T;H^1(\Omega,\gamma_d))\cap H^1(0,T;H),\qquad
 g\geq\psi\ \text{a.e.}
\]
The existence argument is developed under
\eqref{eq:obstacle-sharp-hypothesis}, whereas the uniqueness results,
Theorem~\ref{thm:obstacle-uniqueness}, additionally assumes
$\partial_t\psi\in L^2(0,T;H)$.  

Beyond its intrinsic interest, the parabolic obstacle problem for
Ornstein--Uhlenbeck and Kolmogorov-type operators is also motivated by
mathematical finance, where it models the pricing of American-style and
path-dependent options with early-exercise features. The regularity
theory and the obstacle problem for the associated degenerate Kolmogorov
operators were developed by the authors in \cite{IpocoanaRebucci2021} and
\cite{AnceschiRebucci}, respectively. The present Gaussian-divergence approach
offers an alternative, energy-based route to the model
Ornstein--Uhlenbeck case.

\subsection{Nonlinear equations in Gaussian divergence form}
\label{subsec:intro-gaussian}

The Gaussian formulation is not restricted to the quadratic Ornstein--Uhlenbeck energy. Let $N\in\mathbb N$ and let
\[
 1<p<\infty,\qquad
 p\geq\frac{2d}{d+2},\qquad
 p':=\frac{p}{p-1}.
\]
We consider a Carath\'eodory integrand
\[
 f:\Omega\times\mathbb R^N\times\mathbb R^{N\times d}
 \longrightarrow[0,+\infty)
\]
satisfying the following assumptions:
\begin{enumerate}[label=\textup{(H\arabic*)},leftmargin=2.5em]
\item\label{hyp:p-car}
for almost every $x\in\Omega$, the map
$(u,\xi)\mapsto f(x,u,\xi)$ is convex and of class $C^1$ on
$\mathbb R^N\times\mathbb R^{N\times d}$;

\item\label{hyp:p-growth}
there exist constants $0<\nu\leq L<\infty$ and a nonnegative function
$a\in L^1(\Omega,\gamma_d)$ such that
\begin{equation}\label{eq:standard-p-growth}
 \nu|\xi|^p-a(x)
 \leq f(x,u,\xi)
 \leq L\bigl(1+|u|^p+|\xi|^p\bigr)
\end{equation}
for almost every $x\in\Omega$ and every
$(u,\xi)\in\mathbb R^N\times\mathbb R^{N\times d}$;

\item\label{hyp:p-derivative}
the first derivatives satisfy
\begin{equation}\label{eq:first-derivative-growth}
 |D_uf(x,u,\xi)|+|D_\xi f(x,u,\xi)|
 \leq L\bigl(1+|u|^{p-1}+|\xi|^{p-1}\bigr)
\end{equation}
for almost every $x\in\Omega$ and every
$(u,\xi)\in\mathbb R^N\times\mathbb R^{N\times d}$.
\end{enumerate}

For the Gaussian energy
\begin{equation}\label{eq:intro-general-energy}
 \mathcal F_{\gamma_d}(u)
 :=\int_\Omega f(x,u,Du)\,d\gamma_d,
\end{equation}
the corresponding negative $L^2(\Omega,\gamma_d;\mathbb R^N)$-gradient flow is
\begin{equation}\label{eq:p-main-gaussian-intro}
\begin{cases}
 \partial_tu=\divg D_\xi f(x,u,Du)-D_uf(x,u,Du)
      &\text{in }\Omega_T,\\
 u=g&\text{on }\partial_{\mathrm{par}}\Omega_T,
\end{cases}
\end{equation}
where $\divg$ acts row-wise on matrix fields.

For a spatial datum $g$ we set
\[
  W^{1,p}_g(\Omega,\gamma_d;\mathbb R^N)
  :=\bigl\{w\in W^{1,p}(\Omega,\gamma_d;\mathbb R^N):
       w-g\in W^{1,p}_0(\Omega,\gamma_d;\mathbb R^N)\bigr\}.
\]
When $g=g(\cdot,t)$, this notation is understood pointwise in time.
Thus $u=g$ on $\partial_{\mathrm{par}}\Omega_T$ means
\[
  u\in L^p\bigl(0,T;W^{1,p}_g(\Omega,\gamma_d;\mathbb R^N)\bigr),
  \qquad u(0)=g(0)\quad\text{in }L^2(\Omega,\gamma_d;\mathbb R^N).
\]

Set
\begin{equation}\label{eq:p-triple}
  V_p:=W^{1,p}_0(\Omega,\gamma_d;\mathbb R^N),
  \qquad H:=L^2(\Omega,\gamma_d;\mathbb R^N),
  \qquad V_p^*:=W^{-1,p'}(\Omega,\gamma_d;\mathbb R^N)=(V_p)^*.
\end{equation}
The condition $p\geq2d/(d+2)$ yields the dense continuous embedding
$V_p\hookrightarrow H$ and hence the evolution triple
$V_p\hookrightarrow H\equiv H^*\hookrightarrow V_p^*$.  The embedding is
compact when $p>2d/(d+2)$.
We assume that the parabolic datum satisfies
\begin{equation}\label{eq:p-data-intro}
  g\in L^p\bigl(0,T;W^{1,p}(\Omega,\gamma_d;\mathbb R^N)\bigr)
       \cap C([0,T];H),
  \qquad
  \partial_tg\in L^{p'}(0,T;V_p^*).
\end{equation}

\begin{definition}[Energy weak solution]\label{def:p-weak-intro}
A map $u$ is an energy weak solution of \eqref{eq:p-main-gaussian-intro} if
\begin{equation}\label{eq:p-weak-class}
  {u\in L^p\bigl(0,T;W^{1,p}_g(\Omega,\gamma_d;\mathbb R^N)\bigr)} \cap C([0,T];H),
  \qquad \partial_tu\in L^{p'}(0,T;V_p^*),
\end{equation}
$u(\cdot,0)=g(\cdot,0)$ in $H$, and
\begin{equation}\label{eq:p-weak}
\begin{aligned}
  \int_0^T\pair{\partial_tu}{\phi}_{V_p}\,dt
  &+\int_0^T\int_\Omega D_\xi f(x,u,Du)\cdot D\phi\,d\gamma_d\,dt\\
  &+\int_0^T\int_\Omega {D_u f(x,u,Du)}\cdot\phi\,d\gamma_d\,dt=0
\end{aligned}
\end{equation}
for every $\phi\in L^p(0,T;V_p)$.
\end{definition}

\begin{definition}[Variational solution]\label{def:p-var-intro}
A map
\[
 {u\in L^p\bigl(0,T;W^{1,p}_g(\Omega,\gamma_d;\mathbb R^N)\bigr)}
  \cap C([0,T];H),
  \qquad u(\cdot,0)=g(\cdot,0),
\]
is a variational solution of \eqref{eq:p-main-gaussian-intro} if, for every
$\tau\in(0,T]$ and every comparison map $v$ satisfying
\begin{equation}\label{eq:p-comparison-class}
  {v\in L^p\bigl(0,\tau;W^{1,p}_g(\Omega,\gamma_d;\mathbb R^N)\bigr)},
  \qquad \partial_tv\in L^{p'}(0,\tau;V_p^*),
\end{equation}
one has
\begin{equation}\label{eq:p-variational-intro}
\begin{aligned}
 &\int_0^\tau\pair{\partial_tv}{v-u}_{V_p}\,dt
 +\int_0^\tau\int_\Omega
      \bigl[f(x,v,Dv)-f(x,u,Du)\bigr]d\gamma_d\,dt\\
 &\qquad\ge
 \frac{1}{2}\norm{(v-u)(\cdot,\tau)}_H^2
 -\frac{1}{2}\norm{v(\cdot,0)-g(\cdot,0)}_H^2.
\end{aligned}
\end{equation}
\end{definition}
The term ${D_u f}$ does not appear separately in \eqref{eq:p-variational-intro}; it
is encoded in the full energy difference.  This is why convexity is required
in the pair $({u},\xi)$ rather than only in $\xi$. Taking
\[
 p=2,\qquad f(x,u,\xi)=\frac12|\xi|^2,
\]
gives exactly the quadratic comparison inequality \eqref{eq:main-variational-inequality-intro}. Thus the Ornstein--Uhlenbeck equation is the quadratic case of the general formulation, while the obstacle problem is its constrained version.

\subsection{The kinetic Fokker--Planck equation}
\label{subsec:intro-kinetic}

The same point of view also has a kinetic counterpart. Consider the force-free kinetic Fokker--Planck equation
\begin{equation}\label{eq:intro-kinetic-equation}
 (\partial_t+v\cdot D_x)u
 =\operatorname{div}_{\gamma_v}D_vu
 =\Delta_vu-v\cdot D_vu
\end{equation}
on
\[
 (0,T)\times\mathbb T^d_x\times\mathbb R^d_v.
\]
Here the appropriate reference measure is the mixed measure
\[
 dx\,d\gamma_d(v).
\]
With respect to this measure, diffusion in the velocity variable is symmetric, while the transport operator $v\cdot D_x$ is skew. Thus the Gaussian structure acts in the variable in which the Ornstein--Uhlenbeck diffusion is present.

The natural evolution derivative in this setting is
\[
 Y:=\partial_t+v\cdot D_x.
\]
At the energy level, $Yu$ can be controlled in a suitable dual space even though $\partial_tu$ and $v\cdot D_xu$ need not be controlled separately. This leads to a kinetic comparison formulation in which $Y$ replaces $\partial_t$ and $D_v$ replaces the full spatial gradient. In Section~\ref{sec:kinetic} we show, using the kinetic Cauchy theory of \cite{AAMN}, that the resulting variational formulation is equivalent to the energy weak formulation for the linear equation. We also record the corresponding formal nonlinear formula. The kinetic obstacle problem requires additional arguments and is left for future work.

\subsection{Main results}\label{subsec:intro-main-results}

We now state the main results for the problems introduced above.

\begin{theorem}[Convex Gaussian gradient flows with standard $p$-growth]
\label{thm:intro-general-p}
Assume \ref{hyp:p-car}--\ref{hyp:p-derivative} and \eqref{eq:p-data-intro}.  Then
\eqref{eq:p-main-gaussian-intro} has a unique energy weak solution in the sense of
Definition~\ref{def:p-weak-intro} and a unique variational solution in the sense of
Definition~\ref{def:p-var-intro}, and these solutions coincide.  Consequently,
the energy weak and variational formulations are equivalent.
\end{theorem}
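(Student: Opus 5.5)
The plan has three parts: existence of an energy weak solution by monotone operator theory, the implication "energy weak $\Rightarrow$ variational" via convexity, and uniqueness of variational solutions by a comparison argument with mollified maps.

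\emph{Existence.} Write $u=g+w$ with $w\in L^p(0,T;V_p)$, $w(0)=0$. Define the operator $A(t):V_p\to V_p^*$ by
\[
\langle A(t)w,\phi\rangle:=\int_\Omega\bigl[D_\xi f(x,g+w,Dg+Dw)\cdot D\phi+D_uf(x,g+w,Dg+Dw)\cdot\phi\bigr]\,d\gamma_d .
\]
It is well defined and bounded by \ref{hyp:p-derivative}, and hemicontinuous by dominated convergence. It is monotone because of the joint convexity \ref{hyp:p-car}: the pair $(D_uf,D_\xi f)$ is the gradient of a convex function of $(u,\xi)$. For coercivity we use the convexity inequality $\langle A(t)w,w\rangle\ge \int f(x,g+w,Dg+Dw)-f(x,g,Dg)\,d\gamma_d$ together with \ref{hyp:p-growth}. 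This gives $\nu\|D(g+w)\|_p^p-\|a\|_1-L\int(1+|g|^p+|Dg|^p)$, and a Gaussian Poincaré inequality on the bounded domain $\Omega$ (where $\gamma_d$ is comparable to Lebesgue measure) then bounds $\|w\|_{V_p}^p$. The shifted equation is $\partial_tw+A(t)w=-\partial_tg\in L^{p'}(0,T;V_p^*)$. By the Lions/Browder theorem for monotone evolution equations on the triple \eqref{eq:p-triple}, it has a solution $w$ with $\partial_tw\in L^{p'}(0,T;V_p^*)$. The embedding $W(0,T)\hookrightarrow C([0,T];H)$ then gives continuity in time. Uniqueness of energy weak solutions follows by testing the difference of two solutions with $u_1-u_2$, using the integration-by-parts formula in $W(0,T)$ and monotonicity.

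\emph{Energy weak $\Rightarrow$ variational.} Let $v$ be a comparison map as in \eqref{eq:p-comparison-class}. Test \eqref{eq:p-weak} on $(0,\tau)$ with $\phi=v-u\in L^p(0,\tau;V_p)$. Convexity gives $f(x,v,Dv)-f(x,u,Du)\ge D_uf\cdot(v-u)+D_\xi f\cdot D(v-u)$, so
\[
\int_0^\tau\!\!\int_\Omega[f(v)-f(u)]\,d\gamma_d\,dt\ge -\int_0^\tau\langle\partial_tu,v-u\rangle\,dt .
\]
Adding $\int_0^\tau\langle\partial_tv,v-u\rangle\,dt$ yields $\int_0^\tau\langle\partial_t(v-u),v-u\rangle\,dt=\tfrac12\|(v-u)(\tau)\|_H^2-\tfrac12\|v(0)-g(0)\|_H^2$, which is \eqref{eq:p-variational-intro}.

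\emph{Uniqueness of variational solutions.} This is the main obstacle, because a variational solution has no time derivative, so it cannot be plugged directly into its own inequality. Let $u$ be any variational solution and $\bar u$ the energy weak solution. I would choose comparison maps built from the exponential time mollification
\[
[u]_h(t)=e^{-t/h}g(0)+\tfrac1h\int_0^te^{(s-t)/h}u(s)\,ds ,
\]
corrected by $g-[g]_h$ so that the boundary values are right. The key properties are:
\begin{itemize}
\item $\partial_t[u]_h=\tfrac1h(u-[u]_h)$, so $\langle\partial_t[u]_h,[u]_h-u\rangle\le0$ up to terms involving $g$;
\item $[u]_h\to u$ in $L^p(0,T;W^{1,p})$ and in $C([0,T];H)$;
\item $\int f(x,[u]_h,D[u]_h)$ is controlled via convexity (Jensen) by a mollification of $\int f(x,u,Du)$.
\end{itemize}

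Concretely, I would take $v=\tfrac12\bigl([u]_h+\bar u\bigr)$ (suitably corrected) in the inequality for $u$. In parallel, I would use the monotone inequality satisfied by $\bar u$, obtained from the previous step, in which $u$ enters only through convexity. Adding the two, using joint convexity $f(\tfrac{a+b}2)\le\tfrac12f(a)+\tfrac12f(b)$, and letting $h\to0$ (the growth bound \ref{hyp:p-growth} justifies the passage to the limit in the energies), I expect to obtain $\tfrac14\|(u-\bar u)(\tau)\|_H^2\le0$ for every $\tau$. Hence $u=\bar u$, and the variational and energy weak solutions coincide. The delicate bookkeeping in this step is the treatment of the $g$-correction terms and of the initial condition. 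I would handle these using $\partial_tg\in L^{p'}(0,T;V_p^*)$ and $g\in C([0,T];H)$, following the corresponding arguments of B\"ogelein--Duzaar--Marcellini.
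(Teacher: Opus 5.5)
Your proposal is correct. The existence step and the weak-to-variational step coincide with the paper's Theorem~\ref{thm:p-var-existence} and Proposition~\ref{prop:p-weak-to-var}. Your coercivity bound, obtained by applying convexity at $(g,Dg)$, is actually a little cleaner than the paper's expansion at $(0,0)$, because it leaves no cross terms in $g$ to absorb.

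The uniqueness step is a variant of the paper's. The paper regularizes the midpoint of two arbitrary variational solutions, $g+R_h(\tfrac{u_1+u_2}{2}-g)$, and inserts it into both inequalities. You instead regularize only $u$ and average with the energy weak solution $\bar u$. Your corrected map $[u]_h+g-[g]_h$ is exactly $g+R_h(u-g)$ from Lemma~\ref{lem:time-regularization}. The properties you list, in particular convergence in $L^p(0,T;W^{1,p})$, hold for this corrected map, but not for $[u]_h$ itself when $g(0)$ is only in $H$. The Jensen bullet is not needed.

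Your version works. With $v_h=\tfrac12\bigl(g+R_h(u-g)+\bar u\bigr)$ one has $2v_h-u-\bar u=R_h(u-g)-(u-g)$. The $\partial_t\bar u$ term then vanishes in the limit, and the other time term has nonpositive $\limsup$ by \eqref{eq:bdm-affine-one-sided}. The two inequalities together give $\tfrac14\|(u-\bar u)(\tau)\|_H^2\le0$.

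What each route buys:
\begin{itemize}
\item \emph{The paper's symmetric argument} proves uniqueness intrinsically within the variational class, without appeal to existence of energy weak solutions. This is what carries over to the obstacle problem in Section~\ref{sec:obstacle}, where no energy weak solution is available.
\item \emph{Your argument} depends on $\bar u$. But once you allow that, the regularization is superfluous. Since $\partial_t\bar u\in L^{p'}(0,\tau;V_p^*)$, $v=\bar u$ is itself admissible in the inequality for $u$. Testing the weak formulation of $\bar u$ by $u-\bar u$ then yields
\[
\tfrac12\|(u-\bar u)(\tau)\|_H^2\le\int_0^\tau\!\!\int_\Omega\Bigl[f(\bar u)-f(u)+D_uf(\bar u)\cdot(u-\bar u)+D_\xi f(\bar u):D(u-\bar u)\Bigr]\,d\gamma_d\,dt\le0
\]
by convexity, with $f(w)$ short for $f(x,w,Dw)$. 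This is exactly the argument the paper uses in the kinetic case, Proposition~\ref{kin:prop:unconstrained-equivalence}.
\end{itemize}
So the difficulty you single out as the main one disappears in your own setup.
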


The Ornstein--Uhlenbeck equation is the quadratic case of this result. In that case the distributional formulation can also be included in the equivalence.

\begin{corollary}[Ornstein--Uhlenbeck equation]\label{cor:intro-main-ou}
Assume \eqref{eq:ou-data-assumption}.  Then the Ornstein--Uhlenbeck problem
\eqref{eq:main-ou-problem} has a unique variational solution and a unique
energy weak solution, and the two coincide.  Moreover, every distributional
weak solution in the spatial energy class is automatically an energy weak
solution.  Thus, in the Ornstein--Uhlenbeck case, the distributional, energy
weak, and variational formulations are equivalent in their respective
natural classes.
\end{corollary}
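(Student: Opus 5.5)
The first two assertions come from Theorem~\ref{thm:intro-general-p} applied with $N=1$, $p=2$ and $f(x,u,\xi)=\frac12|\xi|^2$. This integrand is convex and $C^1$ in $(u,\xi)$. It satisfies \eqref{eq:standard-p-growth} with $\nu=\frac12$, $a\equiv0$ and $L=1$, and \eqref{eq:first-derivative-growth} holds because $D_uf=0$ and $D_\xi f=\xi$. Moreover $p=2\ge 2d/(d+2)$. With these choices the triple \eqref{eq:p-triple} is exactly \eqref{eq:hilbert-triple}, and \eqref{eq:p-data-intro} is exactly \eqref{eq:ou-data-assumption}. Definition~\ref{def:p-weak-intro} then becomes the usual energy weak formulation of \eqref{eq:main-ou-problem}, and \eqref{eq:p-variational-intro} becomes \eqref{eq:main-variational-inequality-intro}. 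Theorem~\ref{thm:intro-general-p} therefore gives existence and uniqueness of both the energy weak and the variational solution, and shows that they coincide.

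The genuinely new part is the claim about distributional solutions. Let $u\in L^2(0,T;H^1_g(\Omega,\gamma_d))$ with $u(\cdot,0)=g(\cdot,0)$ in a suitable sense, and suppose
\[
-\int_0^T\!\!\int_\Omega u\,\partial_t\varphi\,d\gamma_d\,dt+\int_0^T\!\!\int_\Omega Du\cdot D\varphi\,d\gamma_d\,dt=0
\]
for all $\varphi\in C_c^\infty(\Omega_T)$. Since $\gamma_d$ has a smooth positive density on bounded $\Omega$, this is the same as the Lebesgue distributional equation after absorbing the weight. The plan has three steps.

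First, by \eqref{eq:intro-ibp} the map $\varphi\mapsto-\int_\Omega Du\cdot D\varphi\,d\gamma_d$ defines $\divg Du(t)\in V^*$, with norm at most $\|Du(t)\|_{L^2(\gamma_d)}$. Hence $\divg Du\in L^2(0,T;V^*)$.

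Second, the distributional identity says that the weak time derivative of $u$, taken as a $V^*$-valued map (with $H\hookrightarrow V^*$), equals $\divg Du$. To see this, test with products $\varphi(x,t)=\eta(t)\phi(x)$, where $\eta\in C_c^\infty(0,T)$ and $\phi\in C_c^\infty(\Omega)$. Then use density of $C_c^\infty(\Omega)$ in $V=H^1_0(\Omega,\gamma_d)$, which holds because the weight is bounded above and below on $\Omega$, so $V$ coincides with the unweighted $H^1_0(\Omega)$ with equivalent norms. This step is the main technical point. One has to identify the vector-valued weak derivative from scalar tests, which is the standard Lions–Magenes argument (see e.g. Evans or Zeidler).

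Third, once $\partial_tu\in L^2(0,T;V^*)$ and $u-g\in L^2(0,T;V)$ with $\partial_t(u-g)\in L^2(0,T;V^*)$, the Lions–Magenes lemma gives $u\in C([0,T];H)$, so the initial condition makes sense in $H$. The weak equation \eqref{eq:p-weak} then extends by density from tensor-product test functions to all $\phi\in L^2(0,T;V)$. So $u$ is an energy weak solution, and by the uniqueness part it is the unique one.

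Conversely, the energy weak solution is distributional: taking $\phi=\varphi\in C_c^\infty(\Omega_T)$ and integrating by parts in time gives the distributional identity. Combining this with the first paragraph, all three formulations single out the same $u$, and the corollary follows.
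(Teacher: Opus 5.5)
Your proposal is correct and follows essentially the paper's route. The distributional-to-energy-weak step is exactly Proposition~\ref{prop:distributional-energy-ou-fixed}: tensor-product tests, density in $V$, and the identification $\partial_t u=F\in L^2(0,T;V^*)$. The only organizational difference is in how you get existence, uniqueness and coincidence of the energy weak and variational solutions. You specialize Theorem~\ref{thm:intro-general-p} to $p=2$, $f=\frac12|\xi|^2$, and your hypothesis check for that is right. The paper instead re-runs the quadratic case directly in Section~\ref{sec:ou-operator}, using Lions' theorem, convexity plus Lions--Magenes, and the regularized-midpoint uniqueness argument. These are the same ingredients, and the paper itself remarks that the corollary is formally contained in the general theorem.
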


Although Corollary~\ref{cor:intro-main-ou} is formally contained in the
general parabolic theory as far as the energy weak and variational notions
are concerned, we prove the Ornstein--Uhlenbeck case first in
Section~\ref{sec:ou-operator}.  The quadratic setting isolates the two main
ideas---the weak-to-variational argument and the regularized-midpoint
uniqueness argument---without the additional notation of the nonlinear
$p$-growth theory.  Section~\ref{sec:general-operators} then returns to the
full statement of Theorem~\ref{thm:intro-general-p}.

For the obstacle problem, the regularity of the obstacle determines whether one can require a continuous $H$-valued representative.

\begin{theorem}[Ornstein--Uhlenbeck obstacle problem]\label{thm:intro-obstacle}
Assume \eqref{eq:obstacle-sharp-hypothesis}. Then there exists a variational
solution of the obstacle problem in the sense of
Definition~\ref{def:obstacle-solution}. If, in addition,
$\partial_t\psi\in L^2(0,T;H)$, this solution has a representative in
$C([0,T];H)$, the variational inequality holds for every
$\tau\in(0,T]$, and the solution is unique.
\end{theorem}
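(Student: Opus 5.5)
We follow the variational strategy of B\"ogelein--Duzaar--Scheven \cite{BDS2017}, adapted to the Gaussian Hilbert triple \eqref{eq:hilbert-triple}. First we treat \emph{regular data}: $\psi$ and $g$ with additional time regularity, say $\partial_t\psi,\partial_t g\in L^2(0,T;H)$ and $g(\cdot,0)\ge\psi(\cdot,0)$. In this setting we use implicit time discretization. Fix a step $h=T/m$. Starting from $u_0=g(\cdot,0)$, we define $u_j$ as the minimizer of
\[
 w\mapsto \frac12\int_\Omega |Dw|^2\,d\gamma_d+\frac1{2h}\|w-u_{j-1}\|_H^2
\]
over $\{w\in H^1_{g(\cdot,t_j)}(\Omega,\gamma_d): w\ge\psi(\cdot,t_j)\}$. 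This set is closed, convex and nonempty (it contains $g(t_j)$). The functional is strictly convex and coercive on $H^1(\Omega,\gamma_d)$, using the Gaussian Poincar\'e inequality on $H^1_0(\Omega,\gamma_d)$. Hence $u_j$ exists and is unique.

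Next we derive a priori estimates. Comparing $u_j$ with the admissible competitor $g(t_j)+\max\{\psi(t_j)-g(t_j),\, u_{j-1}-g(t_{j-1})\}$, or more simply with $u_{j-1}$ shifted by the time increments of $g$ and $\psi$, gives uniform bounds for the piecewise constant interpolants $u^{(h)}$ in $L^\infty(0,T;H)\cap L^2(0,T;H^1(\Omega,\gamma_d))$. It also gives a bound on the discrete time derivative in $L^2(0,T;H)$; this last bound is where $\partial_t\psi\in L^2(H)$ is used at the regular level. From this we get compactness in $L^2(\Omega_T,\gamma_d)$ via an Aubin--Lions type argument for the piecewise linear interpolants, together with weak convergence of gradients.

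We then pass to the limit. The discrete minimality, tested against time-discretizations of an arbitrary comparison map $v\in K_{\psi,g}(\Omega_\tau)$ with $\partial_tv\in L^2(V^*)$, yields a discrete version of \eqref{eq:intro-obstacle-variational}. Weak lower semicontinuity of the Dirichlet energy and of $\|(v-u)(\tau)\|_H^2$ then gives the inequality for a.e.\ $\tau$. Constraint $u\ge\psi$ and boundary values are preserved under weak limits.

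For \emph{general data} under \eqref{eq:obstacle-sharp-hypothesis}, we approximate $\psi,g$ by regularized $\psi_k,g_k$ with $g_k\ge\psi_k$. One option is time mollification with $\psi_k:=\min\{\psi_k^{\mathrm{moll}},g_k\}$. We obtain solutions $u_k$ and derive $k$-uniform energy bounds directly from the variational inequality, choosing $v=g_k$, which is admissible. Passing to weak limits gives existence. The main obstacle is this step. Without control on $\partial_t\psi$, the comparison maps $v\in K_{\psi,g}$ must be approximated by admissible maps in $K_{\psi_k,g_k}$ having time derivatives in $V^*$, and we only get the inequality for a.e.\ $\tau$ without continuity in $H$. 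Our first attempt is $v_k:=v+(\psi_k-\psi)_+ +(g_k-g)$ suitably truncated, combined with the exponential time mollification $[\![\cdot]\!]_h$ as in \cite{BDS2017}.

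Under $\partial_t\psi\in L^2(0,T;H)$, we show continuity and uniqueness. For continuity, the mollification $v=[\![u]\!]_h+(\text{correction})$ is admissible: $[\![u]\!]_h\ge[\![\psi]\!]_h$, and $[\![\psi]\!]_h-\psi\to0$ with controlled time derivative. Inserting it into the inequality gives $\|u(\tau)-[\![u]\!]_h(\tau)\|_H\to0$ uniformly in $\tau$, hence $u\in C([0,T];H)$, and the inequality then extends to every $\tau$ by continuity. For uniqueness, given two solutions $u_1,u_2$, we use the regularized midpoint: test each inequality with $v=[\![\tfrac12(u_1+u_2)]\!]_h$ (plus the obstacle correction), which is admissible by convexity of $K_{\psi,g}$. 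Adding the two inequalities and using the identity
\[
\tfrac12|Du_1|^2+\tfrac12|Du_2|^2-|D\tfrac{u_1+u_2}2|^2=\tfrac14|D(u_1-u_2)|^2,
\]
we let $h\to0$. The time-derivative terms become nonpositive up to vanishing errors, and we obtain $\tfrac14\|(u_1-u_2)(\tau)\|_H^2\le0$ for every $\tau$, hence $u_1=u_2$.
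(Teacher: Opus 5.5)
Your second half matches the paper. Upgrading $u$ to $C([0,T];H)$ with an obstacle-corrected exponential mollification of $u$, then running the regularized midpoint argument with $\frac12|Du_1|^2+\frac12|Du_2|^2-|D\frac{u_1+u_2}2|^2=\frac14|D(u_1-u_2)|^2$, is essentially what it does. The gap is the existence step under \eqref{eq:obstacle-sharp-hypothesis}. You name it as the main obstacle but do not carry it out, and both devices you propose fail.

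\emph{Comparison maps.} The maps $v_k=v+(\psi_k-\psi)_++(g_k-g)$ have no time derivative when $\partial_t\psi$ is unavailable, because $(\psi_k-\psi)_+$ inherits the time irregularity of $\psi$. They also need not satisfy $v_k\ge\psi_k$ once $g_k-g$ is added. Finally, $v_k-g_k=v-g+(\psi_k-\psi)_+$ need not have zero lateral trace.

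\emph{Approximate data.} The choice $\psi_k=\min\{\psi_k^{\mathrm{moll}},g_k\}$ does not give data your regular step can handle. Your claimed $L^2(0,T;H)$ bound on the discrete time derivative comes from testing with $u_{j-1}+\psi_j-\psi_{j-1}$. That test produces the cross term $\int_\Omega Du_{j-1}\cdot D(\psi_j-\psi_{j-1})\,d\gamma_d$ and the initial energy $\frac12\int_\Omega|Dg(\cdot,0)|^2\,d\gamma_d$. So it requires $\partial_tD\psi\in L^2$, $g(\cdot,0)\in H^1(\Omega,\gamma_d)$ and $\psi-g\in L^2(0,T;H^1_0(\Omega,\gamma_d))$, not merely $\partial_t\psi\in L^2(0,T;H)$. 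By contrast, $\partial_t\min\{\psi_k^{\mathrm{moll}},g_k\}$ generally jumps across the contact set and has no spatial gradient in $L^2$, and $g_o$ lies only in $H$.

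The missing idea is to regularize the data \emph{and} the comparison maps with the same exponential mollifier and the same seed. Take $g_j=[g]_{h_j}^{g_{o,j}}$, $\psi_j=[\psi]_{h_j}^{g_{o,j}}$ and $v_j=[v]_{h_j}^{g_{o,j}}$, with $g_{o,j}=g_o*\varphi_{\varepsilon_j}$ and $\varepsilon_j=h_j^{1/4}$.
\begin{itemize}
\item Order preservation and the common seed give $g_j-\psi_j=[g-\psi]_{h_j}^{0}\ge0$ and $v_j\ge\psi_j$ with no truncation.
\item $\partial_t\psi_j=-\frac1{h_j}(\psi_j-\psi)\in L^2(0,T;H^1(\Omega,\gamma_d))$. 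After the boundary cutoff $\zeta_\varepsilon\psi+(1-\zeta_\varepsilon)g$, controlled by a boundary-strip Poincar\'e inequality, the regular theory applies for each $j$.
\item The price is the initial layer $g_j-[g]_{h_j}^{g_o}=e^{-t/h_j}(g_{o,j}-g_o)$. Its time derivative is bounded in $L^1(0,T;H)$ but not in $L^2(0,T;H)$. Hence the uniform bound from $v=g_j$ must come from the refined estimate \eqref{eq:refined-energy}, which uses $\int_0^T\|\partial_tv\|_H\,dt$.
\item Because $Dg_{o,j}$ blows up like $\varepsilon_j^{-1}$, the scales must be coupled so that $h_j\varepsilon_j^{-2}\to0$.
\item In the limit, the layer $v_j-[v]_{h_j}^{v(\cdot,0)}=e^{-t/h_j}(g_{o,j}-v(\cdot,0))$ produces exactly the initial term $\frac12\|v(\cdot,0)-g_o\|_H^2$ in \eqref{eq:obstacle-variational-inequality}.
\end{itemize}
Without this construction, or an equivalent one, your argument does not establish existence under \eqref{eq:obstacle-sharp-hypothesis}.
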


Finally, in the kinetic setting we obtain the corresponding equivalence for the linear equation.

\begin{proposition}[Linear kinetic Fokker--Planck equation]\label{prop:intro-kinetic}
For every
\[
 u_0\in L^2(\mathbb T^d\times\mathbb R^d,dx\,d\gamma_d(v)),
\]
the energy weak solution of \eqref{eq:intro-kinetic-equation} is the unique
variational solution in the kinetic comparison class introduced in
Section~\ref{sec:kinetic}. In particular, the weak and variational
formulations coincide.
\end{proposition}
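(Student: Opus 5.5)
The proof splits into three steps: existence of an energy weak solution, weak implies variational, and uniqueness among variational solutions. I will work in the kinetic Hilbert setting $H:=L^2(\mathbb T^d\times\mathbb R^d,dx\,d\gamma_d(v))$ and $V:=L^2(\mathbb T^d_x;H^1(\mathbb R^d_v,\gamma_d))$. The energy class is $u\in L^2(0,T;V)$ with $Yu\in L^2(0,T;L^2(\mathbb T^d_x;H^{-1}(\mathbb R^d_v,\gamma_d)))$, and the weak formulation reads
\[
\int_0^T\pair{Yu}{\phi}\,dt+\int_0^T\int\!\!\int D_vu\cdot D_v\phi\,d\gamma_d(v)\,dx\,dt=0
\]
for all $\phi\in L^2(0,T;V)$. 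The comparison class consists of maps $w$ in the same energy class. The variational inequality is \eqref{eq:main-variational-inequality-intro} with $\partial_tv$ replaced by $Yw$ and $Dv$ replaced by $D_vw$.

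\textbf{Existence and the energy identity.} I take existence and uniqueness of the energy weak solution from the kinetic Cauchy theory of \cite{AAMN}, after conjugating by the Gaussian weight if that is needed to match their setting. The essential tool from that theory is the kinetic chain rule: for $w$ in the energy class, $t\mapsto\|w(t)\|_H^2$ is absolutely continuous, $w\in C([0,T];H)$, and
\[
\frac{d}{dt}\tfrac12\|w\|_H^2=\pair{Yw}{w}.
\]
This works because $v\cdot D_x$ is skew with respect to $dx\,d\gamma_d(v)$: the Gaussian factor is independent of $x$ and the $x$-variable is periodic. I expect this chain rule (the kinetic analogue of Lions--Magenes) to be the main obstacle. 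Its proof needs a density argument by smooth functions in the kinetic energy class, with regularization in $(t,x)$ along characteristics. I would rely on \cite{AAMN} for it rather than reprove it.

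\textbf{Weak implies variational.} Let $u$ be the weak solution and $w$ a comparison map. Since the comparison class is linear, $w-u$ lies in the energy class, so the chain rule applies and gives
\[
\tfrac12\|(w-u)(\tau)\|_H^2-\tfrac12\|w(0)-u_0\|_H^2=\int_0^\tau\pair{Yw-Yu}{w-u}\,dt.
\]
Test the weak equation with $\phi=(w-u)\mathbf 1_{(0,\tau)}$ to get $-\int_0^\tau\pair{Yu}{w-u}\,dt=\int_0^\tau\!\!\int D_vu\cdot D_v(w-u)$. Convexity gives $D_vu\cdot D_v(w-u)\le\tfrac12|D_vw|^2-\tfrac12|D_vu|^2$, and combining these yields the variational inequality.

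\textbf{Uniqueness of variational solutions.} Let $u$ be any variational solution. I would use the regularized-midpoint argument of Section~\ref{sec:ou-operator}. Take the kinetic exponential time mollification
\[
[u]_h(t)=e^{-t/h}u_0^{\sharp}+\tfrac1h\int_0^te^{(s-t)/h}\,\mathcal T_{t-s}u(s)\,ds,
\]
where $\mathcal T_r$ is free transport, $\mathcal T_rf(x,v)=f(x-rv,v)$. This choice gives $Y[u]_h=\tfrac1h(u-[u]_h)\in L^2(0,T;V)$, and since $\mathcal T_r$ preserves the measure and commutes with $D_v$ up to a term $rD_x$, I expect the mollification to stay admissible. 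Then insert the midpoint comparison $w=\tfrac12([u]_h+\bar u)$, where $\bar u$ is the weak solution. The term $\pair{Y[u]_h}{[u]_h-u}\le0$, together with the weak equation for $\bar u$ and the strict convexity of $\tfrac12|\xi|^2$, yields after $h\to0$ the bound $\tfrac18\|(u-\bar u)(\tau)\|_H^2\le0$.

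The admissibility of $[u]_h$ needs care. Free transport does not commute with $D_v$: $D_v(\mathcal T_rf)=\mathcal T_r(D_vf)-r\,\mathcal T_r(D_xf)$. So I may instead mollify only in the direction of $Y$ and assume the comparison class is defined up to the corresponding closure. If that fails, I would fall back on proving variational $\Rightarrow$ weak directly, by testing with $w=u\pm\varepsilon\phi$ along mollified $u$, and then use uniqueness of weak solutions from \cite{AAMN}.
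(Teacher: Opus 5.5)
\textbf{Gap in the uniqueness step.} Your existence step and your weak-to-variational step are correct and match the paper. You test with $w-u$ on $(0,\tau)$, apply the kinetic Green identity to $w-u\in H^1_{\mathrm{kin}}(Q_\tau)$, and use convexity of $\tfrac12|\xi|^2$. The uniqueness step, however, does not close.

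The free-transport mollification $[u]_h$ is not known to be an admissible comparison map. As you note yourself, $D_v(\mathcal T_r f)=\mathcal T_r(D_vf)-r\,\mathcal T_r(D_xf)$. A variational solution is only assumed to lie in $L^2(0,T;V)\cap C([0,T];H)$, so nothing controls $D_xu$. There is therefore no reason for $[u]_h\in L^2(0,\tau;V)$, let alone for $[u]_h\to u$ in $L^2(0,\tau;V)$.

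Your fallbacks do not fix this:
\begin{itemize}
\item ``Mollifying only in the direction of $Y$'' is exactly what $[u]_h$ already does.
\item Enlarging the comparison class to a closure changes the definition you are asked to prove things about.
\item Testing with $u\pm\varepsilon\phi$ along a mollified $u$ needs the same admissible regularization of $u$, because $Yu\in L^2(0,\tau;V^*)$ is not available a priori. It inherits the same obstruction.
\end{itemize}

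\textbf{The missing observation.} No regularization of $u$ is needed at all. The energy weak solution $\bar u$ from \cite{AAMN} belongs to $H^1_{\mathrm{kin}}(Q_\tau)$, so it is itself an admissible comparison map. Your midpoint $\tfrac12([u]_h+\bar u)$ is superfluous once one member of the pair is admissible.

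Take $w=\bar u$ in the variational inequality for $u$. The initial term vanishes because $\bar u(0)=u_0$. Since $Y\bar u=-A\bar u$ in $L^2(0,\tau;V^*)$ and $\bar u-u\in L^2(0,\tau;V)$,
\[
 \int_0^\tau\pair{Y\bar u}{\bar u-u}_V\,dt+\frac12\int_0^\tau\int\bigl(|D_v\bar u|^2-|D_vu|^2\bigr)\,d\mu\,dt
 =-\frac12\int_0^\tau\int|D_v(\bar u-u)|^2\,d\mu\,dt .
\]
Hence, for every $\tau\in(0,T]$,
\[
 \tfrac12\|(\bar u-u)(\tau)\|_H^2+\tfrac12\int_0^\tau\int|D_v(\bar u-u)|^2\,d\mu\,dt\le0,
\]
so $u=\bar u$. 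This is exactly the paper's argument. It also shows that every variational solution lies in $H^1_{\mathrm{kin}}(Q_T)$.

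The same choice $w=\bar u$ works whenever a weak solution exists and the integrand is convex. In that case the left-hand side is minus the integrated first-order Taylor remainder of $f$, which is nonpositive.

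\textbf{On the chain rule.} You flagged this as the main obstacle, but the paper does not cite it from \cite{AAMN}; it proves it directly by Fourier truncation in $x$. For finitely many modes, the Gaussian moment bound $\|vq\|_{L^2(\gamma_d)}^2\le 2d\|q\|^2+4\|D_vq\|^2$ puts $v\cdot D_xu_n$ in $L^2(0,\tau;H)$. The ordinary Lions--Magenes identity then applies to $u_n$, and one passes to the limit.
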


\subsection{Outline of the paper}\label{subsec:intro-outline}

Section~\ref{sec:preliminaries} collects the Gaussian Sobolev framework, the
weighted divergence and integration-by-parts formula, the relevant evolution
triples, and the time regularization used in the uniqueness argument. Section~\ref{sec:ou-operator} contains the proof of the equivalence of distributional, energy weak, and variational solutions for the Ornstein-Uhlenbeck equation. Section~\ref{sec:obstacle} is devoted to the corresponding parabolic obstacle problem. Section~\ref{sec:general-operators} treats nonlinear equations in Gaussian divergence form associated with convex energies of standard $p$-growth and discusses representative examples. Finally, Section~\ref{sec:kinetic} presents the kinetic Fokker-Planck equation in mixed Gaussian variables.

\section{Preliminaries and notation}\label{sec:preliminaries}

In this section, we first introduce the Gaussian Sobolev spaces and integration-by-parts
 identities, and then explain the time traces
 and regularization needed for the comparison formulation. We also record
 the Ornstein--Uhlenbeck identities used in Section~\ref{sec:ou-operator}.
 The kinetic spaces are introduced separately in
 Section~\ref{sec:kinetic}.


We recall that $d\ge 1$, $T>0$, $\Omega_T:=\Omega\times(0,T)$, and
$D$ denotes the weak gradient with respect to the spatial variable $x\in\mathbb R^d$. Moreover, we write
\[
  d\gamma_d(x)=\rho_d(x)\,dx,
  \qquad
  \rho_d(x):=(2\pi)^{-d/2}e^{-|x|^2/2},
\]
for the standard Gaussian measure on $\mathbb R^d$. For matrix fields, both $\cdot$ and $:$
 denote the Frobenius product. The notation
 $\langle\ell,\phi\rangle_{V_p}$ denotes the pairing of
 $\ell\in V_p^*$ with $\phi\in V_p$, and similarly for $V$ and $Z$.

\subsection{Gaussian Sobolev spaces}\label{subsec:weighted-spaces}
\subsection{The whole space}
Following \anna{\cite{LunardiMetafunePallara2020}}, we recall the definition of Gaussian Sobolev spaces in the finite-dimensional setting. For $1\le p<\infty$, the Lebesgue spaces with respect to the Gaussian measure are defined as follows
\[
  L^p(\mathbb R^d,\gamma_d)
  :=\left\{u:\mathbb R^d\to\mathbb R\text{ measurable}:
  \int_{\mathbb R^d}|u|^p\,d\gamma_d<\infty\right\}.
\]
Let $C_b^1(\mathbb R^d)$ be the space of bounded $C^1$ functions with bounded first
order derivatives. The gradient operator
\[
  D:C_b^1(\mathbb R^d)\subset L^p(\mathbb R^d,\gamma_d)
  \to L^p(\mathbb R^d,\gamma_d;\mathbb R^d)
\]
is closable. Its closure has domain
$W^{1,p}(\mathbb R^d,\gamma_d)\subset L^p(\mathbb R^d,\gamma_d)$
and takes values in $L^p(\mathbb R^d,\gamma_d;\mathbb R^d)$.
Equivalently, $u\in W^{1,p}(\mathbb R^d,\gamma_d)$ if there are functions $u_j\in C_b^1(\mathbb R^d)$ such that
\[
  u_j\to u \quad\text{in }L^p(\mathbb R^d,\gamma_d),
  \qquad
  D u_j\to G \quad\text{in }L^p(\mathbb R^d,\gamma_d;\mathbb R^d),
\]
and in this case $Du:=G$. We recall that $W^{1,p}(\mathbb R^d,\gamma_d)$ is a Banach space with the graph norm
\begin{align*}
  \|u\|_{W^{1,p}(\mathbb R^d,\gamma_d)}
  :&=\|u\|_{L^p(\mathbb R^d,\gamma_d)}
  +\|Du\|_{L^p(\mathbb R^d,\gamma_d;\mathbb R^d)}\\
  &=\left( \int_{\R^d} |u|^p d \gamma_d\right)^{1/p}+ \left( \int_{\R^d}|D u|^p d \gamma_d\right)^{1/p}.
\end{align*}
For $p=2$ this is a Hilbert space; we write
$H^1(\mathbb R^d,\gamma_d):=W^{1,2}(\mathbb R^d,\gamma_d)$. All definitions extend componentwise to vector-valued maps.

For $u,\phi\in C_b^1(\mathbb R^d)$ and $i=1,\ldots,d$, the Gaussian integration-by-parts formula reads
\[
  \int_{\mathbb R^d} u\,\frac{\partial\phi}{\partial x_i}\,d\gamma_d
  =-\int_{\mathbb R^d}\phi\,\frac{\partial u}{\partial x_i}\,d\gamma_d
  +\int_{\mathbb R^d}x_i u\phi\,d\gamma_d .
\]
The weak derivatives admit the following equivalent characterization. For $u\in L^p(\mathbb R^d,\gamma_d)$ a function
$F_i\in L^p(\mathbb R^d,\gamma_d)$ is the $i$-th derivative of $u$ if
\[
  \int_{\mathbb R^d} u\,\frac{\partial \phi}{\partial x_i}\,d\gamma_d
  =-\int_{\mathbb R^d}F_i\phi\,d\gamma_d
  +\int_{\mathbb R^d}x_i u\phi\,d\gamma_d,
  \qquad \phi\in C_c^\infty(\mathbb R^d).
\]
In this case we write $D_i u:=F_i$. Equivalently,
\[
  W^{1,p}(\mathbb R^d,\gamma_d)
  =\left\{u\in W^{1,p}_{\rm loc}(\mathbb R^d):
  u,D_1u,\ldots,D_du\in L^p(\mathbb R^d,\gamma_d)\right\}.
\]

 \subsection{Bounded domains}
On the bounded Lipschitz domain $\Omega$, the density
$\rho_d$ is bounded above and bounded away from zero. Hence there are
constants $0<m_\Omega\leq M_\Omega<\infty$ such that
\begin{equation}\label{eq:norm-equivalence}
     m_\Omega\int_\Omega |u|^p\,dx
  \le \int_\Omega |u|^p\,d\gamma_d
  \le M_\Omega\int_\Omega |u|^p\,dx .
\end{equation}
 
We therefore set
\[
  W^{1,p}(\Omega,\gamma_d)
  :=\left\{u\in L^p(\Omega,\gamma_d):D_i u\in L^p(\Omega,\gamma_d),
  \ i=1,\ldots,d\right\},
\]
where $D_i u$ denotes the usual distributional derivative in $\Omega$, and equip this space with its Gaussian-weighted norm.
We also set
\[
  W^{1,p}_0(\Omega,\gamma_d):=\overline{C_c^\infty(\Omega)}^{\,W^{1,p}(\Omega,\gamma_d)},
  \qquad
  H^1_0(\Omega,\gamma_d):=W^{1,2}_0(\Omega,\gamma_d).
\]
For a datum $g$, we use the affine spaces
 $W^{1,p}_g(\Omega,\gamma_d;\mathbb R^N)$ introduced in
 Section~\ref{subsec:intro-gaussian}. In particular,
 $H^1_g:=W^{1,2}_g$ and $H^1_0:=W^{1,2}_0$.
For $1<p<\infty$ and $p'=p/(p-1)$, we write
\[
  W^{-1,p'}(\Omega,\gamma_d;\mathbb R^N)
  :=\bigl(W^{1,p}_0(\Omega,\gamma_d;\mathbb R^N)\bigr)^*.
\]

\begin{proposition}\label{propembedding}
Let $\Omega\subset\mathbb R^d$ be bounded and Lipschitz and let
$1\le p<\infty$. Then:

\begin{enumerate}[label=\textup{(\roman*)}]
\item the usual trace operator is continuous on
$W^{1,p}(\Omega,\gamma_d)$, and
$W^{1,p}_0(\Omega,\gamma_d)$ is the subspace of functions with zero trace;

\item the embedding
\[
W^{1,p}(\Omega,\gamma_d)
\hookrightarrow L^p(\Omega,\gamma_d)
\]
is compact;

\item the weighted Poincar\'e inequality holds: there is a constant $C_{p,\Omega}>0$ such that
\begin{equation}\label{eq:weighted-poincare}
 \norm{u}_{L^p(\Omega,\gamma_d)}\le C_{p,\Omega}\norm{Du}_{L^p(\Omega,\gamma_d)}
 \qquad\text{for every }u\in W^{1,p}_0(\Omega,\gamma_d).
\end{equation}

\item if $1<p<\infty$ and
\[
p\ge \frac{2d}{d+2},
\]
then
\[
W^{1,p}_0(\Omega,\gamma_d;\mathbb R^N)
\hookrightarrow
L^2(\Omega,\gamma_d;\mathbb R^N)
\]
continuously and densely. If
\[
p>\frac{2d}{d+2},
\]
the embedding is compact.
\end{enumerate}
\end{proposition}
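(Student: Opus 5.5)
The plan is to transfer everything to the unweighted setting through the norm equivalence \eqref{eq:norm-equivalence}. Since $\Omega$ is bounded, $\rho_d$ is bounded above and below on $\overline\Omega$ by positive constants. Applying \eqref{eq:norm-equivalence} to $u$ and to each $D_iu$ shows that $W^{1,p}(\Omega,\gamma_d)$ and the classical $W^{1,p}(\Omega)$ coincide as sets, with equivalent norms. The closures of $C_c^\infty(\Omega)$ in the two norms therefore also agree, so $W^{1,p}_0(\Omega,\gamma_d)=W^{1,p}_0(\Omega)$ with equivalent norms. The same holds for $L^p(\Omega,\gamma_d)=L^p(\Omega)$ and $L^2(\Omega,\gamma_d)=L^2(\Omega)$. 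All four statements are invariant under replacing a norm by an equivalent one: continuity, compactness, Poincaré inequalities up to a change of constant, and density. It therefore suffices to quote the classical Lebesgue-measure results on bounded Lipschitz domains and transport them back, componentwise in the $\mathbb R^N$-valued case.

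For (i), the classical trace theorem gives a continuous trace $W^{1,p}(\Omega)\to L^p(\partial\Omega)$. The standard characterization, valid on Lipschitz domains for $1\le p<\infty$, identifies $W^{1,p}_0(\Omega)$ with the kernel of the trace; composing with the identity isomorphism gives (i). For (ii), Rellich--Kondrachov gives the compact embedding $W^{1,p}(\Omega)\hookrightarrow L^p(\Omega)$ for bounded Lipschitz $\Omega$ and every $1\le p<\infty$. Here Lipschitz regularity provides the extension operator, and for $p\ge d$ compactness into $L^p$ still holds because $L^p\subset L^q$ for some $q$ below the critical range, or directly via Kolmogorov--Riesz. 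For (iii), the classical Poincaré inequality on $W^{1,p}_0(\Omega)$ for bounded $\Omega$ is used, and \eqref{eq:norm-equivalence} changes the constant by at most $(M_\Omega/m_\Omega)^{1/p}$.

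For (iv), the continuous embedding follows from Sobolev. If $p<d$, then $W^{1,p}_0(\Omega)\hookrightarrow L^{p^*}(\Omega)$ with $p^*=dp/(d-p)$. The condition $p\ge 2d/(d+2)$ is exactly $p^*\ge2$, so boundedness of $\Omega$ gives $L^{p^*}\hookrightarrow L^2$. If $p\ge d$, then $W^{1,p}_0$ embeds into every $L^q$ with $q<\infty$. When $p>2d/(d+2)$ with $p<d$, we have $p^*>2$, and Rellich--Kondrachov gives compactness into $L^q$ for every $q<p^*$, in particular $q=2$; for $p\ge d$ compactness into $L^2$ is likewise standard. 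Density holds because $C_c^\infty(\Omega;\mathbb R^N)\subset W^{1,p}_0$ is dense in $L^2(\Omega;\mathbb R^N)$.

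The main point needing care is the borderline case $d=1$ or $d=2$ combined with $p=1$ in parts (i)--(iii), together with the use of Lipschitz regularity for the trace characterization in (i). For these I would simply invoke the standard references; they hold on bounded Lipschitz domains for all $1\le p<\infty$, so no genuine obstacle is expected beyond bookkeeping of constants.
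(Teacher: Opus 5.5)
Your argument is correct and is essentially the paper's route. The paper states the proposition without a separate proof, right after the norm equivalence \eqref{eq:norm-equivalence}. That equivalence identifies $W^{1,p}(\Omega,\gamma_d)$, $W^{1,p}_0(\Omega,\gamma_d)$ and $L^q(\Omega,\gamma_d)$ with their unweighted counterparts, so (i)--(iv) reduce to the classical trace, Rellich--Kondrachov, Poincar\'e and Sobolev results on bounded Lipschitz domains, exactly as you do. One small point: in (ii) for $p\ge d$, the cleaner phrasing is $W^{1,p}(\Omega)\subset W^{1,q}(\Omega)$ for some $q<d$ with $q^*>p$, and for $d=1$ you do need the Kolmogorov--Riesz (or Arzel\`a--Ascoli) alternative you mention.
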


\subsection{Gaussian divergence and the Dirichlet realization}
The Gaussian divergence is the negative Hilbert-space
 adjoint of the closed gradient. Let
\[
  D:H^1(\mathbb R^d,\gamma_d)\subset L^2(\mathbb R^d,\gamma_d)
  \longrightarrow L^2(\mathbb R^d,\gamma_d;\mathbb R^d),
  \qquad Tu=Du,
\]
be the closed gradient operator. A vector field
$a\in L^2(\mathbb R^d,\gamma_d;\mathbb R^d)$ belongs to the domain of the
Gaussian divergence if there exists $h\in L^2(\mathbb R^d,\gamma_d)$ such
that
\begin{equation}\label{eq:gaussian-divergence-adjoint}
  \int_{\mathbb R^d}Du\cdot a\,d\gamma_d
  =-\int_{\mathbb R^d}u h\,d\gamma_d
  \qquad\text{for every }u\in H^1(\mathbb R^d,\gamma_d).
\end{equation}
In this case we set $\divg a:=h$. Equivalently,
$\divg=-D^*$ on its domain. If $a\in C_c^1(\mathbb R^d;\mathbb R^d)$,
then 
\begin{equation}\label{eq:pointwise-gaussian-divergence}
  \divg a=\operatorname{div}a-x\cdot a
  =\sum_{i=1}^d(\partial_i a_i-x_i a_i).
\end{equation}
In particular, for smooth $u$ for which the expression is defined,
\[
  \divg Du=\Delta u-x\cdot Du,
\]
which is the Ornstein--Uhlenbeck operator in Gaussian divergence form.

For the weak formulations on the bounded domain $\Omega$, we use the
corresponding zero-trace dual extension. Let $1<p<\infty$ and
$a\in L^{p'}(\Omega,\gamma_d;\mathbb R^d)$. We define
$\divg a\in W^{-1,p'}(\Omega,\gamma_d)$ by
\begin{equation}\label{eq:gaussian-ibp-zero-trace}
  \langle\divg a,\phi\rangle_{W^{1,p}_0}
  :=-\int_\Omega a\cdot D\phi\,d\gamma_d,
  \qquad \phi\in W^{1,p}_0(\Omega,\gamma_d).
\end{equation}
This is well defined by H\"older's inequality. If $a$ is smooth, then for
$\phi\in C_c^\infty(\Omega)$ the usual integration-by-parts formula gives
\[
  -\int_\Omega a\cdot D\phi\,d\gamma_d
  =\int_\Omega(\operatorname{div}a-x\cdot a)\phi\,d\gamma_d,
\]
so the dual definition agrees with the pointwise expression
\eqref{eq:pointwise-gaussian-divergence}.

For a matrix field $A=(A_i^\alpha)_{\alpha=1,\ldots,N}^{i=1,\ldots,d}$,
we use the row-wise convention
\[
  \bigl(\divg A\bigr)^\alpha
  :=\sum_{i=1}^d\bigl(\partial_iA_i^\alpha-x_iA_i^\alpha\bigr)
\]
whenever $A$ is smooth. More generally, if
$A\in L^{p'}(\Omega,\gamma_d;\mathbb R^{N\times d})$, we define
\[
  \divg A\in W^{-1,p'}(\Omega,\gamma_d;\mathbb R^N)
\]
by
\begin{equation}\label{eq:matrix-gaussian-divergence-dual}
  \langle\divg A,\phi\rangle
  :=-\int_\Omega A:D\phi\,d\gamma_d,
  \qquad
  \phi\in W^{1,p}_0(\Omega,\gamma_d;\mathbb R^N).
\end{equation}

For $p=2$ we use the Hilbert triple \eqref{eq:hilbert-triple},
 with componentwise spaces for vector-valued maps. The Gaussian Dirichlet form is
\[
  \mathfrak a_{\gamma_d}(u,\phi)
  :=\int_\Omega Du:D\phi\,d\gamma_d,
  \qquad u,\phi\in V.
\]
By Proposition~\ref{propembedding}, this form is continuous and coercive on
$V$. We denote by $A_{\gamma_d}:V\to V^*$ the associated operator, namely
\begin{equation}\label{eq:quadratic-dirichlet-operator}
  \pair{A_{\gamma_d}u}{\phi}_{V}
  :=\int_\Omega Du:D\phi\,d\gamma_d,
  \qquad u,\phi\in V.
\end{equation}
Thus $A_{\gamma_d}u=-\divg Du$ in $V^*$. We refer to
$A_{\gamma_d}$ as the variational Dirichlet realization of
$-\divg D$.

\subsection{Evolution triples}
Let $V_p$, $H$, and $V_p^*$ be as in \eqref{eq:p-triple}.
 For $1<p<\infty$ with $p\geq2d/(d+2)$,
 Proposition~\ref{propembedding} gives
\[
  V_p\hookrightarrow H\equiv H^*\hookrightarrow V_p^*.
\]
For $p=2$ this is the Hilbert triple \eqref{eq:hilbert-triple}.

\begin{proposition}[Evolution-space continuity and energy identity]
{\color{black}Let $1<p<\infty$ with $p\geq2d/(d+2)$.}  If
\[
  w\in L^p(0,T;V_p),
  \qquad
  \partial_tw\in L^{p'}(0,T;V_p^*),
\]
then $w$ has a unique representative in $C([0,T];H)$ and
\begin{equation}\label{eq:lions-magenes}
  \int_s^t\pair{\partial_rw}{w}_{V_p}\,dr
  =\frac12\|w(\cdot,t)\|_H^2-\frac12\|w(\cdot,s)\|_H^2
\end{equation}
for every $0\leq s\leq t\leq T$.
\end{proposition}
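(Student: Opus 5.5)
The plan is to reduce the statement to the classical Lions--Magenes lemma for the unweighted evolution triple and then to transfer it. By \eqref{eq:norm-equivalence}, on the bounded domain $\Omega$ the Gaussian norms on $L^2(\Omega,\gamma_d)$ and $W^{1,p}(\Omega,\gamma_d)$ are equivalent to the Lebesgue ones. Hence $V_p=W^{1,p}_0(\Omega,\gamma_d;\mathbb R^N)$ and $W^{1,p}_0(\Omega;\mathbb R^N)$ coincide as sets with equivalent norms. Only the Hilbert structure on $H$ changes, since it is given by the weighted inner product $(u,\phi)_H=\int_\Omega u\cdot\phi\,d\gamma_d$. The identification $H\equiv H^*$, and therefore the embedding $H\hookrightarrow V_p^*$, must be made with respect to this weighted product. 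I would therefore argue directly with the abstract triple $V_p\hookrightarrow H\hookrightarrow V_p^*$ supplied by Proposition~\ref{propembedding}(iv), where density gives injectivity of $H\hookrightarrow V_p^*$. This avoids translating between the two pairings.

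\textbf{Step 1 (smooth case).} For $w\in C^1([0,T];V_p)$ the scalar map $t\mapsto\frac12\|w(t)\|_H^2$ is $C^1$, with derivative $(\partial_tw,w)_H=\pair{\partial_tw}{w}_{V_p}$; the equality uses the definition of the embedding $H\hookrightarrow V_p^*$. Integration then gives \eqref{eq:lions-magenes}. In addition, for $0\le s\le t\le T$ one gets the pointwise bound
\[
\|w(t)\|_H^2\le \frac1T\int_0^T\|w\|_H^2\,dr+2\int_0^T|\pair{\partial_rw}{w}_{V_p}|\,dr ,
\]
obtained by averaging the identity in $s$. Together with the continuous embedding $V_p\hookrightarrow H$ this yields
\[
\sup_t\|w(t)\|_H\le C\bigl(\|w\|_{L^p(0,T;V_p)}+\|\partial_tw\|_{L^{p'}(0,T;V_p^*)}\bigr).
\]

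\textbf{Step 2 (density).} I would show that $C^1([0,T];V_p)$ is dense in
\[
W:=\{w\in L^p(0,T;V_p):\partial_tw\in L^{p'}(0,T;V_p^*)\}
\]
with its graph norm. The approach is to extend $w$ past $0$ and $T$ by reflection, which preserves membership in $W$. One then mollifies in time: this commutes with $\partial_t$, and it converges in $L^p(0,T;V_p)$ and, after differentiation, in $L^{p'}(0,T;V_p^*)$. I expect this to be the main technical point. The reflection has to be checked carefully, so that the distributional time derivative produces no jump terms at the endpoints. If this is awkward, a fallback is a cutoff decomposition $w=\eta w+(1-\eta)w$, reflecting each piece only near its own endpoint.

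\textbf{Step 3 (conclusion).} Take smooth approximants $w_k\to w$ in $W$. By the estimate of Step 1 applied to $w_k-w_j$, the sequence is Cauchy in $C([0,T];H)$. Its limit agrees a.e. with $w$, which gives the continuous representative. Uniqueness is automatic, since two continuous functions equal a.e. are equal. Finally, pass to the limit in \eqref{eq:lions-magenes} for $w_k$. The left-hand side converges by the $L^{p'}$--$L^p$ duality pairing, and the right-hand side converges by uniform convergence in $H$. This gives the identity for all $0\le s\le t\le T$.
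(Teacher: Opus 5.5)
Your argument is the standard proof of the Lions--Magenes lemma: the smooth case, then density of $C^1([0,T];V_p)$ via reflection and time mollification, then a uniform Cauchy estimate. The paper does not prove this proposition; it simply cites the lemma from Lions and Brezis. Your reflection step is fine, because $w\in W^{1,1}(0,T;V_p^*)$ is absolutely continuous into $V_p^*$, so no jump terms arise at the endpoints.

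One step in Step 1 needs repair. Since $2d/(d+2)<2$, the statement covers $p<2$. In that case $\frac1T\int_0^T\|w\|_H^2\,dr$ is \emph{not} bounded by $C\|w\|_{L^p(0,T;V_p)}^2$; a tall, narrow bump in time is a counterexample. So your ``average the identity in $s$'' step does not give the claimed sup bound. Instead, pick $s$ with $\|w(s)\|_H\le T^{-1/p}\|w\|_{L^p(0,T;H)}$. This yields
\[
 \sup_t\|w(t)\|_H^2\le T^{-2/p}\|w\|_{L^p(0,T;H)}^2+2\|\partial_tw\|_{L^{p'}(0,T;V_p^*)}\|w\|_{L^p(0,T;V_p)},
\]
which is exactly the Cauchy estimate you need in Step 3.
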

This is the standard Lions-Magenes lemma for an evolution triple, see
\cite{Lions1969,brezis}.

For the affine comparison class
 \eqref{eq:p-comparison-class}, apply this proposition to $v-g$.
 Indeed, $v-g\in L^p(0,\tau;V_p)$ and
 $\partial_t(v-g)\in L^{p'}(0,\tau;V_p^*)$. Thus $v-g$ is continuous
 into $H$, and so is $v$, because $g\in C([0,\tau];H)$.
 This justifies the endpoint values in the variational inequality without
 requiring $g$ or $v$ to have zero lateral trace. The difference $v-u$
 does have zero lateral trace, which is why the pairing with
 $\partial_t v\in V_p^*$ is well defined.

\subsection{The Ornstein--Uhlenbeck weak formulations}\label{subsec:ou-formulations}

For later use, we record explicitly the distributional and energy weak
formulations of \eqref{eq:main-ou-problem}. Under
\eqref{eq:ou-data-assumption}, a function
\[
  u\in L^2(0,T;H^1_g(\Omega,\gamma_d))\cap C([0,T];H),
  \qquad u(\cdot,0)=g(\cdot,0),
\]
is a distributional weak solution if
\begin{equation}\label{eq:main-weak-formulation}
  \int_{\Omega_T}u\,\partial_t\varphi\,d\gamma_d\,dt
  -\int_{\Omega_T}Du\cdot D\varphi\,d\gamma_d\,dt=0
\end{equation}
for every $\varphi\in C_c^\infty(\Omega_T)$.

If, in addition,
\[
  \partial_tu\in L^2(0,T;V^*),
\]
the corresponding energy weak formulation is
\begin{equation}\label{eq:main-weak-energy-fixed}
  \int_0^T\pair{\partial_tu}{\phi}_{V}\,dt
  +\int_0^T\int_\Omega Du\cdot D\phi\,d\gamma_d\,dt=0
\end{equation}
for every $\phi\in L^2(0,T;V)$. Proposition~\ref{prop:distributional-energy-ou-fixed}
shows that, in the spatial energy class above, the distributional formulation
automatically gives the additional time regularity and hence the two
formulations are equivalent.

\subsection{Time regularization}\label{subsec:time-regularization}


Definition \ref{def:p-var-intro} does not assume that a solution has a time derivative and therefore a variational solution cannot
 immediately be used as a comparison map. To overcome this difficulty, we shall use the causal exponential regularization of B\"ogelein-Duzaar-Marcellini in \cite {BDM2013ARMA} and
record only the properties needed in the uniqueness arguments below; see
\cite[Section~5.1 and Appendix~B]{BDM2013ARMA}.


\begin{lemma}[Exponential time regularization]\label{lem:time-regularization}
Let $H$ be a Hilbert space and $Z$ a reflexive Banach space
 with dense continuous embeddings
 $Z\hookrightarrow H\equiv H^*\hookrightarrow Z^*$.
 Let $1<r<\infty$, $r':=r/(r-1)$, and $\tau\in(0,T]$. Suppose that
\[
\begin{gathered}
  g\in C([0,\tau];H),
  \qquad z:=w-g\in L^r(0,\tau;Z)\cap C([0,\tau];H),\\
  z(0)=0,
  \qquad \partial_tg\in L^{r'}(0,\tau;Z^*).
\end{gathered}
\]
For $h>0$, define
\begin{equation}\label{eq:bdm-zero-seed-regularization}
  R_hz(t):=\frac1h\int_0^t e^{(s-t)/h}z(s)\,ds,
  \qquad
  w_h:=g+R_hz.
\end{equation}
Then
\[
  w_h-g\in L^r(0,\tau;Z),
  \qquad w_h\in C([0,\tau];H),
  \qquad w_h(0)=g(0),
  \qquad \partial_tw_h\in L^{r'}(0,\tau;Z^*),
\]
and
\begin{equation}\label{eq:bdm-regularization-derivative}
  \partial_tR_hz=-\frac1h(R_hz-z).
\end{equation}
Moreover,
\[
  w_h\to w\quad\text{in }L^r(0,\tau;Z)\cap C([0,\tau];H),
\]
and
\begin{equation}\label{eq:bdm-affine-one-sided}
  \limsup_{h\downarrow0}
  \int_0^\tau\pair{\partial_tw_h}{w_h-w}_{Z}\,dt\leq0.
\end{equation}
\end{lemma}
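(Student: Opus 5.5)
The plan is to verify the listed properties one at a time, working only with $z=w-g$, since $w_h-g=R_hz$. $R_hz$ is the convolution of $z$ with the causal kernel $k_h(s)=h^{-1}e^{-s/h}\mathbf 1_{s\ge0}$, truncated to $[0,\tau]$. This kernel has $\int_0^\infty k_h=1$ and $\int_0^t k_h\le 1$. Young's inequality, applied in the Bochner spaces, gives
\[
\|R_hz\|_{L^r(0,\tau;Z)}\le\|z\|_{L^r(0,\tau;Z)},\qquad \sup_{t}\|R_hz(t)\|_H\le\sup_t\|z(t)\|_H .
\]
Continuity of $t\mapsto R_hz(t)$ in $H$, and in fact in $Z$ almost everywhere, follows by dominated convergence. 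Clearly $R_hz(0)=0$, so $w_h(0)=g(0)$.

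Next, differentiating $R_hz(t)=h^{-1}e^{-t/h}\int_0^t e^{s/h}z(s)\,ds$ by the product rule, with $z\in L^1(0,\tau;Z)$, gives \eqref{eq:bdm-regularization-derivative} in the sense of $Z$-valued distributions. The right-hand side $-(R_hz-z)/h$ lies in $L^r(0,\tau;Z)$. Since $Z\hookrightarrow Z^*$ continuously and $\tau<\infty$, this is contained in $L^{r'}(0,\tau;Z^*)$ when $r\ge r'$. For $r<2$ we instead use $z\in C([0,\tau];H)\subset L^\infty(0,\tau;H)\hookrightarrow L^{r'}(0,\tau;Z^*)$. Adding $\partial_tg\in L^{r'}(0,\tau;Z^*)$ gives $\partial_tw_h\in L^{r'}(0,\tau;Z^*)$.

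For the convergence, we write
\[
R_hz(t)-z(t)=\int_0^{t/h}e^{-\sigma}\bigl(z(t-h\sigma)-z(t)\bigr)\,d\sigma-e^{-t/h}z(t).
\]
Uniform continuity of $z$ in $H$, together with $z(0)=0$, gives convergence in $C([0,\tau];H)$. The boundary term is controlled by $\|z(t)\|_H$ for $t$ near $0$, which is small precisely because $z(0)=0$. In $L^r(0,\tau;Z)$ we use the uniform bound above and approximate $z$ by continuous $Z$-valued functions: convergence holds for these, and the general case follows by density. The term $e^{-t/h}z(t)$ tends to zero in $L^r$ by dominated convergence.

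The main step is the one-sided inequality \eqref{eq:bdm-affine-one-sided}. Since $w_h-w=R_hz-z$, the quantity to estimate splits as
\[
\int_0^\tau\pair{\partial_tw_h}{w_h-w}_Z\,dt=\int_0^\tau\pair{\partial_tg}{R_hz-z}_Z\,dt+\int_0^\tau\pair{\partial_tR_hz}{R_hz-z}_Z\,dt .
\]
The first term tends to $0$ by Hölder's inequality, because $R_hz\to z$ in $L^r(0,\tau;Z)$ and $\partial_tg\in L^{r'}(0,\tau;Z^*)$. For the second term, \eqref{eq:bdm-regularization-derivative} shows that the pairing is really the $H$ inner product of $L^r(Z)$ functions, and
\[
\pair{\partial_tR_hz}{R_hz-z}=-\tfrac1h\|R_hz-z\|_H^2\le0
\]
pointwise in $t$. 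Hence the limsup is at most $0$.

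The only delicate point here is justifying that this duality pairing equals the $H$ inner product. This holds because both entries lie in $Z\subset H$ almost everywhere and the embedding is compatible with the identification $H\equiv H^*$. This is the main point where care is needed; it is the abstract version of \cite[Appendix~B]{BDM2013ARMA}, and I would follow that argument.
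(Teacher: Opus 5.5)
Your proposal is correct and follows essentially the same route as the paper. Both arguments use the one-sided approximate identity for convergence in $L^r(0,\tau;Z)$, the identity $\partial_tR_hz=-\frac1h(R_hz-z)$ with $R_hz-z\in L^\infty(0,\tau;H)\subset L^{r'}(0,\tau;Z^*)$, the same representation of $R_hz-z$ for convergence in $C([0,\tau];H)$, and the same splitting of $\int_0^\tau\pair{\partial_tw_h}{w_h-w}_Z\,dt$ into a nonpositive term and a vanishing term.

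Two small remarks. Your case split $r\ge r'$ versus $r<2$ is unnecessary, since the $L^\infty(0,\tau;H)$ argument covers every $r$. The identification of the pairing with the $H$ inner product is immediate from the definition of the evolution triple, so it is not a delicate point.
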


\begin{proof}
The kernel $h^{-1}e^{(s-t)/h}\mathbf 1_{0<s<t}$ is a one-sided approximate
identity, and therefore $R_hz\to z$ in $L^r(0,\tau;Z)$.  Differentiating the
integral gives \eqref{eq:bdm-regularization-derivative}.  Since
$z\in C([0,\tau];H)$, the difference $R_hz-z$ belongs to
$L^\infty(0,\tau;H)$ for every fixed $h$; hence
\[
  \partial_tw_h
  =\partial_tg-\frac1h(R_hz-z)
  \in L^{r'}(0,\tau;Z^*).
\]
The identity
\[
  R_hz(t)-z(t)
  =\frac1h\int_0^t e^{(s-t)/h}\bigl(z(s)-z(t)\bigr)\,ds-e^{-t/h}z(t)
\]
and the uniform continuity of $z$ in $H$, together with $z(0)=0$, yield
$R_hz\to z$ in $C([0,\tau];H)$.  Finally,
\[
\begin{aligned}
  \int_0^\tau\pair{\partial_tw_h}{w_h-w}\,dt
  &=-\frac1h\int_0^\tau\|w_h-w\|_H^2\,dt\\
  &\quad+\int_0^\tau\pair{\partial_tg}{w_h-w}_{Z}\,dt.
\end{aligned}
\]
The first term is nonpositive and the second tends to zero by the strong
convergence in $L^r(0,\tau;Z)$, proving
\eqref{eq:bdm-affine-one-sided}.
\end{proof}

\section{The Ornstein--Uhlenbeck operator}\label{sec:ou-operator}

This section is devoted to the proof of Corollary~\ref{cor:intro-main-ou}, i.e. the existence and uniqueness result for the solutions the Ornstein-Uhlenbeck equation \eqref{O-U}. Hence, throughout this section, we assume that the energy density is quadratic, namely $f(Du)=\frac{1}{2}|D u|^2$ to recover \eqref{O-U}. Moreover, we keep the Hilbert triple notation \eqref{eq:hilbert-triple} and assume \eqref{eq:ou-data-assumption}.

\subsection{Distributional and energy weak formulations}

\begin{proposition}[Distributional weak solutions are energy weak solutions]\label{prop:distributional-energy-ou-fixed}
Let
\[
  u\in L^2(0,T;H^1_g(\Omega,\gamma_d))\cap C([0,T];H),
  \qquad u(\cdot,0)=g(\cdot,0),
\]
satisfy the distributional identity \eqref{eq:main-weak-formulation}.  Then
$\partial_tu\in L^2(0,T;V^*)$ and $u$ satisfies the energy weak formulation
\eqref{eq:main-weak-energy-fixed}.  Conversely, every energy weak solution
satisfies \eqref{eq:main-weak-formulation}.
\end{proposition}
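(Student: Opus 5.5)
The plan is to identify the distributional time derivative of $u$ with a functional in $L^2(0,T;V^*)$. The identity \eqref{eq:main-weak-formulation} already prescribes $\partial_t u$ on test functions supported in $\Omega_T$, and the real work is to extend this to all of $L^2(0,T;V)$.

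\textbf{Step 1: the candidate derivative.} Define $\ell(t)\in V^*$ by
\[
\pair{\ell(t)}{\phi}_V:=-\int_\Omega Du(\cdot,t)\cdot D\phi\,d\gamma_d, \qquad \phi\in V.
\]
In the notation of \eqref{eq:quadratic-dirichlet-operator} this is $\ell=-A_{\gamma_d}u$. Note that $u(\cdot,t)$ lies in $H^1_g$, not in $V$, but the right-hand side only uses $Du\in L^2(\Omega_T,\gamma_d)$. Cauchy--Schwarz gives $\|\ell(t)\|_{V^*}\le\|Du(\cdot,t)\|_{L^2(\Omega,\gamma_d)}$, so $\ell\in L^2(0,T;V^*)$.

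\textbf{Step 2: identify $\partial_t u$ with $\ell$.} Take test functions of product form $\varphi(x,t)=\eta(t)\zeta(x)$ with $\eta\in C_c^\infty(0,T)$ and $\zeta\in C_c^\infty(\Omega)$. Then \eqref{eq:main-weak-formulation} reads
\[
\int_0^T\eta'(t)\int_\Omega u\,\zeta\,d\gamma_d\,dt=-\int_0^T\eta(t)\pair{\ell(t)}{\zeta}_V\,dt.
\]
Regard $u$ as an element of $L^2(0,T;H)\subset L^2(0,T;V^*)$, with $\int_\Omega u\zeta\,d\gamma_d=\pair{u}{\zeta}_V$. Since $C_c^\infty(\Omega)$ is dense in $V$ by definition of $W^{1,2}_0$, and both sides are continuous in $\zeta$ for the $V$-norm, the identity extends to all $\zeta\in V$. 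This is exactly the statement that the $V^*$-valued distributional derivative of $u$ on $(0,T)$ equals $\ell$. Hence $\partial_t u=\ell\in L^2(0,T;V^*)$.

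\textbf{Step 3: the energy weak formulation.} From Step 2, \eqref{eq:main-weak-energy-fixed} holds for $\phi=\eta\zeta$ with $\eta\in L^2(0,T)$ and $\zeta\in V$. Finite sums $\sum\eta_k\zeta_k$ are dense in $L^2(0,T;V)$, and both terms of \eqref{eq:main-weak-energy-fixed} are continuous on $L^2(0,T;V)$. Therefore the formulation holds for every $\phi\in L^2(0,T;V)$.

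The step needing the most care is the precise meaning of $\partial_t u$. The solution $u$ is not $V$-valued, so the derivative should be understood through $u-g$: $\partial_t u\in L^2(0,T;V^*)$ means that $u$, viewed as a $V^*$-valued function, has weak derivative in $L^2$, computed on $V$-valued test functions. This is consistent with $\partial_t g\in L^2(0,T;V^*)$ in \eqref{eq:ou-data-assumption}. It also gives $\partial_t(u-g)\in L^2(0,T;V^*)$, so the Lions--Magenes proposition applies to $u-g$, as required later.

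\textbf{Converse.} Given an energy weak solution, take $\phi=\varphi\in C_c^\infty(\Omega_T)\subset L^2(0,T;V)$. Because $\varphi$ vanishes near $t=0$ and $t=T$, the definition of the weak time derivative gives
\[
\int_0^T\pair{\partial_t u}{\varphi}_V\,dt=-\int_{\Omega_T}u\,\partial_t\varphi\,d\gamma_d\,dt,
\]
with no boundary terms. Substituting this into \eqref{eq:main-weak-energy-fixed} yields \eqref{eq:main-weak-formulation}.
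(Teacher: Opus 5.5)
Your proof is correct and follows the paper's argument essentially step for step. Like the paper, you define $F=-A_{\gamma_d}u\in L^2(0,T;V^*)$ (your $\ell$), test \eqref{eq:main-weak-formulation} with products $\eta(t)\phi(x)$, pass from $C_c^\infty(\Omega)$ to $V$ by density to get $\partial_t u=F$ in $\mathcal D'(0,T;V^*)$, and obtain the energy formulation and the converse by density and integration by parts in time. (Your Step 3 density argument can even be skipped: once $\partial_t u=\ell$ in $L^2(0,T;V^*)$, \eqref{eq:main-weak-energy-fixed} holds pointwise in $t$ for every $\phi\in L^2(0,T;V)$.)
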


\begin{proof}
Since $u-g\in L^2(0,T;V)$ and
$g\in L^2(0,T;H^1(\Omega,\gamma_d))$, we have
$Du\in L^2(\Omega_T,\gamma_d\otimes dt)$.  Define
$F\in L^2(0,T;V^*)$ by
\[
  \pair{F(t)}{\phi}_{V}
  :=-\int_\Omega Du(t)\cdot D\phi\,d\gamma_d,
  \qquad \phi\in V.
\]
Testing \eqref{eq:main-weak-formulation} with functions of the form
$\eta(t)\phi(x)$, first for $\phi\in C_c^\infty(\Omega)$ and then
using density in $V$, shows that $\partial_tu=F$ in
$\mathcal D'(0,T;V^*)$.  Thus $\partial_tu\in L^2(0,T;V^*)$, and
\eqref{eq:main-weak-energy-fixed} follows by density in
$L^2(0,T;V)$.  The converse is obtained by testing the energy identity with
compactly supported smooth functions and integrating by parts in time.
\end{proof}

\subsection{Existence and the weak-to-variational implication}

{We here recall the following result contained in \cite[Theorem 10.9]{brezis} concerning the existence and uniqueness of a weak solution to our problem \eqref{eq:main-ou-problem}.}
\begin{proposition}[Existence and uniqueness of weak solutions]\label{prop:weak-existence-nonhomogeneous}
There exists a unique weak solution to the the Cauchy-Dirichlet problem \eqref{eq:main-ou-problem}. More precisely,
\[
 u\in  L^2 \left(0,T;H_g^1(\Omega,\gamma_d)\right) \cap C \left([0,T];H\right),
\]
with $u(\cdot,0)=g(\cdot,0)$ and \eqref{eq:main-weak-energy-fixed} holds. 
\end{proposition}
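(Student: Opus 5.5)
We reduce the nonhomogeneous problem to a homogeneous one by the substitution $w:=u-g$ and apply the abstract Lions theory in the Hilbert triple \eqref{eq:hilbert-triple}. By \eqref{eq:ou-data-assumption} we have $g(\cdot,0)\in H$ and $\partial_tg\in L^2(0,T;V^*)$. The map $t\mapsto A_{\gamma_d}g(t)$, defined by $\pair{A_{\gamma_d}g}{\phi}_V:=\int_\Omega Dg\cdot D\phi\,d\gamma_d$, belongs to $L^2(0,T;V^*)$ by H\"older's inequality and $g\in L^2(0,T;H^1(\Omega,\gamma_d))$. We therefore look for
\[
 w\in L^2(0,T;V),\qquad \partial_tw\in L^2(0,T;V^*),\qquad w(0)=0,
\]
solving
\[
 \partial_tw+A_{\gamma_d}w=F:=-\partial_tg-A_{\gamma_d}g\quad\text{in }L^2(0,T;V^*).
\]

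The key steps are as follows. First, the bilinear form $\mathfrak a_{\gamma_d}$ is continuous on $V$, and it is coercive, $\mathfrak a_{\gamma_d}(\phi,\phi)=\|D\phi\|^2_{L^2(\Omega,\gamma_d)}\ge c\|\phi\|_V^2$, by the weighted Poincar\'e inequality \eqref{eq:weighted-poincare}. It is also time independent, so the measurability hypotheses are trivial. Second, since $V\hookrightarrow H$ densely by Proposition~\ref{propembedding}, the J.-L. Lions theorem \cite[Theorem 10.9]{brezis} gives a unique $w$ in the class above with $w(0)=0$. It can be proved by Faedo--Galerkin approximation or Lions' projection theorem, combined with the energy estimate obtained by testing with $w$ and using \eqref{eq:lions-magenes}. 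Third, by the evolution-space continuity proposition, $w\in C([0,T];H)$. Setting $u:=g+w$ then gives $u\in L^2(0,T;H^1_g)\cap C([0,T];H)$ with $u(0)=g(0)$, $\partial_tu\in L^2(0,T;V^*)$, and \eqref{eq:main-weak-energy-fixed} after pairing the equation with $\phi\in L^2(0,T;V)$ and integrating in time.

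For uniqueness, suppose $u_1,u_2$ are two energy weak solutions. Their difference $\delta:=u_1-u_2$ lies in $L^2(0,T;V)$ with $\partial_t\delta\in L^2(0,T;V^*)$ and $\delta(0)=0$. Testing the difference of the equations with $\phi=\delta\mathbf 1_{(0,t)}$ and applying \eqref{eq:lions-magenes} yields
\[
 \tfrac12\|\delta(t)\|_H^2+\int_0^t\|D\delta\|^2_{L^2(\Omega,\gamma_d)}\,ds=0,
\]
so $\delta\equiv0$.

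The main obstacle is essentially bookkeeping: the Gaussian weight must not spoil the abstract framework. In particular, one has to confirm that the pivot identification $H\equiv H^*$ uses the weighted inner product, so that the pairing $\pair{\cdot}{\cdot}_V$ extends the $L^2(\Omega,\gamma_d)$ product. One also needs the affine lifting to be admissible, which requires only $\partial_tg\in L^2(0,T;V^*)$ and not zero lateral trace of $g$. Both points follow from \eqref{eq:norm-equivalence} and Proposition~\ref{propembedding}.
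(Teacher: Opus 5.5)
Your proposal is correct and follows the paper's proof essentially step by step. Both arguments use the lifting $w:=u-g$ with right-hand side $-\partial_tg-A_{\gamma_d}g\in L^2(0,T;V^*)$, coercivity of the Gaussian Dirichlet form via the weighted Poincar\'e inequality \eqref{eq:weighted-poincare}, the J.-L.~Lions theorem \cite[Theorem 10.9]{brezis} in the weighted Hilbert triple, and uniqueness by testing the difference of two solutions with itself; your remarks on the pivot identification and on continuity into $H$ via the Lions--Magenes lemma only make explicit what the paper leaves implicit.
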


\begin{proof}
Set $w:=u-g$. Then \eqref{eq:main-weak-formulation} is equivalent to
\begin{equation}\label{eq:w-equation-ou}
 \pair{\partial_t w}{\phi}_{V}+\int_\Omega Dw\cdot D\phi\,d\gamma_d
 =-\pair{\partial_t g}{\phi}_{V}-\int_\Omega Dg\cdot D\phi\,d\gamma_d
\end{equation}
for every $\phi\in V$, with $w(\cdot,0)=0$ in $H$. The right-hand side of \eqref{eq:w-equation-ou} belongs to $L^2(0,T;V^*)$. The bilinear form
\[
 a(w,\phi):=\int_\Omega Dw\cdot D\phi\,d\gamma_d
\]
is continuous on $V\times V$ and coercive on $V$ by the weighted Poincar\'e inequality \eqref{eq:weighted-poincare}. The standard Lions theorem for linear parabolic equations in the triple $V\hookrightarrow H\hookrightarrow V^*$ therefore gives a unique
\[
 w\in L^2(0,T;V)\cap C([0,T];H),\qquad \partial_t w\in L^2(0,T;V^*).
\]
Then $u=w+g$ is the desired weak solution. Uniqueness follows by applying the same equation to the difference of two solutions and testing by the difference.
\end{proof}

\begin{proposition}[Weak solutions are variational solutions]\label{prop:weak-implies-var}
Every weak solution of \eqref{eq:main-ou-problem} satisfies the variational inequality \eqref{eq:main-variational-inequality-intro}.
\end{proposition}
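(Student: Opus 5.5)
Let $u$ be the weak solution, fix $\tau\in(0,T]$, and take a comparison map $v$ as in \eqref{eq:ou-comparison-class}. The idea is to test the energy weak formulation \eqref{eq:main-weak-energy-fixed} with $\phi:=(v-u)\mathbf 1_{(0,\tau)}$. Convexity of $\xi\mapsto\frac12|\xi|^2$ then converts the gradient term into the energy difference, and the Lions--Magenes identity \eqref{eq:lions-magenes} produces the $H$-norm terms. This test function is admissible: $v-u=(v-g)-(u-g)\in L^2(0,\tau;V)$, and extending it by zero past $\tau$ gives an element of $L^2(0,T;V)$.

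\textbf{Step 1: test the equation.} Testing gives
\[
 \int_0^\tau\pair{\partial_tu}{v-u}_V\,dt+\int_0^\tau\int_\Omega Du\cdot D(v-u)\,d\gamma_d\,dt=0 .
\]

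\textbf{Step 2: use convexity.} The pointwise inequality $\frac12|Dv|^2-\frac12|Du|^2\ge Du\cdot D(v-u)$ holds, since the difference equals $\frac12|D(v-u)|^2\ge0$. It yields
\[
 \frac12\int_0^\tau\int_\Omega\bigl(|Dv|^2-|Du|^2\bigr)\,d\gamma_d\,dt\ \ge\ \int_0^\tau\pair{\partial_tu}{u-v}_V\,dt .
\]

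\textbf{Step 3: handle the time derivative term.} Add $\int_0^\tau\pair{\partial_tv}{v-u}_V\,dt$ to both sides. The right-hand side becomes $\int_0^\tau\pair{\partial_t(v-u)}{v-u}_V\,dt$. Since $w:=v-u\in L^2(0,\tau;V)$ and $\partial_tw\in L^2(0,\tau;V^*)$, the Lions--Magenes proposition, applied on $[0,\tau]$ with $s=0$, $t=\tau$, gives
\[
 \int_0^\tau\pair{\partial_t(v-u)}{v-u}_V\,dt=\frac12\|(v-u)(\tau)\|_H^2-\frac12\|(v-u)(0)\|_H^2 .
\]
Because $u(0)=g(0)$, the last term is $\frac12\|v(0)-g(0)\|_H^2$. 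This is exactly \eqref{eq:main-variational-inequality-intro}. In fact it holds with the extra nonnegative term $\frac12\int_0^\tau\int_\Omega|D(v-u)|^2\,d\gamma_d\,dt$ on the left.

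\textbf{Main point of care.} The main point is justifying the pairing and the endpoint values. The maps $u$ and $v$ individually do not lie in $L^2(0,\tau;V)$, because they carry the nonhomogeneous lateral datum $g$. I will therefore apply Lions--Magenes only to the difference $v-u$, which has zero lateral trace. I will also check that the continuous representative of $v-u$ it produces agrees with $v-u$ computed from the continuous representatives of $u$ and $v$; the latter exists by the remark following the Lions--Magenes proposition, applied to $v-g$. Agreement holds by uniqueness of the continuous representative, so the values at $t=0$ and $t=\tau$ are unambiguous. The remaining steps are routine.
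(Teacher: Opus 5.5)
Your proof is correct and follows essentially the same route as the paper: test the energy weak formulation with $v-u$ on $(0,\tau)$, convert the gradient term via $\frac12|Dv|^2-\frac12|Du|^2\ge Du\cdot D(v-u)$, add $\int_0^\tau\langle\partial_t v,v-u\rangle_V\,dt$ to both sides, and evaluate $\int_0^\tau\langle\partial_t(v-u),v-u\rangle_V\,dt$ by the Lions--Magenes identity. Your treatment of the endpoint values is more careful than the paper's, which loosely invokes Aubin--Lions for the continuity of $v$. You apply Lions--Magenes only to $v-u$ and $v-g$, and you match the continuous representatives.
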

\begin{proof}
We fix $\tau \in (0,T]$ and we consider $u$ weak solution to the Cauchy-Dirichlet problem \eqref{eq:main-ou-problem}, then for any test function $\phi \in L^2 (0,\tau; V)$ we have
\begin{eqnarray}\label{var-for}
\int_0^\tau \langle \p_t u, \phi \rangle_{V}\,dt-\int_0^\tau\int_{\Omega}\div_{\gamma_d}\left( Du\right)\,\phi\, d\gamma_d \ dt=0.
\end{eqnarray}
By integrating by parts \eqref{var-for} using the integration formula \eqref{eq:gaussian-ibp-zero-trace} with $a=Du$, we obtain
\begin{eqnarray}\label{var-for-2}
\int_0^\tau \langle \p_t u, \phi \rangle_{V}\,dt+\int_0^\tau\int_{\Omega}Du \cdot D\phi \; d\gamma_d \ dt=0,
\end{eqnarray}
for every $\phi \in L^2 (0,\tau; V)$. As $f(Du)=\frac{1}{2}|Du|^2$ is trivially convex, we exploit inequality $\frac{1}{2}|D\phi+Du|^2-\frac{1}{2}|Du|^2 \geq  Du\cdot D\phi $ in \eqref{var-for-2} and infer 
\begin{eqnarray}\label{ineq-var}
\int_0^\tau \langle \p_t u, \phi \rangle_{V}\,dt+\int_0^\tau\int_{\Omega}\left(\frac{1}{2}|D(\phi+u)|^2-\frac{1}{2}|Du|^2 \right)\, d\gamma_d \ dt\geq 0,
\end{eqnarray}
for every $\phi \in L^2 (0,\tau; V)$. We now perform the substitution $v:= u+\phi$, which clearly satisfies $v \in   u +L^2 (0,\tau; V) \subset L^2 (0,\tau; H^1(\Omega,\gamma_d))$ by Proposition \ref{propembedding} and {$u=v=g$} on $\p \Omega \times (0,\tau)$. Moreover, we have $\p_t v \in L^2 \left( 0,T;V^*\right)$, provided that we additionally assume $\p_t \phi \in L^2 \left( 0,T;V^*\right)$. We now rewrite inequality \eqref{ineq-var} in terms of the function $v=u+\phi$ as follows
\begin{align*}
&\int_0^\tau \langle \p_t v, v-u \rangle_{H^1(\Omega,\gamma_d)}\,dt+\int_0^\tau\int_{\Omega}\left(\frac{1}{2}|Dv|^2-\frac{1}{2}|Du|^2 \right)\, d\gamma_d \ dt\\
&\quad\geq \int_0^\tau \langle \p_t (v-u), v-u \rangle_{H^1(\Omega,\gamma_d)}\,dt.
\end{align*}
Thus, the previous inequality holds true for every $v \in  L^2 (0,\tau; V)$ with $\p_t v \in L^2 \left( 0,T;V^*\right)$. As a consequence of Aubin-Lions Lemma, according to Proposition \ref{propembedding}, we have $v \in C^0([0,\tau],H)$. Hence, we can apply the Fundamental Theorem of Calculus to the right-hand side of the previous inequality and get
\begin{equation*}
\begin{split}
\int_0^\tau \langle \p_t (v-u), v-u \rangle_{H^1(\Omega,\gamma_d)}\,dt =\frac{1}{2}\Vert (v-u)(\cdot,\tau)\Vert^2_{H}-\frac{1}{2}\Vert v(\cdot,0)-g(\cdot,0)\Vert^2_{H}.
\end{split}
\end{equation*}
Therefore, we obtain that inequality
\begin{align*}
&\int_0^\tau \langle \p_t v, v-u \rangle_{H^1(\Omega,\gamma_d)}\,dt+\int_0^\tau\int_{\Omega}\left(\frac{1}{2}|Dv|^2-\frac{1}{2}|Du|^2 \right)\, d\gamma_d \ dt\\
&\quad\geq \frac{1}{2}\Vert (v-u)(\cdot,\tau)\Vert^2_H-\frac{1}{2}\Vert v(\cdot,0)-g(\cdot,0)\Vert^2_H
\end{align*}
holds true for any comparison function $v \in  L^2 (0,\tau; V)$ whose time derivative satisfies $\p_t v \in L^2 \left( 0,T;V^*\right)$. This proves that $u$ is a variational solution to \eqref{eq:main-ou-problem} in the sense of Definition \ref{def:p-var-intro}.
   \end{proof}

\subsection{Uniqueness in the variational class}

\begin{proposition}[Uniqueness of variational solutions]\label{prop:var-uniqueness-ou}
The variational solution 
\begin{equation*}
u \in L^2 \left(0,T;H_g^1(\Omega,\gamma_d)\right) \cap C \left([0,T];H\right) 
\end{equation*}
to the Cauchy-Dirichlet problem \eqref{eq:main-ou-problem} is unique.
\end{proposition}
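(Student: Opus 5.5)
Let $u_1,u_2$ be two variational solutions with the same datum $g$. We compare them through the ``regularized midpoint'' argument announced in the introduction. First, for $i=1,2$ we regularize in time via Lemma~\ref{lem:time-regularization} with $Z=V$, $r=2$. The hypotheses hold because $z_i:=u_i-g\in L^2(0,\tau;V)\cap C([0,\tau];H)$ and $z_i(0)=0$. This yields $u_{i,h}:=g+R_hz_i$, which are admissible comparison maps in the class \eqref{eq:ou-comparison-class}. Next we set
\[
 v_h:=\tfrac12\bigl(u_{1,h}+u_{2,h}\bigr).
\]
This map lies in $L^2(0,\tau;H^1_g(\Omega,\gamma_d))$, has $\partial_tv_h\in L^2(0,\tau;V^*)$, and satisfies $v_h(0)=g(0)$. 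Hence it may be inserted into the variational inequality \eqref{eq:main-variational-inequality-intro} of both $u_1$ and $u_2$, and the initial term $\tfrac12\|v_h(0)-g(0)\|_H^2$ vanishes.

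Adding the two inequalities gives
\[
\begin{aligned}
 &\int_0^\tau\pair{\partial_tv_h}{2v_h-u_1-u_2}_V\,dt
 +\frac12\int_0^\tau\!\!\int_\Omega\bigl(2|Dv_h|^2-|Du_1|^2-|Du_2|^2\bigr)d\gamma_d\,dt\\
 &\qquad\ge\frac12\|(v_h-u_1)(\tau)\|_H^2+\frac12\|(v_h-u_2)(\tau)\|_H^2 .
\end{aligned}
\]
Since $2v_h-u_1-u_2=(u_{1,h}-u_1)+(u_{2,h}-u_2)$ and $\partial_tv_h=\tfrac12(\partial_tu_{1,h}+\partial_tu_{2,h})$, the time term splits into diagonal pieces $\tfrac12\pair{\partial_tu_{i,h}}{u_{i,h}-u_i}$ and cross pieces $\tfrac12\pair{\partial_tu_{j,h}}{u_{i,h}-u_i}$ with $i\neq j$.

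The main obstacle is these cross pieces, because \eqref{eq:bdm-affine-one-sided} only controls the diagonal ones. We handle them with \eqref{eq:bdm-regularization-derivative}. Since the regularization uses the same $h$ for both maps, we have $\partial_tu_{j,h}=\partial_tg-\frac1h(u_{j,h}-u_j)$. The $\partial_tg$ contribution tends to zero by strong convergence of $u_{i,h}\to u_i$ in $L^2(0,\tau;V)$. What remains is
\[
 -\frac1h\int_0^\tau\bigl(u_{1,h}-u_1,\,u_{2,h}-u_2\bigr)_H\,dt .
\]
This is combined with the diagonal terms $-\frac1h\int_0^\tau\|u_{i,h}-u_i\|_H^2\,dt$ appearing in the proof of Lemma~\ref{lem:time-regularization}. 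The total $-h^{-1}$ contribution equals
\[
 -\frac1{2h}\int_0^\tau\bigl\|(u_{1,h}-u_1)+(u_{2,h}-u_2)\bigr\|_H^2\,dt\le0 .
\]
Therefore the $\limsup_{h\downarrow0}$ of the whole time term is $\le0$.

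Finally we pass to the limit $h\downarrow0$. We have $v_h\to\frac12(u_1+u_2)$ in $L^2(0,\tau;H^1)$ and in $C([0,\tau];H)$, so the energy term converges. By the parallelogram identity, $2|D\tfrac{u_1+u_2}2|^2-|Du_1|^2-|Du_2|^2=-\tfrac12|D(u_1-u_2)|^2$. Since $v_h(\tau)-u_1(\tau)\to\tfrac12(u_2-u_1)(\tau)$ in $H$, this yields
\[
 -\frac14\int_0^\tau\!\!\int_\Omega|D(u_1-u_2)|^2\,d\gamma_d\,dt\ \ge\ \frac14\|(u_1-u_2)(\tau)\|_H^2 .
\]
Hence $(u_1-u_2)(\tau)=0$ for every $\tau\in(0,T]$, and by continuity into $H$ we conclude $u_1=u_2$. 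Strict convexity of $\frac12|\xi|^2$ is used only through the parallelogram identity. The delicate point to check carefully is the exact cancellation of the $h^{-1}$ cross terms, which relies on using the same kernel $R_h$ for both solutions.
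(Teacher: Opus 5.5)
Your proof is correct and is essentially the paper's regularized-midpoint argument, with the parallelogram identity playing the role of the convexity inequality $2f(Dw)\le f(Du_1)+f(Du_2)$. Because $R_h$ is linear, your $v_h$ is exactly the paper's $w_h=g+R_h\bigl(\tfrac12(u_1+u_2)-g\bigr)$, and $2v_h-u_1-u_2=2(w_h-w)$ with $w=\tfrac12(u_1+u_2)$; so the cross terms you flag as the main obstacle are no obstacle, since \eqref{eq:bdm-affine-one-sided} applies directly and your $-\frac{1}{2h}\int_0^\tau\|\cdot\|_H^2\,dt$ computation just re-derives it.
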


\begin{proof}
Let $u_1$ and $u_2$ be two variational solutions and fix $\tau\in(0,T]$.  Set
\[
  w:=\frac{u_1+u_2}{2}.
\]
Then $w-g\in L^2(0,\tau;V)$, $w\in C([0,\tau];H)$, and $w(\cdot,0)=g(\cdot,0)$.  Since $w$ need not have a time derivative, we use the time regularization from Lemma~\ref{lem:time-regularization} with $Z=V$ and $r=2$:
\[
  w_h:=g+R_h(w-g).
\]
Then $w_h$ is admissible, $w_h(\cdot,0)=g(\cdot,0)$,
\[
  w_h\to w\quad\text{in }L^2(0,\tau;V)\text{ and in }C([0,\tau];H),
\]
and
\begin{equation}\label{eq:ou-time-reg-limsup-fixed}
  \limsup_{h\downarrow0}\int_0^\tau
  \pair{\partial_tw_h}{w_h-w}_{V}\,dt\le0.
\end{equation}
Using $v=w_h$ as comparison map in the variational inequalities for $u_1$ and $u_2$, and adding them, gives
\begin{equation}\label{eq:ou-added-var-fixed}
\begin{aligned}
&2\int_0^\tau\pair{\partial_tw_h}{w_h-w}_{V}\,dt
 +2\int_0^\tau\int_\Omega f(Dw_h)\,d\gamma_d\,dt
 -\sum_{i=1}^2\int_0^\tau\int_\Omega f(Du_i)\,d\gamma_d\,dt  \\
&\hspace{3cm}\ge
 \frac12\sum_{i=1}^2\norm{(w_h-u_i)(\cdot,\tau)}_H^2 .
\end{aligned}
\end{equation}
Letting $h\downarrow0$ in \eqref{eq:ou-added-var-fixed}, using the strong convergence of $w_h$ and \eqref{eq:ou-time-reg-limsup-fixed}, yields
\begin{equation}\label{eq:ou-midpoint-fixed}
  2\int_0^\tau\int_\Omega f(Dw)\,d\gamma_d\,dt
  -\sum_{i=1}^2\int_0^\tau\int_\Omega f(Du_i)\,d\gamma_d\,dt
  \ge
  \frac12\sum_{i=1}^2\norm{(w-u_i)(\cdot,\tau)}_H^2 .
\end{equation}
On the other hand, convexity gives
\[
  2f(Dw)=2f\left(\frac{Du_1+Du_2}{2}\right)
  \le f(Du_1)+f(Du_2)
\]
almost everywhere.  Thus the left-hand side of \eqref{eq:ou-midpoint-fixed} is nonpositive, whereas the right-hand side is nonnegative.  Hence both vanish and
\[
  u_1(\cdot,\tau)=u_2(\cdot,\tau)\qquad\text{in }H.
\]
Since $\tau\in(0,T]$ was arbitrary and both functions are continuous into $H$, $u_1=u_2$ in $C([0,T];H)$ and hence a.e. in $\Omega_T$.
\end{proof}

\begin{proof}[Proof of Corollary~\ref{cor:intro-main-ou}]
Let $u$ be a variational solution to \eqref{eq:main-ou-problem}. First, we observe that by Proposition \ref{prop:weak-existence-nonhomogeneous} there exists $\tilde{u}$ weak solution to \eqref{eq:main-ou-problem}. By Proposition~\ref{prop:weak-implies-var}, this weak solution is a variational solution. Moreover, thanks to Proposition \ref{prop:var-uniqueness-ou}, the variational solution is unique, i.e. $\tilde{u}=u$. Hence, $u$ is also a weak solution to system \eqref{eq:main-ou-problem} and the proof of Corollary \ref{cor:intro-main-ou} is concluded.

\begin{remark}[The role of the Gaussian formulation]
The Ornstein--Uhlenbeck equation can of course be studied in the classical Lebesgue framework, where the drift term $-x\cdot Du$ may be regarded as a lower-order term and treated by standard parabolic theory. The advantage of the Gaussian formulation is of a different nature. Writing
$$
\Delta u-x\cdot Du=\operatorname{div}_{\gamma_d}Du
$$
incorporates the drift into the principal part of the operator and reveals its symmetric and energy-driven structure in $L^2(\Omega,\gamma_d)$. In particular, the Ornstein--Uhlenbeck evolution becomes the gradient flow of the Gaussian Dirichlet energy
$$
\mathcal E(u)=\frac12\int_\Omega |Du|^2\,d\gamma_d.
$$
This point of view is especially useful for our purposes, since it places the equation directly within the variational framework and allows us to exploit the rich toolbox of the calculus of variations, including convexity arguments, variational inequalities, comparison principles, time regularization, and the corresponding methods for nonlinear and obstacle problems developed in the subsequent sections.
\end{remark}


\end{proof}

\section{The Ornstein--Uhlenbeck obstacle problem}\label{sec:obstacle}

Building on Section~\ref{sec:ou-operator}, we here discuss existence and uniqueness of solutions for the Ornstein-Uhlenbeck obstacle problem. Namely, we are able to prove Theorem \ref{thm:intro-obstacle}. This result is obtained through Theorems \ref{thm:obstacle-existence} and \ref{thm:obstacle-uniqueness}, in the spirit of \cite{BDS2017}.

\subsection{Functional framework}\label{subsec:obstacle-framework}

For an obstacle $\psi:\Omega_T\to[-\infty,+\infty)$ and $g$ satisfying
\eqref{eq:ou-data-assumption}, $0<\tau\leq T$, set
$$X_g(\Omega_\tau):=\{w\in L^2(0,\tau;H^1(\Omega,\gamma_d)):w-g\in L^2(0,\tau;V)\},$$
 with $V$ as in \eqref{eq:hilbert-triple} and
\begin{equation}\label{eq:obstacle-admissible-class}
 K_{\psi,g}(\Omega_\tau):=\bigl\{w\in X_g(\Omega_\tau):w\geq\psi\text{ a.e. in }\Omega_\tau\bigr\},
\end{equation}
writing $X_g:=X_g(\Omega_T)$, $K_{\psi,g}:=K_{\psi,g}(\Omega_T)$. We assume
$g\geq\psi$ a.e., so $g\in K_{\psi,g}(\Omega_\tau)$ for every $\tau$.

\begin{definition}[Solution of the obstacle problem]\label{def:obstacle-solution}
$u\in K_{\psi,g}(\Omega_T)\cap C([0,T];H)$, $u(\cdot,0)=g(\cdot,0)$, solves
the obstacle problem if, for every $\tau\in(0,T]$ and every
$v\in K_{\psi,g}(\Omega_\tau)$ with $\partial_tv\in L^2(0,\tau;V^*)$,
\begin{align}\nonumber
 \int_0^\tau \pair{\partial_t v}{v-u}_V\,dt
 +\int_0^\tau\int_\Omega\Bigl[\frac12|Dv|^2-\frac12|Du|^2\Bigr]\,d\gamma_d\,dt\\ \label{eq:obstacle-variational-inequality}
 \geq\frac12\norm{(v-u)(\cdot,\tau)}_H^2 -\frac12\norm{v(\cdot,0)-g(\cdot,0)}_H^2.
\end{align}
\end{definition}

\noindent Note that for smooth data this is the complementarity system
$u\geq\psi$, $\partial_tu-\divg Du\geq0$, $(\partial_tu-\divg Du)(u-\psi)=0$.\\

We aim to prove existence and uniqueness under the hypothesis
\begin{align}\nonumber
 \psi\in L^2(\Omega_T,\gamma_d)\cap L^2(0,T;H^1(\Omega,\gamma_d)),\\ \nonumber
 g\in L^2(0,T;H^1(\Omega,\gamma_d)),\ \partial_tg\in L^2(\Omega_T,\gamma_d),\\ \label{eq:obstacle-sharp-hypothesis}
 g\geq\psi\text{ a.e.}, \quad g_o:=g(\cdot,0)\in H
\end{align}
in the spirit of Bögelein--Duzaar--Scheven \cite{BDS2017}, since this is the Gaussian-divergence counterpart, for $f(\xi)=\frac12|\xi|^2$. Since
$\Omega$ is bounded Lipschitz, the norm equivalence \eqref{eq:norm-equivalence}
transports every estimate of
\cite{BDS2017} into the Gaussian setting at the cost of fixed constants, and
we cite \cite{BDS2017} directly wherever the argument transcribes verbatim
under $dx\mapsto d\gamma_d$, $f\mapsto\frac12|\xi|^2$, giving full detail
only when the argument differs. 

\subsection{Regular data}\label{subsec:obstacle-regular}

Assume
\begin{equation}\label{eq:obstacle-regular-data}
 g\in L^2(0,T;H^1(\Omega,\gamma_d)),\
 \partial_tg\in L^2(\Omega_T,\gamma_d)\cap L^2(0,T;H^1(\Omega,\gamma_d)),\
 g_o\in H^1(\Omega,\gamma_d),
\end{equation}
\begin{equation}\label{eq:obstacle-regular-obstacle}
 \psi\in g+L^2(0,T;H^1_0(\Omega,\gamma_d)),\
 \partial_t\psi\in L^2(\Omega_T,\gamma_d)\cap L^2(0,T;H^1(\Omega,\gamma_d)),\
 \psi_o\in g_o+H^1_0(\Omega,\gamma_d),
\end{equation}
$g\geq \psi $ a.e.\ . As in \cite[Rmk.~4.2]{BDS2017} this gives
$g(t),\psi(t)\in H^1(\Omega,\gamma_d)$ for every $t$. We introduce

\begin{definition}[Strong solution]\label{def:strong-obs}
We say that
$u\in C([0,T];H)\cap X_g$, $u\geq\psi$, $u(\cdot,0)=g_o$,  is a \emph{strong}
solution if \eqref{eq:obstacle-variational-inequality} holds with $\tau=T$
against every $v\in X_g$ with $\partial_tv\in L^2(\Omega_T,\gamma_d)$,
$v(\cdot,0)\in H$, $v\geq\psi$.
\end{definition}

We use throughout the Landes mollification $$[v]_h^{v_o}(t):=e^{-t/h}v_o
+\frac1h\int_0^te^{(s-t)/h}v(s)\,ds.$$
In particular, according to \cite[Lemmas~2.1--2.2]{BDS2017}:\\ 
\noindent(i) $[v]_h^{v_o}\in C([0,\tau];H)\cap L^2(0,\tau;Z)$, $[v]_h^{v_o}(0)=v_o$,
$\partial_t[v]_h^{v_o}=-\frac1h([v]_h^{v_o}-v)$, $[v]_h^{v_o}\to v$ if
$v_o\to v(0)$ in $H$. \\
\noindent(ii) If $v_o=v(0)$ and $\partial_tv\in L^2(0,\tau;H)$, then $\partial_t[v]_h^{v(0)}$
is the convolution of $\partial_tv$ with the kernel $r\mapsto\frac1h e^{-r/h}\mathbf1_{r>0}$,
whose $L^1(\mathbb R)$-norm equals $1$ for every $h$. Young's convolution
inequality then gives
$\|\partial_t[v]_h^{v(0)}\|_{L^2(0,\tau;H)}\leq\|\partial_tv\|_{L^2(0,\tau;H)}$,
with no loss as $h\downarrow0$.\\ 
\noindent(iii) $v\mapsto[v]_h^{v_o}$ is order
preserving (fixed $v_o$) and affine in $v_o$ (fixed $v$).

\begin{theorem}\label{thm:obstacle-regular}
Under assumptions \eqref{eq:obstacle-regular-data}--\eqref{eq:obstacle-regular-obstacle}
there is a unique strong solution $u\in g+L^\infty(0,T;H^1_0(\Omega,\gamma_d))$,
$\partial_tu\in L^2(\Omega_T,\gamma_d)$, and it solves the obstacle problem
in the sense of Definition~\ref{def:obstacle-solution}, with
\begin{align}\nonumber
 \frac12\int_{\Omega_T}|\partial_tu|^2\,d\gamma_d\,dt
 +\frac12\sup_{t}\int_\Omega|Du(t)|^2\,d\gamma_d\\ \label{eq:obstacle-regular-energy}
 \leq e^{T}\Bigl[\frac12\int_\Omega|Dg_o|^2\,d\gamma_d
 +\int_{\Omega_T}\bigl(\tfrac12|\partial_t\psi|^2+|\partial_tD\psi|^2\bigr)d\gamma_d\,dt\Bigr].
\end{align}
\end{theorem}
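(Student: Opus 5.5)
We argue by implicit time discretization (Rothe's method), following \cite[Section~4]{BDS2017} with $dx$ replaced by $d\gamma_d$ and $f(\xi)=\frac12|\xi|^2$. Fix $n\in\mathbb N$, set $\delta:=T/n$, $t_k:=k\delta$, and $u_0:=g_o$. Recursively, let $u_k$ minimize
\[
 w\mapsto\int_\Omega\frac12|Dw|^2\,d\gamma_d+\frac1{2\delta}\int_\Omega|w-u_{k-1}|^2\,d\gamma_d
\]
over the closed convex set $\{w\in H^1_{g(t_k)}(\Omega,\gamma_d):w\geq\psi(t_k)\}$. This set is nonempty, since it contains $g(t_k)$ by \cite[Rmk.~4.2]{BDS2017}. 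The functional is strictly convex, coercive by \eqref{eq:weighted-poincare}, and weakly lower semicontinuous, so $u_k$ exists and is unique.

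The Euler--Lagrange inequality reads
\[
 \int_\Omega\Bigl[\frac{u_k-u_{k-1}}{\delta}(w-u_k)+Du_k\cdot D(w-u_k)\Bigr]d\gamma_d\geq0
\]
for all admissible $w$. We want to test with a competitor built from $u_{k-1}$, for instance $w=u_{k-1}+(g(t_k)-g(t_{k-1}))$, modified to respect the constraint. Following \cite{BDS2017}, the cleanest choice is $w:=\max\{u_{k-1}+\psi(t_k)-\psi(t_{k-1}),\psi(t_k)\}$, corrected so that its boundary values match $g(t_k)$; here we use $\psi=g$ on the lateral boundary from \eqref{eq:obstacle-regular-obstacle}. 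This test gives the discrete energy estimate
\[
 \frac{\delta}{2}\sum_{k\le m}\Bigl\|\frac{u_k-u_{k-1}}{\delta}\Bigr\|_H^2+\frac12\|Du_m\|_H^2\leq(1+\delta)^m\Bigl[\frac12\|Dg_o\|_H^2+\sum_k\delta\bigl(\tfrac12\|\partial_t\psi\|^2+\|\partial_tD\psi\|^2\bigr)\Bigr],
\]
where the time differences of $\psi$ are written as averages of $\partial_t\psi$. A discrete Gronwall step then produces the factor $e^T$ in \eqref{eq:obstacle-regular-energy}. We expect this step to be the main obstacle: the truncated competitor must remain admissible, and the Gaussian weight must not interfere. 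The latter is harmless, because Gaussian integration by parts is never needed here; only the Dirichlet form and $H$-norms appear.

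Next we pass to the limit. Let $u^{(n)}$ be the piecewise constant interpolant and $\tilde u^{(n)}$ the piecewise linear one. The uniform bounds give $\tilde u^{(n)}\rightharpoonup u$ weakly in $W^{1,2}(0,T;H)$ and weakly-$*$ in $g+L^\infty(0,T;V)$, with $u^{(n)}-\tilde u^{(n)}\to0$ in $L^2(\Omega_T)$. Hence $u\in C([0,T];H)$, $u(0)=g_o$, and $u\geq\psi$, the last because the constraint set is closed and convex. The discrete Euler--Lagrange inequality, summed against $w=v(t_k)$ for a comparison map $v$, gives a time-discrete version of \eqref{eq:obstacle-variational-inequality}. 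In the limit we use weak lower semicontinuity of $\int|Du|^2$ and of $\|(v-u)(\tau)\|_H^2$. We also use $\langle\partial_tu,v-u\rangle=\langle\partial_tv,v-u\rangle-\frac12\frac{d}{dt}\|v-u\|^2_H$, which is legitimate because $\partial_tu\in L^2(\Omega_T,\gamma_d)$. This yields the strong-solution inequality, and the same argument works for every $\tau$. The estimate \eqref{eq:obstacle-regular-energy} passes to the limit by lower semicontinuity.

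For the weak formulation of Definition~\ref{def:obstacle-solution}, where $v$ has only $\partial_tv\in L^2(0,\tau;V^*)$, we approximate. Let $v_h:=[v]_h^{v(0)}$ be the Landes mollification, with properties (i)--(iii). Then $v_h\geq\psi$ up to a correction, since order preservation needs $\psi$ regular in time; we replace $v_h$ by $\max\{v_h,\psi\}$, whose time derivative lies in $L^2$ because $\partial_t\psi\in L^2$. We let $h\downarrow0$ using $\partial_tu\in L^2$ to move the time derivative as above.

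Uniqueness of strong solutions follows from the midpoint argument of Proposition~\ref{prop:var-uniqueness-ou}. Since strong solutions have $\partial_tu_i\in L^2(\Omega_T,\gamma_d)$, the midpoint $w=\frac{u_1+u_2}2$ is directly admissible, because $w\geq\psi$ by convexity. Adding the two inequalities and using strict convexity of the quadratic energy forces $u_1(\tau)=u_2(\tau)$ for every $\tau$.
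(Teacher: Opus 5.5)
Your Rothe scheme and energy estimate are the paper's. In fact your truncated competitor is exactly the paper's test function $u_{k-1}+\psi(t_k)-\psi(t_{k-1})$. Since $u_{k-1}\geq\psi(t_{k-1})$, the $\max$ is redundant, and $\psi-g\in L^2(0,T;H^1_0(\Omega,\gamma_d))$ already gives the lateral values $g(t_k)$. So the step you expected to be hardest is routine.

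\textbf{The variational inequality.} The genuine gap is here: you cannot test the $k$-th Euler--Lagrange inequality with $w=v(t_k)$. A comparison map only satisfies $v\in L^2(0,T;H^1(\Omega,\gamma_d))$ and $\partial_tv\in L^2(\Omega_T,\gamma_d)$. Hence $v(t_k)$ exists in $H$ but in general not in $H^1_{g(t_k)}(\Omega,\gamma_d)$, so it is not admissible. Moreover, Riemann sums such as $\sum_k\delta\int_\Omega Du_k\cdot Dv(t_k)\,d\gamma_d$ have no reason to converge to $\int_{\Omega_T}Du\cdot Dv\,d\gamma_d\,dt$.

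The missing device is the paper's, from \cite[Section~4.1.5]{BDS2017}. The piecewise constant interpolant $u^{(h)}$ minimizes the time-integrated functional $F^{(h)}$ among maps lying above the piecewise constant obstacle $\psi^{(h)}$. One tests with $v+\psi^{(h)}-\psi$. This map lies above $\psi^{(h)}$ because $v\geq\psi$, and it has lateral values $g^{(h)}$. It differs from $v$ by a term vanishing in $L^2(0,T;H^1(\Omega,\gamma_d))$, because $\partial_t\psi\in L^2(0,T;H^1(\Omega,\gamma_d))$.

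With this in place, your shortcut is sound. The scheme is causal, so the same argument runs on $(0,\tau)$ for each $\tau$. This replaces the cutoff-in-time localization of \cite[Section~3.3.1]{BDS2017} that the paper invokes.

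Your passage to comparison maps with only $\partial_tv\in L^2(0,\tau;V^*)$ is also right in spirit. Lions--Magenes for $v-u$ moves the time derivative onto $u$, after which no derivative of $v$ remains and density suffices. But the approximants should be $\psi+[v-\psi]_h^{0}$, not $\max\{[v]_h^{v(0)},\psi\}$. Since $v(0)$ is only in $H$, the term $e^{-t/h}v(0)$ is not in $L^2(0,\tau;H^1(\Omega,\gamma_d))$ and has the wrong lateral trace.

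\textbf{Uniqueness.} Your argument assumes that strong solutions satisfy $\partial_tu_i\in L^2(\Omega_T,\gamma_d)$. Definition~\ref{def:strong-obs} only asks for $u\in C([0,T];H)\cap X_g$. As written, you therefore get uniqueness only inside the regular class you constructed.

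To cover arbitrary strong solutions you must, as the paper does, regularize the midpoint $w=\frac12(u_1+u_2)$ in time. You then need the one-sided bound $\limsup_{h\downarrow0}\int_{\Omega_T}\partial_tw_h\,(w_h-w)\,d\gamma_d\,dt\leq0$. In the obstacle setting the regularization must also keep the competitor above $\psi$, which $g+R_h(w-g)$ need not do. The choice $w_h:=\psi+[w-\psi]_h^{g_o-\psi_o}$ works: it stays above $\psi$ by order preservation, lies in $X_g$, and has $w_h(0)=g_o$. It satisfies $\partial_tw_h\,(w_h-w)=\partial_t\psi\,(w_h-w)-\frac1h|w_h-w|^2$, which gives the bound because $\partial_t\psi\in L^2(\Omega_T,\gamma_d)$.

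Finally, strong solutions provide the inequality only at $\tau=T$. So the conclusion comes from
$\int_{\Omega_T}\bigl(|Dw|^2-\frac12|Du_1|^2-\frac12|Du_2|^2\bigr)\,d\gamma_d\,dt=-\frac14\int_{\Omega_T}|D(u_1-u_2)|^2\,d\gamma_d\,dt$
together with the Poincar\'e inequality \eqref{eq:weighted-poincare}, not from every $\tau$.
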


\begin{proof} We proceed following the steps of \cite[Thm 4.1]{BDS2017}.\\
$\bullet$ \emph{Existence and the energy bound} follow 
with $g_i:=g(ih)$, $\psi_i:=\psi(ih)$, $u_0:=g_o$, let $u_i$
minimize $$\mathcal F_i[w]:=\int_\Omega\frac12|Dw|^2\,d\gamma_d+\frac1{2h}\int_\Omega|w-u_{i-1}|^2\,d\gamma_d$$
over $w\geq\psi_i$, $w-g_i\in H^1_0(\Omega,\gamma_d)$, strictly convex,
coercive by \eqref{eq:weighted-poincare}. We then test against
$u_{i-1}+\psi_i-\psi_{i-1}$. Since
$f(\xi)=\frac12|\xi|^2$ is quadratic, no analogue of \cite{BDS2017}'s
Lipschitz estimate is needed, only Young's inequality on the cross
term $\int_\Omega Du_{i-1}\cdot(D\psi_i-D\psi_{i-1})\,d\gamma_d$.
Summing and
iterating as in \cite[Section~4.1.2]{BDS2017}, we get the discrete bound, and Aubin--Lions
compactness (as in \cite[Section~4.1.3]{BDS2017}) gives a limit $u$ satisfying
\eqref{eq:obstacle-regular-energy} and $u\geq\psi$.\\
$\bullet$ \emph{The variational inequality.} $u^{(h)}$ (piecewise-constant
interpolant) minimizes $$F^{(h)}[w]:=\int_{\Omega_T}(\frac1{2h}|w(t)-u^{(h)}(t-h)|^2+\frac12|Dw|^2)\,d\gamma_d\,dt.$$
Since it is quadratic in $w$, its minimizer is characterized
directly by the vanishing one-sided Gateaux derivative, replacing
\cite{BDS2017}'s convex-combination-and-$s\downarrow0$ device (Section~4.1.4),
needed there only because $F^{(h)}$ is merely convex for general
$p$-growth. Testing against $v+\psi^{(h)}-\psi$ and passing to the limit
$h\downarrow0$ exactly as in \cite[Section~4.1.5]{BDS2017} gives
\eqref{eq:obstacle-variational-inequality} on $[0,T]$, i.e. $u$ is a strong
solution.\\
$\bullet$ \emph{Localization} to Definition~\ref{def:obstacle-solution}, i.e. every $\tau$, not just $\tau=T$, is \cite[Section~3.3.1]{BDS2017}. For
$\tau<T$, $\theta\downarrow0$, the competitor
$\xi_\theta v+(1-\xi_\theta)(\psi+[u-\psi]_h^{g_o-\psi_o})$, which is admissible by
order-preservation, yields, after \cite{BDS2017}'s five-term
$\theta\downarrow0$ decomposition 
and $h\downarrow0$,
every error term vanishing via $[u-\psi]_h^{g_o-\psi_o}\to u-\psi$ in
$C([0,T];H)$ except one integral comparing $D[u-\psi]_h+D\psi$ to
$Du$, where quadratic $f$ again lets Cauchy--Schwarz, i.e.
\begin{multline*}
\Bigl|\int_{\Omega\times(\tau,T)}\Bigl[\frac12|Du+Dw_h|^2-\frac12|Du|^2\Bigr]\,d\gamma_d\,dt\Bigr|\\
\leq\|Dw_h\|_{L^2(\Omega\times(\tau,T),\gamma_d)}
\bigl(\|Du\|_{L^2(\Omega_T,\gamma_d)}+\|Dw_h\|_{L^2(\Omega\times(\tau,T),\gamma_d)}\bigr)\to 0,
\end{multline*}
with $Dw_h\to0$ in $L^2(\Omega\times(\tau,T),\gamma_d)$, replace
\cite{BDS2017}'s Lipschitz estimate.\\
$\bullet$ \emph{Uniqueness}. Let us take $w:=\frac12(u_1+u_2)$, $w_h:=g+R_h(w-g)$, according to Lemma~\ref{lem:time-regularization}, with  zero initial value since $w(\cdot,0)=g_o$. 
Adding the two inequalities tested against $w_h$ and letting $h\downarrow0$, as in Proposition~\ref{prop:var-uniqueness-ou}, using
\eqref{eq:bdm-affine-one-sided} and convexity of $\frac12|\xi|^2$, we infer
$u_1=u_2$.
\end{proof}

We also note that, exactly as \cite[Lemma~2.4]{BDS2017} in the special case
$f=\frac12|\xi|^2$, we have the following.\\ For $u$ solving the obstacle problem
(Definition~\ref{def:obstacle-solution}) and any admissible $v$ with
$\partial_tv\in L^2(\Omega_T,\gamma_d)$,
\begin{align}\nonumber
 \sup_t\|u(t)\|_H^2+\int_{\Omega_T}|Du|^2\,d\gamma_d\,dt\\ \label{eq:refined-energy}
\leq8\Bigl(\int_0^T\|\partial_tv\|_H\,dt\Bigr)^2+4\int_{\Omega_T}|Dv|^2\,d\gamma_d\,dt
 +2\sup_t\|v(t)\|_H^2+4\|v(\cdot,0)-g_o\|_H^2,
\end{align}
via $\int_{\Omega_T}\partial_tv(v-u)\,d\gamma_d\,dt\leq\frac14\sup_t\|(v-u)(t)\|_H^2
+(\int_0^T\|\partial_tv\|_H\,dt)^2$. Observe that this uses the $L^1(0,T;H)$-norm of
$\partial_tv$, essential below.

\subsection{Existence and uniqueness of solutions of the obstacle problem}\label{subsec:obstacle-general-regular}

Dropping the requirement that $\psi,g$ coincide near $\partial\Omega$
(keeping \eqref{eq:obstacle-regular-data}--\eqref{eq:obstacle-regular-obstacle}
otherwise, $\psi\leq g$), we can proceed exactly as in \cite[Section~4.2]{BDS2017}.\\
Let $\zeta_\varepsilon$ be the standard cutoff of
$\Omega_\varepsilon:=\{\operatorname{dist}(\cdot,\partial\Omega)>\varepsilon\}$ and
$\psi_\varepsilon:=\zeta_\varepsilon\psi+(1-\zeta_\varepsilon)g$. Then, Theorem~\ref{thm:obstacle-regular}
gives $u_\varepsilon$ solving the obstacle problem with data $(\psi_\varepsilon,g)$.
Testing by $v=g$ and \eqref{eq:refined-energy}, we obtain a uniform bound for $u_\varepsilon$. We now take advantage of the boundary-strip Poincar\'e estimate
$$\int_{(\Omega\setminus\Omega_\varepsilon)\times(0,T)}|w|^2\,d\gamma_d\,dt
\leq c\varepsilon^2\int_{(\Omega\setminus\Omega_\varepsilon)\times(0,T)}|Dw|^2\,d\gamma_d\,dt$$
for $w\in H^1_0(\Omega,\gamma_d)$ --- the unweighted case is
\cite[Lemma~4.3]{BDS2017}, transported by \eqref{eq:norm-equivalence} ---
which gives $Dv_\varepsilon\to Dv$ for $v_\varepsilon:=\zeta_\varepsilon v+(1-\zeta_\varepsilon)g$, with
$v$ a fixed comparison map. Finally, passing to the limit $\varepsilon\downarrow0$ as
in \cite[Section~4.2]{BDS2017}, it follows

\begin{theorem}\label{thm:obstacle-general-regular}
Let $g$ satisfy \eqref{eq:obstacle-regular-data} and let $\psi\leq g$
satisfy \eqref{eq:obstacle-regular-obstacle} except possibly on
$\partial\Omega\times(0,T)$. Then the obstacle problem
(Definition~\ref{def:obstacle-solution}) has a solution
$u\in L^\infty(0,T;H)\cap(g+L^2(0,T;H^1_0(\Omega,\gamma_d)))$.
\end{theorem}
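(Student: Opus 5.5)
The plan is to approximate the obstacle near the lateral boundary, solve the regularized problems with Theorem~\ref{thm:obstacle-regular}, and pass to the limit $\varepsilon\downarrow0$, following \cite[Section~4.2]{BDS2017}.

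\textbf{Approximate problems.} With $\zeta_\varepsilon\in C_c^\infty(\Omega)$ a cutoff equal to $1$ on $\Omega_{2\varepsilon}$, vanishing outside $\Omega_\varepsilon$, and $|D\zeta_\varepsilon|\le c/\varepsilon$, set
\[
\psi_\varepsilon:=\zeta_\varepsilon\psi+(1-\zeta_\varepsilon)g .
\]
Then $\psi_\varepsilon\le g$ and $\psi_\varepsilon-g=\zeta_\varepsilon(\psi-g)\in L^2(0,T;H^1_0(\Omega,\gamma_d))$. Its time derivatives inherit the regularity in \eqref{eq:obstacle-regular-obstacle}, and the initial value satisfies $\psi_{\varepsilon}(0)\in g_o+H^1_0$. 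So $(\psi_\varepsilon,g)$ satisfies \eqref{eq:obstacle-regular-data}--\eqref{eq:obstacle-regular-obstacle}, and Theorem~\ref{thm:obstacle-regular} yields solutions $u_\varepsilon\in C([0,T];H)$ in the sense of Definition~\ref{def:obstacle-solution}. Since $g\ge\psi_\varepsilon$ is admissible with $\partial_tg\in L^2(\Omega_T,\gamma_d)$, the refined estimate \eqref{eq:refined-energy} with $v=g$ gives the uniform bound
\[
\sup_t\|u_\varepsilon(t)\|_H^2+\int_{\Omega_T}|Du_\varepsilon|^2\,d\gamma_d\,dt\le C(g),
\]
independent of $\varepsilon$. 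Extracting a subsequence, $u_\varepsilon\rightharpoonup u$ weakly in $L^2(0,T;H^1(\Omega,\gamma_d))$ and weakly$^*$ in $L^\infty(0,T;H)$, with $u-g\in L^2(0,T;V)$ because this affine set is weakly closed. Since $\psi_\varepsilon\to\psi$ a.e. and $u_\varepsilon\ge\psi_\varepsilon$, the limit satisfies $u\ge\psi$; the constraint passes to the limit weakly because $\{w\ge\psi_\varepsilon\}$ is convex and closed and $\psi_\varepsilon\to\psi$ in $L^2$.

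\textbf{Passing to the limit in the inequality.} Fix $v\in K_{\psi,g}$ with $\partial_tv\in L^2(0,T;V^*)$, and use $v_\varepsilon:=\zeta_\varepsilon v+(1-\zeta_\varepsilon)g$. This is admissible, since $v_\varepsilon\ge\zeta_\varepsilon\psi+(1-\zeta_\varepsilon)g=\psi_\varepsilon$, and $\partial_tv_\varepsilon$ lies in the right class. We have
\[
Dv_\varepsilon-Dv=(1-\zeta_\varepsilon)(Dg-Dv)+D\zeta_\varepsilon(v-g),
\]
where $v-g\in L^2(0,T;V)$. The boundary-strip Poincar\'e estimate makes the last term $O(1)$ times $\|D(v-g)\|_{L^2(\text{strip})}\to0$, so $Dv_\varepsilon\to Dv$ strongly in $L^2$. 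The time term $\int\pair{\partial_tv_\varepsilon}{v_\varepsilon-u_\varepsilon}$ converges using the weak convergence of $u_\varepsilon$ and the strong convergence of $v_\varepsilon$. The energy term of $u_\varepsilon$ is handled by weak lower semicontinuity, $\liminf\int|Du_\varepsilon|^2\ge\int|Du|^2$, which has the right sign. The terminal term $\frac12\|(v-u_\varepsilon)(\tau)\|_H^2$ is the delicate point, because there is no strong compactness in time. I would integrate the inequality over $\tau\in(\tau_1,\tau_2)$, or test with a cutoff in time, so that only $L^2(\Omega_T)$ norms appear and weak lower semicontinuity applies. This yields the inequality for a.e. $\tau$. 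A representative in $C([0,T];H)$, together with the inequality for every $\tau$, is then obtained only a posteriori, which is exactly why Theorem~\ref{thm:obstacle-general-regular} asserts only $u\in L^\infty(0,T;H)$.

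\textbf{Main obstacle.} The main difficulty is controlling the cutoff gradient term $D\zeta_\varepsilon(v-g)$ and, jointly, the terminal term without time compactness. I expect the strip Poincar\'e inequality to settle the former. If the pointwise-in-$\tau$ version fails for the latter, the fallback is the time-integrated formulation just described.
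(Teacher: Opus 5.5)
Your proposal is correct and follows the paper's route. It uses the same boundary-modified obstacle $\psi_\varepsilon=\zeta_\varepsilon\psi+(1-\zeta_\varepsilon)g$ solved via Theorem~\ref{thm:obstacle-regular}, the uniform bound from \eqref{eq:refined-energy} with $v=g$, the comparison maps $v_\varepsilon=\zeta_\varepsilon v+(1-\zeta_\varepsilon)g$ with the boundary-strip Poincar\'e inequality giving $Dv_\varepsilon\to Dv$, and then the limit $\varepsilon\downarrow0$. The paper leaves that limit to \cite[Section~4.2]{BDS2017}, whereas you make the time-averaged treatment of the terminal term explicit, which yields the inequality for a.e.\ $\tau$ and matches the $L^\infty(0,T;H)$ conclusion.
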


To reach \eqref{eq:obstacle-sharp-hypothesis}, i.e. $g_o,\psi_o$ in
$H$, \cite[Section~4.3]{BDS2017} mollify in time with a space-mollified initial
value. However, 
our initial value's
gradient necessarily blows up, requiring:

\begin{lemma}\label{lem:mollified-seed}
For $g_o\in H$ (extended by $0$) and a standard mollifier $\varphi$, set
$g_{o,\varepsilon}:=g_o*\varphi_\varepsilon$. Then $g_{o,\varepsilon}\to g_o$ in
$H$ and $\|Dg_{o,\varepsilon}\|_{L^2(\Omega,\gamma_d)}\leq C\varepsilon^{-1}\|g_o\|_H$.
Consequently  $$\|e^{-t/h}Dg_{o,\varepsilon}\|_{L^2(\Omega_T,\gamma_d)}^2\leq
\frac{C^2}2(h\varepsilon^{-2})\|g_o\|_H^2\to0$$ whenever $h\varepsilon^{-2}\to0$,
e.g.\ $\varepsilon=\varepsilon(h):=h^{1/4}$.
\end{lemma}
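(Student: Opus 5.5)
The lemma consists of three elementary estimates, and I would prove them in order, treating $g_o\in H=L^2(\Omega,\gamma_d)$ as extended by zero to $\mathbb R^d$. By \eqref{eq:norm-equivalence}, $g_o\in L^2(\Omega,dx)$ with $\|g_o\|_{L^2(dx)}\le m_\Omega^{-1/2}\|g_o\|_H$, so its zero extension lies in $L^2(\mathbb R^d,dx)$. Hence every unweighted mollification fact is available, and we can return to $\gamma_d$ at the end at the cost of the constant $M_\Omega$.

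\emph{Convergence.} Standard mollifier theory gives $g_o*\varphi_\varepsilon\to g_o$ in $L^2(\mathbb R^d,dx)$. Restricting to $\Omega$ and applying the upper bound in \eqref{eq:norm-equivalence} yields
\[
\|g_{o,\varepsilon}-g_o\|_H\le M_\Omega^{1/2}\|g_{o,\varepsilon}-g_o\|_{L^2(\Omega,dx)}\to0 .
\]

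\emph{Gradient bound.} Write $Dg_{o,\varepsilon}=g_o*D\varphi_\varepsilon$ with $D\varphi_\varepsilon(x)=\varepsilon^{-d-1}(D\varphi)(x/\varepsilon)$, so that $\|D\varphi_\varepsilon\|_{L^1(\mathbb R^d)}=\varepsilon^{-1}\|D\varphi\|_{L^1}$. Young's convolution inequality then gives $\|Dg_{o,\varepsilon}\|_{L^2(\mathbb R^d,dx)}\le\varepsilon^{-1}\|D\varphi\|_{L^1}\|g_o\|_{L^2(dx)}$. Combining both sides of \eqref{eq:norm-equivalence}, we obtain the claim with $C:=(M_\Omega/m_\Omega)^{1/2}\|D\varphi\|_{L^1}$, which depends only on $\Omega$ and $\varphi$.

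\emph{Time-weighted estimate.} Since $Dg_{o,\varepsilon}$ does not depend on $t$,
\[
\|e^{-t/h}Dg_{o,\varepsilon}\|^2_{L^2(\Omega_T,\gamma_d)}=\int_0^Te^{-2t/h}\,dt\,\|Dg_{o,\varepsilon}\|^2_{L^2(\Omega,\gamma_d)}\le\frac h2\,C^2\varepsilon^{-2}\|g_o\|_H^2 ,
\]
which tends to zero whenever $h\varepsilon^{-2}\to0$. With $\varepsilon=h^{1/4}$ we have $h\varepsilon^{-2}=h^{1/2}\to0$.

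There is no real obstacle here. The only point requiring care is the first one: one must work with the unweighted $L^2$ space on $\mathbb R^d$, rather than mollifying directly in $L^2(\gamma_d)$, where translation is not an isometry. The boundedness of $\Omega$ is what makes this transfer harmless.
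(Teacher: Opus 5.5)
Your proof is correct and follows the paper's argument: the identity $Dg_{o,\varepsilon}=g_o*D\varphi_\varepsilon$, Young's inequality with $\|D\varphi_\varepsilon\|_{L^1}=\varepsilon^{-1}\|D\varphi\|_{L^1}$, the norm equivalence \eqref{eq:norm-equivalence} to pass between $dx$ and $d\gamma_d$, and the bound $\int_0^Te^{-2t/h}\,dt\le h/2$. You also spell out the convergence $g_{o,\varepsilon}\to g_o$ in $H$ and give an explicit constant $C=(M_\Omega/m_\Omega)^{1/2}\|D\varphi\|_{L^1}$, both of which the paper leaves implicit.
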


\begin{proof}
We first observe that $Dg_{o,\varepsilon}=g_o*D\varphi_\varepsilon$. Young's convolution inequality
with $\|D\varphi_\varepsilon\|_{L^1}=\varepsilon^{-1}\|D\varphi\|_{L^1}$ and
\eqref{eq:norm-equivalence} give the gradient bound. The vanishing
estimate is direct integration, $\int_0^Te^{-2t/h}\,dt\leq h/2$.
\end{proof}

\begin{theorem}\label{thm:obstacle-existence}
Under the regularity assumptions \eqref{eq:obstacle-sharp-hypothesis}, the obstacle problem for the
Ornstein--Uhlenbeck operator has a solution
$u\in L^\infty(0,T;H)\cap(g+L^2(0,T;H^1_0(\Omega,\gamma_d)))$, in the sense
of Definition~\ref{def:obstacle-solution}.
\end{theorem}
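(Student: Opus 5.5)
We follow \cite[Section~4.3]{BDS2017}: approximate the data by regular data, apply Theorem~\ref{thm:obstacle-general-regular} to the approximations, and pass to the limit in the variational inequality. For $h>0$ let $\varepsilon=\varepsilon(h):=h^{1/4}$ and $g_{o,\varepsilon}$, $\psi_{o,\varepsilon}$ the mollified initial values of Lemma~\ref{lem:mollified-seed}. Define the time-regularized data
\[
 g_h:=[g]_h^{g_{o,\varepsilon}},\qquad \psi_h:=[\psi]_h^{\psi_{o,\varepsilon}}\wedge g_h ,
\]
or, more safely, $\psi_h:=g_h-[g-\psi]_h^{(g_o-\psi_o)_\varepsilon^+}$ to keep $\psi_h\le g_h$ via order preservation (iii). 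By (i) and Lemma~\ref{lem:mollified-seed}, $g_h$ and $\psi_h$ satisfy \eqref{eq:obstacle-regular-data}--\eqref{eq:obstacle-regular-obstacle} except on the lateral boundary. This holds because $\partial_t[v]_h=-\frac1h([v]_h-v)\in L^2(0,T;H^1)$ and the seed lies in $H^1(\Omega,\gamma_d)$. Theorem~\ref{thm:obstacle-general-regular} then yields solutions $u_h$ of the obstacle problem with data $(\psi_h,g_h)$.

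\textbf{Uniform bounds.} Apply \eqref{eq:refined-energy} with comparison map $v=g_h$. What matters is that the bound involves only $\|\partial_tg_h\|_{L^1(0,T;H)}$, $\|Dg_h\|_{L^2}$, $\sup_t\|g_h\|_H$ and $\|g_{o,\varepsilon}-g_o\|_H$. Writing $g_h=[g]_h^{g_o}+e^{-t/h}(g_{o,\varepsilon}-g_o)$, property (ii) controls the first part, since $\partial_tg\in L^2(\Omega_T,\gamma_d)$. For the second part, $\|\partial_t(e^{-t/h}(g_{o,\varepsilon}-g_o))\|_{L^1(0,T;H)}\le\|g_{o,\varepsilon}-g_o\|_H\to0$, and its gradient tends to $0$ in $L^2$ by Lemma~\ref{lem:mollified-seed}. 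This is exactly why the $L^1$-in-time norm is essential: the $L^2(0,T;H)$ norm of this term is of order $h^{-1/2}$ and would blow up. We obtain bounds on $u_h$ in $L^\infty(0,T;H)\cap L^2(0,T;H^1)$, uniform in $h$, so a subsequence converges weakly in $L^2(0,T;H^1)$ and weak$^*$ in $L^\infty(0,T;H)$ to some $u\in g+L^2(0,T;V)$ with $u\ge\psi$, since $\psi_h\to\psi$ in $L^2$.

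\textbf{Limit in the inequality.} Fix an admissible $v\in K_{\psi,g}(\Omega_\tau)$ with $\partial_tv\in L^2(0,\tau;V^*)$. Build the approximating comparison map $v_h:=v+(g_h-g)+(\psi_h-\psi)^+$, or an analogous shift, which lies in $K_{\psi_h,g_h}$ and converges to $v$ strongly with $\partial_t v_h$ controlled. Insert $v_h$ into the inequality for $u_h$. The energy term passes to the limit by weak lower semicontinuity of $\int|Du|^2$, which enters with a minus sign, so $\liminf$ is the correct direction. The pairing $\int\langle\partial_tv_h,v_h-u_h\rangle$ converges by the weak convergence of $u_h$.

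\textbf{Main obstacle.} The endpoint term $\frac12\|(v-u)(\tau)\|_H^2$ is the hard step, because without compactness in time we cannot evaluate $u$ at a single $\tau$. We would integrate the inequality over $\tau\in(\tau_1,\tau_2)$ and use weak lower semicontinuity of $\int_{\tau_1}^{\tau_2}\|(v-u_h)(\tau)\|_H^2d\tau$. This gives the inequality for a.e.\ $\tau$, which is the form that Definition~\ref{def:obstacle-solution} can accommodate without $C([0,T];H)$ (continuity is postponed to the uniqueness theorem). The initial term $\|v(0)-g_{o,\varepsilon}\|_H\to\|v(0)-g_o\|_H$ is handled directly.
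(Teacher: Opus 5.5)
\textbf{Gap: the comparison map is not admissible.} Your uniform bounds agree with the paper's: you use \eqref{eq:refined-energy} with comparison map $g_h$, split off the initial layer and use only its $L^1(0,T;H)$-norm. Your weak limits and $u\ge\psi$ also agree. Your a.e.-$\tau$ treatment of the endpoint term is a legitimate way to handle a point the paper passes over.

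The problem is the comparison map. Under \eqref{eq:obstacle-sharp-hypothesis} the obstacle has no time regularity at all. Hence $v_h:=v+(g_h-g)+(\psi_h-\psi)^+$ is not admissible:
\begin{itemize}
\item $(\psi_h-\psi)^+$ has no time derivative, so $\partial_tv_h\notin L^2(0,\tau;V^*)$ in general. Asking for it amounts to the extra hypothesis $\partial_t\psi\in L^2(\Omega_T,\gamma_d)$ of Theorem~\ref{thm:obstacle-uniqueness}, which the existence result is meant to avoid.
\item $(\psi_h-\psi)^+$ need not vanish on $\partial\Omega\times(0,\tau)$, so $v_h-g_h\notin L^2(0,\tau;V)$ in general.
\item For the same reason $\psi_o=\psi(\cdot,0)$ is undefined. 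The seeds $\psi_{o,\varepsilon}$ and $(g_o-\psi_o)^+_\varepsilon$ in both your definitions of $\psi_h$ therefore make no sense.
\end{itemize}
The last point is easily fixed, as in the paper: give $\psi$ the same seed as $g$, $\psi_h:=[\psi]_h^{g_{o,\varepsilon}}$, so that $g_h-\psi_h=[g-\psi]_h^0\ge 0$.

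\textbf{Missing idea: regularize the comparison map itself.} Use the same Landes mollifier and the same seed, $v_h:=[v]_h^{g_{o,\varepsilon}}$. Order preservation gives $v_h-\psi_h=[v-\psi]_h^0\ge0$ and $v_h-g_h=[v-g]_h^0\in L^2(0,\tau;V)$. Moreover $\partial_tv_h=-\frac{1}{h}(v_h-v)$ needs no time derivative of $\psi$.

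This changes your limit passage, however. Now $v_h(0)=g_{o,\varepsilon}=u_h(0)$, so the initial term of the approximate inequality vanishes. At the same time $\partial_tv_h$ contains the layer $-\frac{1}{h}e^{-t/h}(g_{o,\varepsilon}-v(\cdot,0))$, whose $L^1(0,\tau;H)$-norm tends to $\|g_o-v(\cdot,0)\|_H\neq0$. Consequently $\int_0^\tau\langle\partial_tv_h,v_h-u_h\rangle\,dt$ does not converge to $\int_0^\tau\langle\partial_tv,v-u\rangle\,dt$ by weak convergence of $u_h$, as you claim.

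To handle this, split $v_h=[v]_h^{v(\cdot,0)}+e^{-t/h}(g_{o,\varepsilon}-v(\cdot,0))$. Then show that $u_h$ stays uniformly close to $g_{o,\varepsilon}$ near $t=0$, e.g.\ from the localized inequality with comparison map $g_h$. The layer then contributes an additional $+\frac12\|v(\cdot,0)-g_o\|_H^2$ in the limit, cf.\ \cite[Section~4.3.3]{BDS2017}. This is how the initial-discrepancy term of \eqref{eq:obstacle-variational-inequality} is recovered.
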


\begin{proof}
Fix $h_j\downarrow0$, $\varepsilon_j:=h_j^{1/4}$, $g_{o,j}:=g_{o,\varepsilon_j}$
(see Lemma~\ref{lem:mollified-seed}). Set $g_j:=[g]_{h_j}^{g_{o,j}}$, $\psi_j:=[\psi]_{h_j}^{g_{o,j}}$, having with common initial value
$g_{o,j}$.
The shared initial value gives
$$g_j-\psi_j=\frac1{h_j}\int_0^te^{(s-t)/h_j}(g-\psi)(s)\,ds\geq0$$
directly. For fixed $j$, $(\psi_j,g_j)$ satisfies
\eqref{eq:obstacle-regular-data}--\eqref{eq:obstacle-regular-obstacle},
so Theorem~\ref{thm:obstacle-general-regular}
gives $u_j$.\\
\noindent $\bullet$ \emph{Uniform bound}.
We write $g_j=\tilde g_j+e^{-t/h_j}(g_o-g_{o,j})$ with
$\tilde g_j:=[g]_{h_j}^{g_o}$, having matching initial value. Then
\begin{align*}
    \|\partial_t\tilde g_j\|_{L^2(0,T;H)}\leq\|\partial_tg\|_{L^2(\Omega_T,\gamma_d)},\\
    \int_0^T\|\partial_t(e^{-t/h_j}(g_o-g_{o,j}))\|_H\,dt\leq\|g_o-g_{o,j}\|_H, 
    \end{align*}
so $\int_0^T\|\partial_tg_j\|_H\,dt$ is bounded
uniformly. 
Similarly
$$Dg_j=e^{-t/h_j}Dg_{o,j}+\frac1{h_j}\int_0^te^{(s-t)/h_j}Dg(s)\,ds\to Dg$$ in
$L^2(\Omega_T,\gamma_d)$ by Lemma~\ref{lem:mollified-seed} (the residual
vanishes since $h_j\varepsilon_j^{-2}=h_j^{1/2}\to0$), and identically
$D\psi_j\to D\psi$ (same residual, shared initial value). By
\eqref{eq:refined-energy} with $v=g_j$, $u_j$ is bounded uniformly in $j$.\\
$\bullet$ \emph{Passage to the limit}. We have $u_j\rightharpoonup u$ weakly in
$L^2(0,T;H^1(\Omega,\gamma_d))$, weakly-$*$ in $L^\infty(0,T;H)$. Moreover,
$u\geq\psi$ since $\psi_j\to\psi$.\\ 
For $v\in K_{\psi,g}(\Omega_\tau)$, let
$v_j:=[v]_{h_j}^{g_{o,j}}\geq\psi_j$, 
then $Dv_j\to Dv$ as
above. Decomposing $$\partial_tv_j(v_j-u_j)=\partial_t\tilde v_j(v_j-u_j)
+\partial_t(e^{-t/h_j}(v(\cdot,0)-g_{o,j}))(v_j-u_j),$$
with $\tilde v_j:=[v]_{h_j}^{v(\cdot,0)}$, the computation of
\cite[Section~4.3.3, (4.28)--(4.30)]{BDS2017}, adapted to the Gaussian case,
gives
$$\lim_j\int_{\Omega_\tau}\partial_tv_j(v_j-u_j)\,d\gamma_d\,dt
 =\int_{\Omega_\tau}\partial_tv(v-u)\,d\gamma_d\,dt+\frac12\|v(\cdot,0)-g_o\|_H^2.$$
Combined with $Dv_j\to Dv$, this gives
\eqref{eq:obstacle-variational-inequality} for $u,v$.
\end{proof}


\begin{theorem}\label{thm:obstacle-uniqueness}
If, in addition, $\partial_t\psi\in L^2(\Omega_T,\gamma_d)$, the solution of
Theorem~\ref{thm:obstacle-existence} is unique.
\end{theorem}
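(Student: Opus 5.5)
The plan is to use the midpoint argument of Proposition~\ref{prop:var-uniqueness-ou}. The new difficulty comes from the obstacle: the comparison map must stay above $\psi$. Solutions given by Theorem~\ref{thm:obstacle-existence} are a priori only in $L^\infty(0,T;H)$. So the first step is to show that, under $\partial_t\psi\in L^2(\Omega_T,\gamma_d)$, every solution $u$ in the sense of Definition~\ref{def:obstacle-solution} has a representative in $C([0,T];H)$ with $u(\cdot,0)=g_o$. The argument follows \cite[Section~3]{BDS2017}. Test the inequality at level $\tau$ with the admissible competitor
\[
 v_h:=\psi+[u-\psi]_h^{g_o-\psi_o}.
\]
By order preservation, (iii), we have $v_h\geq\psi$. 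By (i), $\partial_t v_h=\partial_t\psi-\frac1h(v_h-u)$, and $\partial_t\psi\in L^2(0,T;H)$ is exactly what makes this lie in $L^2(0,\tau;V^*)$. The term $-\frac1h\int\|v_h-u\|_H^2$ is nonpositive, and $\int\partial_t\psi\,(v_h-u)\to0$ because $v_h\to u$ in $L^2(0,T;H^1)$. Hence
\[
 \|(v_h-u)(\tau)\|_H\to0 \quad\text{for a.e.\ }\tau,
\]
uniformly after a Cauchy-type comparison of $v_h$ and $v_{h'}$. Since each $v_h$ is continuous into $H$, this yields continuity of $u$ and the initial condition. We then extend the inequality to all $\tau\in(0,T]$ by lower semicontinuity of $\tau\mapsto\|(v-u)(\tau)\|_H^2$.

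Next, let $u_1,u_2$ be two such solutions and set $w:=\frac12(u_1+u_2)$. Then $w\geq\psi$, $w\in X_g\cap C([0,T];H)$ and $w(\cdot,0)=g_o$. Instead of $g+R_h(w-g)$, which need not lie above $\psi$, I will use the obstacle-adapted regularization
\[
 w_h:=\psi+[w-\psi]_h^{g_o-\psi_o}\in K_{\psi,g}(\Omega_\tau).
\]
Its time derivative lies in $L^2(0,\tau;V^*)$, again because $\partial_t\psi\in L^2(0,T;H)$. Moreover $w_h\to w$ in $L^2(0,\tau;H^1(\Omega,\gamma_d))$ and in $C([0,\tau];H)$; the latter uses continuity of $w$ and matching initial data. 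I will also check that $w_h-g$ has zero trace, since $\psi$ and $[\cdot]_h$ preserve the boundary class up to terms with zero trace; this needs a brief comment. The key one-sided estimate is
\[
 \int_0^\tau\pair{\partial_t w_h}{w_h-w}\,dt
 =-\frac1h\int_0^\tau\|w_h-w\|_H^2\,dt+\int_0^\tau\int_\Omega\partial_t\psi\,(w_h-w)\,d\gamma_d\,dt .
\]
The first term is nonpositive and the second tends to $0$, so the $\limsup$ is $\leq0$, as in \eqref{eq:bdm-affine-one-sided}.

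Finally, test both inequalities with $v=w_h$ and add them. Pass $h\downarrow0$ using $Dw_h\to Dw$ in $L^2$ and $w_h(\cdot,0)=g_o$, so the initial terms vanish. This gives
\[
 \int_0^\tau\!\!\int_\Omega\Bigl(|Dw|^2-\tfrac12|Du_1|^2-\tfrac12|Du_2|^2\Bigr)d\gamma_d\,dt\geq\tfrac12\sum_{i=1}^2\|(w-u_i)(\tau)\|_H^2 .
\]
The left side equals $-\frac14\int_0^\tau\!\int_\Omega|Du_1-Du_2|^2\,d\gamma_d\,dt\leq0$, so $u_1(\tau)=u_2(\tau)$ for every $\tau$.

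I expect the main obstacle to be the first step: establishing the $C([0,T];H)$ representative and the initial value, with enough uniformity in $h$. The uniqueness computation itself is routine once $w$ is known to be continuous with $w(0)=g_o$, because that is what makes $w_h\to w$ in $C([0,\tau];H)$ and kills the endpoint terms. For this step I would transcribe \cite[Lemma~3.? / Section~3.2]{BDS2017} with $f=\frac12|\xi|^2$.
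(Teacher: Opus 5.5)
There is a genuine gap. Under \eqref{eq:obstacle-sharp-hypothesis} your obstacle-shifted competitors $v_h=\psi+[u-\psi]_h^{g_o-\psi_o}$ and $w_h=\psi+[w-\psi]_h^{g_o-\psi_o}$ are in general not admissible, so neither your continuity step nor your midpoint step can be run with them. There are two separate defects.

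\begin{itemize}
\item \emph{Lateral boundary values.} By linearity of the mollification,
\[
v_h-g=[u-g]_h^{0}+\bigl([g-\psi]_h^{g_o-\psi_o}-(g-\psi)\bigr).
\]
The bracket has no reason to vanish on $\partial\Omega\times(0,\tau)$. Its trace would be $[\operatorname{tr}(g-\psi)]_h-\operatorname{tr}(g-\psi)$, which is nonzero unless $g-\psi$ has zero (or time-independent) lateral trace. The compatibility $\psi\in g+L^2(0,T;H^1_0(\Omega,\gamma_d))$ belongs to the regular-data setting \eqref{eq:obstacle-regular-obstacle}. The paper does use $\psi+[u-\psi]_h^{g_o-\psi_o}$ there, for the localization step, but the compatibility is deliberately dropped from Theorem~\ref{thm:obstacle-general-regular} onwards; the sharp hypothesis only gives $\psi\le g$.
\item \emph{Regularity of the seed.} The term $e^{-t/h}(g_o-\psi_o)$ lies in $L^2(0,\tau;H^1(\Omega,\gamma_d))$ only if $g_o-\psi_o\in H^1(\Omega,\gamma_d)$. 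Here $g_o,\psi_o$ are merely in $H$; this is the same defect that forces Lemma~\ref{lem:mollified-seed} in the existence proof.
\end{itemize}

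So $v_h,w_h\notin X_g(\Omega_\tau)$ in general, and the ``brief comment'' you planned on the trace would be false.

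The paper's fix, following \cite[Lemmas~3.2--3.3]{BDS2017}, is to regularize the difference from the boundary datum with zero seed and then truncate: $v_h:=\max\{\psi,\,g+[u-g]_h^{0}\}$, and likewise with $w=\frac12(u_1+u_2)$ in place of $u$. This competitor is admissible:
\begin{itemize}
\item since $\psi\le g$, $v_h-g=\max\{\psi-g,[u-g]_h^0\}$ has zero lateral trace;
\item $v_h\ge\psi$ and $v_h(0)=\max\{\psi_o,g_o\}=g_o$;
\item $\partial_tv_h\in L^2(\Omega_\tau,\gamma_d)$ because $\partial_t\psi,\partial_tg\in L^2$, which is exactly where the extra hypothesis on $\psi$ enters.
\end{itemize}
For the one-sided estimate, split into two sets. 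On $\{\psi>g+[u-g]_h^0\}$ one has $\partial_tv_h=\partial_t\psi$ and, since $u\ge\psi$, $0\le u-v_h\le (u-g)-[u-g]_h^0$. On the complement, $\partial_tv_h=\partial_tg-\frac1h\bigl([u-g]_h^0-(u-g)\bigr)$ and $v_h-u=[u-g]_h^0-(u-g)$. Hence, with all norms in $L^2(\Omega_\tau,\gamma_d)$,
\[
\int_0^\tau\!\int_\Omega\partial_tv_h(v_h-u)\,d\gamma_d\,dt\le\bigl(\|\partial_t\psi\|+\|\partial_tg\|\bigr)\|[u-g]_h^0-(u-g)\|\to0 .
\]
Moreover $Dv_h\to Du$, by continuity of $f\mapsto f^+$ in $L^2(0,\tau;H^1)$ applied to $v_h=\psi+(g+[u-g]_h^0-\psi)^+$.

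The resulting bound on $\|(v_h-u)(\tau)\|_H$ is uniform in $\tau$. This gives the $C([0,T];H)$ representative directly, so no Cauchy comparison in $h$ is needed. With this competitor the rest of your argument goes through, including the identity $|Dw|^2-\frac12|Du_1|^2-\frac12|Du_2|^2=-\frac14|Du_1-Du_2|^2$.
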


\begin{proof}
This works analogously to \cite[Lemmas~3.2--3.3]{BDS2017}. Indeed, $v=g$ and \eqref{eq:refined-energy}
give $u_1,u_2\in L^\infty(0,T;H)\cap L^2(0,T;H^1(\Omega,\gamma_d))$. The
comparison map $v_h:=\max\{\psi,[u-g]_h+g\}$ 
upgrades both to $C([0,T];H)$.
We then apply the same construction to $w:=\frac12(u_1+u_2)$ and argue as in
the uniqueness part of Theorem~\ref{thm:obstacle-regular}. The
truncation handled by Lipschitz continuity of $\max\{\cdot,\cdot\}$,
translated via \eqref{eq:norm-equivalence}, gives $u_1=u_2$.
\end{proof}

\begin{proof}[Proof of Theorem~\ref{thm:intro-obstacle}]
This follows by combining Theorem~\ref{thm:obstacle-existence} (existence)
with Theorem~\ref{thm:obstacle-uniqueness} (uniqueness under the additional
hypothesis $\partial_t\psi\in L^2(\Omega_T,\gamma_d)$).
\end{proof}



\section{More general operators in Gaussian divergence form}\label{sec:general-operators}

This section proves Theorem~\ref{thm:intro-general-p}.  We retain the notation
and hypotheses \ref{hyp:p-car}--\ref{hyp:p-derivative} from the introduction,
together with the evolution triple \eqref{eq:p-triple} and the datum assumption
\eqref{eq:p-data-intro}.  In particular, $1<p<\infty$ and
$p\geq2d/(d+2)$.


\subsection{The weighted first variation}

For $u\in W^{1,p}(\Omega,\gamma_d;\mathbb R^N)$, set
\[
  \mathcal F_{\gamma_d}(u)
  :=\int_\Omega f(x,u,Du)\,d\gamma_d.
\]
The upper bound in \eqref{eq:standard-p-growth} ensures that this energy is
finite on $W^{1,p}(\Omega,\gamma_d;\mathbb R^N)$.  If $u$ and $\phi$ are
smooth and $\phi$ has compact support in $\Omega$, then
\begin{equation}\label{eq:first-variation-p}
\begin{aligned}
  \left.\frac{d}{d\varepsilon}\right|_{\varepsilon=0}
     \mathcal F_{\gamma_d}(u+\varepsilon\phi)
  &=\int_\Omega\bigl[D_uf(x,u,Du)\cdot\phi
       +D_\xi f(x,u,Du)\cdot D\phi\bigr]d\gamma_d\\
  &=\int_\Omega\bigl[D_uf(x,u,Du)
       -\divg D_\xi f(x,u,Du)\bigr]\cdot\phi\,d\gamma_d.
\end{aligned}
\end{equation}
Thus the formal negative $L^2(\Omega,\gamma_d;\mathbb R^N)$-gradient flow of
$\mathcal F_{\gamma_d}$ is precisely \eqref{eq:p-main-gaussian-intro}.
Equivalently, in Lebesgue variables,
\[
  \partial_tu-\operatorname{div}_xD_\xi f(x,u,Du)
  +x\cdot D_\xi f(x,u,Du)+D_uf(x,u,Du)=0,
\]
where the contraction with $x$ and the divergence are taken row-wise.

\subsection[Distributional and energy weak formulations]{\textcolor{black}{Distributional and energy weak formulations}}
The derivative bound \eqref{eq:first-derivative-growth} implies, for every
$u\in L^p(0,T;W^{1,p}(\Omega,\gamma_d;\mathbb R^N))$,
\[
  D_\xi f(x,u,Du),\ D_uf(x,u,Du)
  \in L^{p'}(\Omega_T,\gamma_d\otimes dt).
\]
Consequently the spatial part of the equation defines an element of
$L^{p'}(0,T;V_p^*)$.

\begin{proposition}[Distributional solutions are energy weak solutions]
\label{prop:p-distributional-energy}
Suppose that
\[
  u\in L^p\bigl(0,T;W^{1,p}_g(\Omega,\gamma_d;\mathbb R^N)\bigr)
       \cap C([0,T];H),
  \qquad u(\cdot,0)=g(\cdot,0),
\]
and that
\begin{equation}\label{eq:p-distributional}
  \int_{\Omega_T}u\cdot\partial_t\varphi\,d\gamma_d\,dt
  -\int_{\Omega_T}D_\xi f(x,u,Du)\cdot D\varphi\,d\gamma_d\,dt
  -\int_{\Omega_T}D_uf(x,u,Du)\cdot\varphi\,d\gamma_d\,dt=0
\end{equation}
for every $\varphi\in C_c^\infty(\Omega_T;\mathbb R^N)$.  Then
$\partial_tu\in L^{p'}(0,T;V_p^*)$ and $u$ is an energy weak solution in the
sense of Definition~\ref{def:p-weak-intro}.  Conversely, every energy weak
solution satisfies \eqref{eq:p-distributional}.
\end{proposition}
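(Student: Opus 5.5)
The argument mirrors Proposition~\ref{prop:distributional-energy-ou-fixed}, with the quadratic flux replaced by $D_\xi f(x,u,Du)$ and an additional zero-order term $D_uf(x,u,Du)$. Since $u-g\in L^p(0,T;V_p)$ and $g\in L^p(0,T;W^{1,p}(\Omega,\gamma_d;\mathbb R^N))$, we have $u,Du\in L^p(\Omega_T,\gamma_d\otimes dt)$. The derivative bound \eqref{eq:first-derivative-growth} then gives
\[
 A:=D_\xi f(x,u,Du)\in L^{p'}(\Omega_T,\gamma_d\otimes dt;\mathbb R^{N\times d}),
 \qquad
 b:=D_uf(x,u,Du)\in L^{p'}(\Omega_T,\gamma_d\otimes dt;\mathbb R^N),
\]
because $|u|^{(p-1)p'}=|u|^p$. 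Measurability of these compositions follows from the Carath\'eodory property of $f$: the derivatives are limits of difference quotients of a Carath\'eodory function, so they are Carath\'eodory themselves.

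Define $F(t)\in V_p^*$ by
\[
 \pair{F(t)}{\phi}_{V_p}:=-\int_\Omega A(t):D\phi\,d\gamma_d-\int_\Omega b(t)\cdot\phi\,d\gamma_d .
\]
By H\"older's inequality, $\|F(t)\|_{V_p^*}\le\|A(t)\|_{L^{p'}}+C\|b(t)\|_{L^{p'}}$. Here the zero-order term is controlled through $\|\phi\|_{L^p}\le\|\phi\|_{W^{1,p}}$, so no Poincar\'e inequality is needed. It follows that $F\in L^{p'}(0,T;V_p^*)$, and strong measurability holds because $V_p^*$ is separable and $F$ is weakly measurable.

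Next, test \eqref{eq:p-distributional} with $\varphi=\eta(t)\phi(x)$, where $\eta\in C_c^\infty(0,T)$ and $\phi\in C_c^\infty(\Omega;\mathbb R^N)$. This gives
\[
 -\int_0^T\eta'(t)\,(u(t),\phi)_H\,dt=\int_0^T\eta(t)\pair{F(t)}{\phi}_{V_p}\,dt .
\]
Both sides are continuous in $\phi$ with respect to the $W^{1,p}$ norm: the left side because $V_p\hookrightarrow H$ (Proposition~\ref{propembedding}(iv)), the right side because $F\in L^{p'}(0,T;V_p^*)$. Hence the identity extends to all $\phi\in V_p$ by density of $C_c^\infty$ in $V_p$. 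Viewing $u$ as a $V_p^*$-valued function through $H\hookrightarrow V_p^*$, this says exactly that $\partial_tu=F$ in $\mathcal D'(0,T;V_p^*)$, so $\partial_tu\in L^{p'}(0,T;V_p^*)$.

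Then \eqref{eq:p-weak} holds for test functions $\eta(t)\phi$, hence for their finite sums. These sums are dense in $L^p(0,T;V_p)$, and both sides of \eqref{eq:p-weak} are continuous on $L^p(0,T;V_p)$, so \eqref{eq:p-weak} holds for every $\phi\in L^p(0,T;V_p)$. The remaining requirements of Definition~\ref{def:p-weak-intro}, namely $u\in C([0,T];H)$ and $u(\cdot,0)=g(\cdot,0)$, are part of the hypotheses. For the converse, take $\phi=\varphi\in C_c^\infty(\Omega_T;\mathbb R^N)$ in \eqref{eq:p-weak}. Because $\varphi$ has compact support in time,
\[
 \int_0^T\pair{\partial_tu}{\varphi}_{V_p}\,dt=-\int_{\Omega_T}u\cdot\partial_t\varphi\,d\gamma_d\,dt ,
\]
which is the definition of the $V_p^*$-valued distributional derivative combined with the identification of the pairing with the $H$ inner product for $u(t)\in H$. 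Substituting this into \eqref{eq:p-weak} gives \eqref{eq:p-distributional}.

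The main point needing care is the identification of the distributional time derivative, which in turn rests on the density of $C_c^\infty(\Omega;\mathbb R^N)$ in $V_p$ and the embedding $V_p\hookrightarrow H$. It is here that the restriction $p\ge 2d/(d+2)$ is used.
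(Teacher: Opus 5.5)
Your proof is correct and follows the same route as the paper. You define $F\in L^{p'}(0,T;V_p^*)$ from $D_\xi f(x,u,Du)$ and $D_uf(x,u,Du)$, identify $\partial_tu=F$ in $\mathcal D'(0,T;V_p^*)$ by testing with $\eta(t)\phi(x)$ and using density of $C_c^\infty(\Omega;\mathbb R^N)$ in $V_p$, and get the converse by integrating by parts in time; you also supply details the paper leaves implicit (measurability, the $L^{p'}$ bounds, and where $V_p\hookrightarrow H$ enters), though your final density step is unnecessary, since once $\partial_tu=F$ in $L^{p'}(0,T;V_p^*)$ the identity \eqref{eq:p-weak} holds for every $\phi\in L^p(0,T;V_p)$ directly from the definition of $F$.
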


\begin{proof}
Define $F\in L^{p'}(0,T;V_p^*)$ by
\[
  \pair{F(t)}{\phi}_{V_p}
  :=-\int_\Omega D_\xi f(x,u,Du)\cdot D\phi\,d\gamma_d
    -\int_\Omega D_uf(x,u,Du)\cdot\phi\,d\gamma_d.
\]
The distributional identity says that $\partial_tu=F$ in
$\mathcal D'(0,T;V_p^*)$.  Hence $\partial_tu\in L^{p'}(0,T;V_p^*)$, and
\eqref{eq:p-weak} follows by density.  The converse is immediate by testing
\eqref{eq:p-weak} with compactly supported smooth functions and integrating by
parts in time.
\end{proof}

\subsection[Existence and the weak-to-variational implication]{\textcolor{black}{Existence and the weak-to-variational implication}}

\begin{theorem}[Energy weak well-posedness under standard $p$-growth]
\label{thm:p-var-existence}
Suppose that the integrand $ f:\Om\times\R^N\times\R^{N\times d}\longrightarrow [0,\infty)$ satisfies \ref{hyp:p-car}--\ref{hyp:p-derivative} and that $g$ is as in \eqref{eq:p-data-intro}. Then problem
\eqref{eq:p-main-gaussian-intro} has a unique energy weak solution.
\end{theorem}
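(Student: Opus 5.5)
Following the proof of Proposition~\ref{prop:weak-existence-nonhomogeneous}, we reduce to zero lateral and initial data by writing $u=w+g$ with $w\in L^p(0,T;V_p)$, $w(\cdot,0)=0$. The energy weak formulation \eqref{eq:p-weak} becomes the abstract Cauchy problem
\[
  \partial_tw+\mathcal A(t)w=-\partial_tg\quad\text{in }L^{p'}(0,T;V_p^*),\qquad w(0)=0,
\]
where
\[
  \pair{\mathcal A(t)w}{\phi}_{V_p}:=\int_\Omega\bigl[D_\xi f(x,w+g(t),Dw+Dg(t))\cdot D\phi+D_uf(x,w+g(t),Dw+Dg(t))\cdot\phi\bigr]\,d\gamma_d .
\]
Existence then comes from the theory of monotone operators in the evolution triple $V_p\hookrightarrow H\hookrightarrow V_p^*$ (Lions' theorem for time-dependent pseudomonotone, here monotone hemicontinuous, operators; cf.\ Lions \cite{Lions1969}). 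We check its hypotheses in order.

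\emph{Measurability and boundedness.} By \ref{hyp:p-derivative} and the Carath\'eodory property, $t\mapsto\mathcal A(t)w(t)$ is measurable and
\[
  \|\mathcal A(t)w\|_{V_p^*}\le C\bigl(1+\|w\|^{p-1}_{W^{1,p}}+\|g(t)\|^{p-1}_{W^{1,p}}\bigr),
\]
with $\|g(\cdot)\|_{W^{1,p}}\in L^p(0,T)$.

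\emph{Monotonicity.} Since $(u,\xi)\mapsto f(x,u,\xi)$ is convex and $C^1$ by \ref{hyp:p-car}, its gradient $(D_uf,D_\xi f)$ is a monotone map on $\mathbb R^N\times\mathbb R^{N\times d}$. Integrating against $d\gamma_d$ gives $\pair{\mathcal A(t)w_1-\mathcal A(t)w_2}{w_1-w_2}\ge0$.

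\emph{Hemicontinuity.} This follows from continuity of $D f$ in $(u,\xi)$ together with dominated convergence, using the growth bound \eqref{eq:first-derivative-growth}.

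\emph{Coercivity.} This step needs the most care, since only $f$ itself, and not $Df$, carries a lower bound. We use convexity in the form
\[
  \pair{\mathcal A(t)w}{w}\ge\mathcal F_{\gamma_d}(w+g)-\mathcal F_{\gamma_d}(g).
\]
Then \eqref{eq:standard-p-growth} gives
\[
  \mathcal F_{\gamma_d}(w+g)\ge\nu\|Dw+Dg\|_p^p-\|a\|_{L^1(\gamma_d)}
\]
and
\[
  \mathcal F_{\gamma_d}(g)\le L\bigl(\gamma_d(\Omega)+\|g\|_p^p+\|Dg\|_p^p\bigr).
\]
By the weighted Poincar\'e inequality \eqref{eq:weighted-poincare} this yields
\[
  \pair{\mathcal A(t)w}{w}\ge c\|w\|_{V_p}^p-k(t),\qquad k\in L^1(0,T),
\]
which is the required coercivity. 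The right-hand side $-\partial_tg$ lies in $L^{p'}(0,T;V_p^*)$ by \eqref{eq:p-data-intro}.

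The abstract theorem then yields $w\in L^p(0,T;V_p)$ with $\partial_tw\in L^{p'}(0,T;V_p^*)$. The Lions--Magenes proposition \eqref{eq:lions-magenes} gives $w\in C([0,T];H)$. Hence $u=w+g$ lies in the class \eqref{eq:p-weak-class} and satisfies \eqref{eq:p-weak}, after extending the test functions by density to all of $L^p(0,T;V_p)$.

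\emph{Uniqueness.} Let $u_1,u_2$ be two energy weak solutions. Their difference $u_1-u_2\in L^p(0,T;V_p)$ has time derivative in $L^{p'}(0,T;V_p^*)$ and vanishes at $t=0$. Subtract the two identities \eqref{eq:p-weak} and test with $\phi=(u_1-u_2)\mathbf 1_{(0,\tau)}$. By \eqref{eq:lions-magenes},
\[
  \tfrac12\|(u_1-u_2)(\tau)\|_H^2+\int_0^\tau\pair{\mathcal A u_1-\mathcal A u_2}{u_1-u_2}\,dt=0,
\]
where the integral is nonnegative by monotonicity. Hence $u_1=u_2$.

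The main obstacle is the coercivity estimate with a nonhomogeneous lateral datum and only the lower bound on $f$. The convexity trick above is the route I would try first. If the abstract theorem requires coercivity uniform in $t$, the fallback is a Galerkin scheme in $V_p$, using the same a priori estimate together with Minty's trick, which monotonicity makes available.
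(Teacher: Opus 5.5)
Your proposal is correct and follows essentially the same route as the paper. Both arguments shift by $g$, check measurability, growth, hemicontinuity, monotonicity and coercivity of the shifted operator, apply Lions' theorem for monotone operators in $V_p\hookrightarrow H\hookrightarrow V_p^*$, and obtain uniqueness from monotonicity together with \eqref{eq:lions-magenes}. The only variation is in the coercivity step: you apply the supporting-hyperplane inequality at $(g(t),Dg(t))$ instead of at $(0,0)$ as the paper does, which gives $\pair{\mathcal A(t)w}{w}_{V_p}\ge\mathcal F_{\gamma_d}(w+g(t))-\mathcal F_{\gamma_d}(g(t))$ directly and avoids the Young-inequality absorption of the cross terms $|D_uf|\,|g|+|D_\xi f|\,|Dg|$. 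Since the standard theorem already allows $k\in L^1(0,T)$ in the coercivity bound, your Galerkin fallback is not needed.
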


\begin{proof}
We use the standard monotone-operator theorem in the Hilbert triple
$V_p\hookrightarrow H\hookrightarrow V_p^*$.  Set $w:=u-g$.  For a.e.
$t\in(0,T)$ define $A(t):V_p\to V_p^*$ by
\begin{equation}\label{eq:operator-A}
\begin{aligned}
   \pair{A(t)w}{\phi}_{V_p}
   &:=\int_\Omega D_\xi f(x,w+g(t),Dw+Dg(t))\cdot D\phi\,d\gamma_d \\
   &\quad +\int_\Omega D_u f(x,w+g(t),Dw+Dg(t))\,\phi\,d\gamma_d .
\end{aligned}
\end{equation}
By the derivative growth assumption \eqref{eq:first-derivative-growth}, H\"older's inequality and the continuous embedding
$V_p\hookrightarrow L^p(\Omega,\gamma_d;\mathbb R^N)$,
\begin{equation}\label{eq:operator-growth}
   \|A(t)w\|_{V_p^*}
   \le C\Bigl(1+\|w\|_{V_p}^{p-1}
        +\|g(t)\|_{W^{1,p}(\Omega,\gamma_d)}^{p-1}\Bigr).
\end{equation}
By the Caratheodory assumption, $t\mapsto A_g(t)w$ is measurable for each fixed $w\in V_p$. The map $A(t)$ is hemicontinuous because $f$ is $C^1$ in $(y,\xi)$, and it is
monotone by convexity: if $u_j=w_j+g(t)$, then
\begin{equation}\label{eq:operator-monotone}
\begin{aligned}
   &\pair{A(t)w_1-A(t)w_2}{w_1-w_2}_{V_p} \\
   &=\int_\Omega\Bigl[D_u f(x,u_1,Du_1)-D_u f(x,u_2,Du_2)\Bigr]
        \,(u_1-u_2)d\gamma_d \\
   &\quad+\int_\Omega\Bigl[D_\xi f(x,u_1,Du_1)-D_\xi f(x,u_2,Du_2)\Bigr]
        \cdot(Du_1-Du_2)d\gamma_d\ge0.
\end{aligned}
\end{equation}
It remains to verify coercivity in the variable $w$.  Put $u=w+g(t)$.  By convexity at $(0,0)$,
\[
   D_u f(x,u,Du)\, u+D_\xi f(x,u,Du)\cdot Du
      \ge f(x,u,Du)-f(x,0,0).
\]
Therefore
\begin{align*}
  \pair{A_g(t)w}{w}_{V_p^*,V_p}
  &=\int_\Omega\Bigl[D_uf(x,u,Du)\,(u-g(t))
     +D_\xi f(x,u,Du)\cdot(Du-Dg(t))\Bigr]d\gamma_d \\
  &\ge \int_\Omega f(x,u,Du)\,d\gamma_d
     -\int_\Omega f(x,0,0)\,d\gamma_d \\
  &\quad-
    \int_\Omega\Bigl(|D_uf(x,u,Du)|\,|g(t)|
     +|D_\xi f(x,u,Du)|\,|Dg(t)|\Bigr)d\gamma_d .
\end{align*}
Using \eqref{eq:standard-p-growth}, \eqref{eq:first-derivative-growth}, Young's inequality, and the Poincar\'{e} inequality \eqref{eq:weighted-poincare} for $w\in V_p$, we obtain, for every sufficiently small $\delta>0$,
\[
  \pair{A_g(t)w}{w}_{V_p}
  \ge c\norm{w}_{V_p}^p
  -C\left(1+\norm{g(t)}_{W^{1,p}(\Omega,\gamma_d)}^p+\norm{b}_{L^1(\Omega,\gamma_d)}\right).
\]
Indeed, the terms involving $g(t)$ and $Dg(t)$ are bounded by
\[
  \delta\norm{w}_{V_p}^p
  +C_\delta\left(1+\norm{g(t)}_{W^{1,p}(\Omega,\gamma_d)}^p\right),
\]
and $\delta$ is chosen small enough to absorb the first term into the coercive part.

The shifted problem is
\begin{equation}\label{eq:w-problem-p-fixed}
  \partial_tw+A_g(t)w=-\partial_tg
  \quad\text{in }L^{p'}(0,T;V_p^*),
  \qquad w(\cdot,0)=0.
\end{equation}
The standard Lions theorem for coercive, hemicontinuous, monotone operators gives
\[
   w\in L^p(0,T;V_p)\cap C([0,T];H),
   \qquad \Dt w\in L^{p'}(0,T;V_p^*).
\]
Hence $u=w+g$ is an energy weak solution of \eqref{eq:p-main-gaussian-intro}. Uniqueness follows from the monotonicity in
\eqref{eq:operator-monotone}.

\end{proof}

\begin{proposition}[Energy weak solutions are variational solutions]
\label{prop:p-weak-to-var}
Every energy weak solution of \eqref{eq:p-main-gaussian-intro} is a variational
solution in the sense of Definition~\ref{def:p-var-intro}.
\end{proposition}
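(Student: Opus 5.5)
The plan mirrors the proof of Proposition~\ref{prop:weak-implies-var}, with the quadratic convexity inequality replaced by the convexity of $f$ in the joint variable $(u,\xi)$ from \ref{hyp:p-car}. Fix $\tau\in(0,T]$ and let $u$ be an energy weak solution, so \eqref{eq:p-weak} holds. Restricting to $(0,\tau)$ gives, for every $\phi\in L^p(0,\tau;V_p)$,
\[
 \int_0^\tau\pair{\partial_tu}{\phi}_{V_p}\,dt
 +\int_0^\tau\int_\Omega\bigl[D_\xi f(x,u,Du)\cdot D\phi+D_uf(x,u,Du)\cdot\phi\bigr]d\gamma_d\,dt=0 .
\]
Such test functions are obtained by multiplying by $\mathbf 1_{(0,\tau)}$ and extending by zero. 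Every term is finite: the derivative bound \eqref{eq:first-derivative-growth} puts $D_\xi f(x,u,Du)$ and $D_uf(x,u,Du)$ in $L^{p'}(\Omega_T,\gamma_d\otimes dt)$.

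Next I use convexity. Since $(u,\xi)\mapsto f(x,u,\xi)$ is convex and $C^1$, the supporting-hyperplane inequality gives, a.e.\ in $\Omega_\tau$,
\[
 f(x,u+\phi,Du+D\phi)-f(x,u,Du)\ \ge\ D_uf(x,u,Du)\cdot\phi+D_\xi f(x,u,Du)\cdot D\phi .
\]
Now let $v$ be an arbitrary comparison map as in \eqref{eq:p-comparison-class} and set $\phi:=v-u$. Both $v-g$ and $u-g$ lie in $L^p(0,\tau;V_p)$, so $\phi\in L^p(0,\tau;V_p)$ is admissible. The energies $f(x,v,Dv)$ and $f(x,u,Du)$ are integrable by the upper bound in \eqref{eq:standard-p-growth}, because $V_p\hookrightarrow L^p$. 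Combining the two displays yields
\[
 \int_0^\tau\pair{\partial_tu}{v-u}_{V_p}\,dt+\int_0^\tau\int_\Omega\bigl[f(x,v,Dv)-f(x,u,Du)\bigr]d\gamma_d\,dt\ \ge 0 .
\]
Adding and subtracting $\int_0^\tau\pair{\partial_tv}{v-u}_{V_p}\,dt$ turns this into \eqref{eq:p-variational-intro}, provided
\[
 \int_0^\tau\pair{\partial_t(v-u)}{v-u}_{V_p}\,dt=\tfrac12\|(v-u)(\tau)\|_H^2-\tfrac12\|(v-u)(0)\|_H^2 ,
\]
with $(v-u)(0)=v(\cdot,0)-g(\cdot,0)$.

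The main (though mild) point is justifying this energy identity, which is where the affine boundary data enter. Here $v-u=(v-g)-(u-g)$ lies in $L^p(0,\tau;V_p)$, and $\partial_t(v-u)=\partial_tv-\partial_tu\in L^{p'}(0,\tau;V_p^*)$, because both $\partial_tv$ and $\partial_tu$ are in that space by \eqref{eq:p-comparison-class} and \eqref{eq:p-weak-class}. So the Lions--Magenes proposition \eqref{eq:lions-magenes} applies to $w=v-u$ on $[0,\tau]$. It gives a representative in $C([0,\tau];H)$ and exactly the required identity.

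It remains to check that this representative matches the continuous representatives used in the definition. The function $u$ is in $C([0,T];H)$ by assumption, and $v$ is continuous into $H$ via $v-g$, as explained after the Lions--Magenes proposition. Their difference is therefore continuous and agrees with the Lions--Magenes representative. Evaluating at $t=0$ with $u(\cdot,0)=g(\cdot,0)$ gives the initial term, which completes the argument.
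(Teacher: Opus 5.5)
Your proposal is correct and follows the paper's proof essentially step for step: you test the energy weak formulation with $v-u$ on $(0,\tau)$, apply the supporting-hyperplane inequality for $f$ convex in the pair $(u,\xi)$, and close with the Lions--Magenes identity for $v-u$ together with $\partial_t v=\partial_t u+\partial_t(v-u)$. Your extra remarks (extending the test function by zero, integrability of the energies, and identifying the Lions--Magenes representative of $v-u$ with the difference of the given continuous representatives) are details the paper leaves implicit.
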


\begin{proof}
Let $u$ be an energy weak solution and let $v$ be an admissible comparison map on $(0,\tau)$.  Since $v-u\in L^p(0,\tau;V_p)$, we may test \eqref{eq:p-weak} by $v-u$ on $(0,\tau)$.  This gives
\begin{equation}\label{eq:test-v-minus-u}
\begin{aligned}
   \int_0^\tau\pair{\Dt u}{v-u}_{V_p}\,dt
   &+\int_0^\tau\int_\Omega D_\xi f(x,u,Du)\cdot D(v-u)d\gam\,dt \\
   &+\int_0^\tau\int_\Omega D_u f(x,u,Du)(v-u)d\gam\,dt=0 .
\end{aligned}
\end{equation}
By convexity in the pair $(u,\xi)$ yields
\begin{equation}\label{eq:full-convexity-p}
\begin{aligned}
   f(x,v,Dv)-f(x,u,Du)
   &\ge D_u f(x,u,Du)\,(v-u)  \\
   &\quad +D_\xi f(x,u,Du)\cdot D(v-u).
\end{aligned}
\end{equation}
Combining \eqref{eq:test-v-minus-u} and \eqref{eq:full-convexity-p}, we find
\[
   \int_0^\tau\pair{\Dt u}{v-u}_{V_p}\,dt
   +\int_0^\tau\int_\Omega\bigl[f(x,v,Dv)-f(x,u,Du)\bigr]d\gam\,dt\ge0.
\]
Moreover $v-u\in L^p(0,\tau;V_p)$ and $\Dt(v-u)\in L^{p'}(0,\tau;V_p^*)$.  Hence the Lions--Magenes identity \eqref{eq:lions-magenes} in the triple $V_p\hookrightarrow H\hookrightarrow V_p^*$ gives
\[
   \int_0^\tau\pair{\Dt(v-u)}{v-u}_{V_p}\,dt
   =\frac12\|(v-u)(\cdot,\tau)\|_H^2-\frac12\|v(\cdot,0)-g(\cdot,0)\|_H^2 .
\]
Adding this identity and using
$\Dt v=\Dt u+\Dt(v-u)$ proves \eqref{eq:p-variational-intro}.
\end{proof}

\subsection{Uniqueness in the variational class}

\begin{theorem}[Uniqueness of variational solutions under standard $p$-growth]\label{thm:p-var-uniqueness}
Suppose that the integrand $ f:\Om\times\R^N\times\R^{N\times d}\longrightarrow [0,\infty)$ satisfies \ref{hyp:p-car}--\ref{hyp:p-derivative} and that $g$ is as in \eqref{eq:p-data-intro}. Then the variational solution
of \eqref{eq:p-main-gaussian-intro} is unique. 
\end{theorem}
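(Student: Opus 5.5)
The proof will copy the regularized-midpoint argument of Proposition~\ref{prop:var-uniqueness-ou}, with $f(x,u,Du)$ in place of $\frac12|Du|^2$.

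Let $u_1,u_2$ be two variational solutions in the sense of Definition~\ref{def:p-var-intro}, fix $\tau\in(0,T]$, and set $w:=\frac12(u_1+u_2)$. Then $w-g\in L^p(0,\tau;V_p)\cap C([0,\tau];H)$ and $(w-g)(0)=0$. Moreover $\partial_tg\in L^{p'}(0,\tau;V_p^*)$ by \eqref{eq:p-data-intro}.

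\textbf{Step 1: regularize.} Lemma~\ref{lem:time-regularization} applies with $Z=V_p$ and $r=p$. It gives $w_h:=g+R_h(w-g)$, which is an admissible comparison map in \eqref{eq:p-comparison-class} with $w_h(0)=g(0)$. It also gives
\[
w_h\to w \text{ in } L^p(0,\tau;W^{1,p})\cap C([0,\tau];H),
\qquad
\limsup_{h\downarrow0}\int_0^\tau\pair{\partial_tw_h}{w_h-w}_{V_p}\,dt\le0 .
\]

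\textbf{Step 2: add the two inequalities.} Test the variational inequality for $u_i$ with $v=w_h$. The initial terms vanish. Since $w_h-u_1+w_h-u_2=2(w_h-w)$, adding the two inequalities gives
\[
2\int_0^\tau\pair{\partial_tw_h}{w_h-w}_{V_p}dt+2\int_0^\tau\!\!\int_\Omega f(x,w_h,Dw_h)\,d\gamma_d\,dt-\sum_{i=1}^2\int_0^\tau\!\!\int_\Omega f(x,u_i,Du_i)\,d\gamma_d\,dt\ge\frac12\sum_{i=1}^2\|(w_h-u_i)(\tau)\|_H^2 .
\]

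\textbf{Step 3: pass to the limit $h\downarrow0$.} This is the one point needing more care than in the quadratic case. I need
\[
\int_0^\tau\!\!\int_\Omega f(x,w_h,Dw_h)\,d\gamma_d\,dt\;\to\;\int_0^\tau\!\!\int_\Omega f(x,w,Dw)\,d\gamma_d\,dt .
\]
I would prove this by strong continuity of the Nemytskii functional on $L^p(0,\tau;W^{1,p})$. Pass to a subsequence with $(w_h,Dw_h)\to(w,Dw)$ a.e. and dominated by a fixed $L^p$ function (partial converse of dominated convergence). Then $f(x,w_h,Dw_h)\to f(x,w,Dw)$ a.e., by the Carathéodory property and continuity from \ref{hyp:p-car}. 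The upper bound in \eqref{eq:standard-p-growth}, $0\le f\le L(1+|u|^p+|\xi|^p)$, provides an integrable dominating function. Here $\gamma_d$ is finite on $\Omega$, so constants are integrable, and $W^{1,p}(\Omega,\gamma_d)\subset L^p(\Omega,\gamma_d)$ controls the $|u|^p$ term. Dominated convergence then gives convergence along the subsequence. Since every subsequence has such a further subsequence, the full family converges. (Alternatively, the mean-value theorem with \eqref{eq:first-derivative-growth} and Hölder gives a direct Lipschitz-type estimate.) The pairing term has nonpositive $\limsup$ by Step 1. The right-hand side converges because $w_h(\tau)\to w(\tau)$ in $H$. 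Together these yield
\[
2\int_0^\tau\!\!\int_\Omega f(x,w,Dw)\,d\gamma_d\,dt-\sum_{i=1}^2\int_0^\tau\!\!\int_\Omega f(x,u_i,Du_i)\,d\gamma_d\,dt\ge\frac12\sum_{i=1}^2\|(w-u_i)(\tau)\|_H^2 .
\]

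\textbf{Step 4: conclude by convexity.} By convexity of $f$ in the pair $(u,\xi)$ from \ref{hyp:p-car}, pointwise a.e.
\[
2f\bigl(x,\tfrac{u_1+u_2}{2},\tfrac{Du_1+Du_2}{2}\bigr)\le f(x,u_1,Du_1)+f(x,u_2,Du_2).
\]
Joint convexity is essential here, since $w$ averages both $u$ and $Du$. So the left side of the limit inequality is $\le0$ while the right side is $\ge0$. Hence $\|(u_1-u_2)(\tau)\|_H^2/4=0$. As $\tau$ was arbitrary and $u_1,u_2\in C([0,T];H)$, we conclude $u_1=u_2$.
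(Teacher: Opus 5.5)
Your proof is correct and is essentially the paper's proof. Both use the same midpoint $w=\frac12(u_1+u_2)$, the same regularization $w_h=g+R_h(w-g)$ from Lemma~\ref{lem:time-regularization}, the same addition of the two inequalities tested with $w_h$, and the same conclusion from joint convexity of $f$ in $(u,\xi)$.

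The only difference is how you show $\int_0^\tau\!\int_\Omega f(x,w_h,Dw_h)\,d\gamma_d\,dt\to\int_0^\tau\!\int_\Omega f(x,w,Dw)\,d\gamma_d\,dt$. The paper uses the mean-value formula with \ref{hyp:p-derivative} and H\"older, which is the alternative you mention. Your dominated-convergence argument needs only continuity and the upper bound in \ref{hyp:p-growth}, and it is equally valid.
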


\begin{proof}
Let $u_1$ and $u_2$ be two variational solutions and fix
$\tau\in(0,T]$.  We argue on the interval $(0,\tau)$ and set
\[
   w:=\frac{u_1+u_2}{2}.
\]
Then $w-g\in L^p(0,\tau;V_p)$, $w\in C([0,\tau];H)$ and
$w(\cdot,0)=g(\cdot,0)$ in $H$.  The only point which prevents us from using
$v=w$ directly in the variational inequality is that we do not know that $\partial_t w$ belongs to $L^{p'}(0,T;V^*)$.  For this reason, for $h \in (0,T]$, we introduce the time-regularised functions
\begin{equation}\label{eq:p-std-regularized-comparison}
  w_h:=g+z_h ,\qquad z_h:=R_hz, 
\end{equation}
where
\[
   z:=w-g,
\]
and where $R_h$ was introduced in \eqref{eq:bdm-zero-seed-regularization}. Since $z(\cdot,0)=0$, the regularization is taken with zero initial datum $z_0$.  By
Lemma~\ref{lem:time-regularization}, $w_h$ is an admissible comparison
map, $w_h(\cdot,0)=g(\cdot,0)$ and
\begin{equation}\label{eq:p-std-regularized-convergence}
   w_h\to w
   \quad\text{in }L^p(0,\tau;W^{1,p}(\Omega,\gamma_d;\mathbb R^N))
   \quad\text{and in }C([0,\tau];H).
\end{equation}
Moreover,
\begin{equation}\label{eq:p-std-zh-derivative}
   \partial_t z_h=-\frac1h(z_h-z)
\end{equation}
in $L^p(0,\tau;V_p)$, and hence in $L^{p'}(0,\tau;V_p^*)$ after the usual
embedding through $H$.

We now apply the variational inequality \eqref{eq:p-variational-intro} first to
$u_1$ and then to $u_2$, with the same comparison map $v=w_h$.  Since
$w_h(\cdot,0)=g(\cdot,0)$, the two initial terms vanish.  Adding the two inequalities,
we obtain
\begin{equation}\label{eq:p-std-added}
\begin{aligned}
   &2\int_0^\tau\pair{\Dt w_h}{w_h-w}_{V_p}\,dt
     +2\int_0^\tau\int_\Omega f(x,w_h,Dw_h)\,d\gamma_d\,dt  \\
   &\quad-\sum_{i=1}^2\int_0^\tau\int_\Omega f(x,u_i,Du_i)\,d\gamma_d\,dt  \\
   &\qquad\ge \frac12\sum_{i=1}^2
          \|(w_h-u_i)(\cdot,\tau)\|_H^2 .
\end{aligned}
\end{equation}
We write
\begin{equation}\label{eq:p-std-Ih-IIh}
   I_h:=\int_0^\tau\int_\Omega f(x,w_h,Dw_h)\,d\gamma_d\,dt,
   \qquad
   II_h:=\int_0^\tau\pair{\Dt w_h}{w_h-w}_{V_p}\,dt .
\end{equation}
We treat these two terms separately.

We first focus on $I_h$.  Set
\[
   \Delta_h^0:=w_h-w,
   \qquad
   \Delta_h^1:=Dw_h-Dw.
\]
By the mean-value formula in the variables $(y,\xi)$,
\begin{equation}\label{eq:p-std-mean-value-energy}
\begin{aligned}
   I_h-I
   &=\int_0^1\int_0^\tau\int_\Omega
       D_u f(x,w+s\Delta_h^0,Dw+s\Delta_h^1)\cdot\Delta_h^0
        \,d\gamma_d\,dt\,ds \\
   &\quad+\int_0^1\int_0^\tau\int_\Omega
       D_\xi f(x,w+s\Delta_h^0,Dw+s\Delta_h^1):\Delta_h^1
        \,d\gamma_d\,dt\,ds,
\end{aligned}
\end{equation}
where
\[
   I:=\int_0^\tau\int_\Omega f(x,w,Dw)\,d\gamma_d\,dt .
\]
Using \eqref{eq:first-derivative-growth}, H\"older's inequality, and the
boundedness of $w_h$ and $w$ in $L^p(0,\tau;W^{1,p}(\Omega,\gamma_d))$,
we find
\begin{equation}\label{eq:p-std-energy-estimate}
\begin{aligned}
   |I_h-I|
   &\le C\int_0^\tau\int_\Omega
        \bigl(1+|w_h|^{p-1}+|w|^{p-1}
              +|Dw_h|^{p-1}+|Dw|^{p-1}\bigr)  \\
   &\hspace{4.3cm}\times
        \bigl(|w_h-w|+|Dw_h-Dw|\bigr)\,d\gamma_d\,dt \\
   &\le C\bigl(1+\|w_h\|_{L^p(0,\tau;W^{1,p})}^{p-1}
               +\|w\|_{L^p(0,\tau;W^{1,p})}^{p-1}\bigr) \\
   &\hspace{4.3cm}\times
        \|w_h-w\|_{L^p(0,\tau;W^{1,p})}.
\end{aligned}
\end{equation}
The last factor tends to zero by \eqref{eq:p-std-regularized-convergence}.  Thus
\begin{equation}\label{eq:p-std-Ih-limit}
   I_h\to I
   \qquad\text{as }h\downarrow0.
\end{equation}

We now treat the second term in \eqref{eq:p-std-Ih-IIh}. Since $w_h-w=z_h-z$, we have
\begin{equation}\label{eq:p-std-IIh-split}
\begin{aligned}
   II_h
   &=\int_0^\tau\pair{\partial_t z_h}{z_h-z}_{V_p}\,dt
     +\int_0^\tau\pair{\partial_t g}{z_h-z}_{V_p}\,dt  \\
   &=:II_h^{(1)}+II_h^{(2)} .
\end{aligned}
\end{equation}
By \eqref{eq:bdm-regularization-derivative},
\begin{equation}\label{eq:p-std-IIh1}
   II_h^{(1)}
   =-\frac1h\int_0^\tau \|z_h-z\|_H^2\,dt\le0.
\end{equation}
Moreover, since $\partial_tg\in L^{p'}(0,T;V_p^*)$ and
$z_h\to z$ in $L^p(0,\tau;V_p)$,
\begin{equation}\label{eq:p-std-IIh2}
   |II_h^{(2)}|
   \le \|\partial_tg\|_{L^{p'}(0,\tau;V_p^*)}
        \|z_h-z\|_{L^p(0,\tau;V_p)}\to0 .
\end{equation}
Consequently,
\begin{equation}\label{eq:p-std-IIh-limit}
   \limsup_{h\downarrow0}II_h\le0 .
\end{equation}

We can now let $h\downarrow0$ in \eqref{eq:p-std-added}.  More precisely,
using \eqref{eq:p-std-Ih-limit}, \eqref{eq:p-std-IIh-limit}, and
$w_h(\tau)\to w(\tau)$ in $H$, we obtain
\begin{equation}\label{eq:p-std-midpoint-ineq-detailed}
\begin{aligned}
   &2\int_0^\tau\int_\Omega f(x,w,Dw)\,d\gamma_d\,dt
     -\sum_{i=1}^2
        \int_0^\tau\int_\Omega f(x,u_i,Du_i)\,d\gamma_d\,dt \\
   &\qquad\ge
     \frac12\sum_{i=1}^2\|(w-u_i)(\cdot,\tau)\|_H^2 .
\end{aligned}
\end{equation}
On the other hand, convexity of $(y,\xi)\mapsto f(x,y,\xi)$ gives, for a.e.
$(x,t)$,
\begin{equation}\label{eq:p-std-pointwise-convexity}
   2f(x,w,Dw)
   =2f\left(x,\frac{u_1+u_2}{2},\frac{Du_1+Du_2}{2}\right)
   \le f(x,u_1,Du_1)+f(x,u_2,Du_2).
\end{equation}
After integration, the left-hand side of
\eqref{eq:p-std-midpoint-ineq-detailed} is therefore nonpositive, whereas
the right-hand side is nonnegative.  Hence both sides must vanish, and in
particular
\begin{equation}\label{eq:p-std-terminal-zero}
   \sum_{i=1}^2\|(w-u_i)(\cdot,\tau)\|_H^2=0 .
\end{equation}
Since $w-u_1=(u_2-u_1)/2$ and $w-u_2=(u_1-u_2)/2$, we get
$u_1(\cdot,\tau)=u_2(\cdot,\tau)$ in $H$.  The time $\tau\in(0,T]$ was arbitrary,
and both solutions have the same initial value $g(\cdot,0)$; therefore
$u_1=u_2$ in $C([0,T];H)$ and hence a.e. in $\Omega_T$
\end{proof}

\subsection{Equivalence of the two formulations}

\begin{theorem}[Equivalence under standard $p$-growth]\label{thm:p-standard-existence}
Suppose that the integrand $ f:\Om\times\R^N\times\R^{N\times d}\longrightarrow [0,\infty)$ satisfies \ref{hyp:p-car}--\ref{hyp:p-derivative} and that $g$ is as in \eqref{eq:p-data-intro}. Then
\eqref{eq:p-main-gaussian-intro} has a unique variational solution.  This solution is the
unique weak solution.  Consequently, the weak and variational formulations
are equivalent.
\end{theorem}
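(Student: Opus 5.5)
The plan is to assemble the theorem from the three results already established in this section, in the same way as the proof of Corollary~\ref{cor:intro-main-ou}; no new estimate is needed.

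\emph{Existence.} By Theorem~\ref{thm:p-var-existence}, problem \eqref{eq:p-main-gaussian-intro} has an energy weak solution $\tilde u$ in the class \eqref{eq:p-weak-class}. This solution lies in $L^p(0,T;W^{1,p}_g(\Omega,\gamma_d;\mathbb R^N))\cap C([0,T];H)$ and has $\tilde u(\cdot,0)=g(\cdot,0)$. Proposition~\ref{prop:p-weak-to-var} then shows that $\tilde u$ satisfies \eqref{eq:p-variational-intro} for every $\tau\in(0,T]$ and every comparison map in \eqref{eq:p-comparison-class}. Hence $\tilde u$ is a variational solution in the sense of Definition~\ref{def:p-var-intro}.

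\emph{Uniqueness and identification.} Let $u$ be any variational solution. By Theorem~\ref{thm:p-var-uniqueness}, variational solutions are unique, so $u=\tilde u$. It follows that every variational solution has $\partial_tu\in L^{p'}(0,T;V_p^*)$ and satisfies the weak identity \eqref{eq:p-weak}, i.e.\ it is the energy weak solution. Conversely, any energy weak solution is variational by Proposition~\ref{prop:p-weak-to-var}, and hence equals $u$. Uniqueness of the weak solution also follows independently from Theorem~\ref{thm:p-var-existence}. Therefore the two solution sets are the same singleton, which is the claimed equivalence. Adding Proposition~\ref{prop:p-distributional-energy} would also bring in the distributional formulation, although the statement does not require it.

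\emph{Expected obstacle.} The only delicate point is a bookkeeping one. The hypotheses under which each ingredient was proved must match, so that the solution produced by Theorem~\ref{thm:p-var-existence} really lies in the function class used in Definition~\ref{def:p-var-intro} and Theorem~\ref{thm:p-var-uniqueness}. In particular, I would check three things. First, the continuity $\tilde u\in C([0,T];H)$ follows from the Lions--Magenes proposition applied to $\tilde u-g$, using $g\in C([0,T];H)$. Second, the restriction of $\tilde u$ to $(0,\tau)$ is allowed in the test argument of Proposition~\ref{prop:p-weak-to-var}, which holds because test functions supported in $(0,\tau)$ extend by zero to $L^p(0,T;V_p)$. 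Third, the condition $p\geq 2d/(d+2)$ guarantees the evolution triple. Once these are confirmed, the theorem, and with it Theorem~\ref{thm:intro-general-p}, follows immediately.
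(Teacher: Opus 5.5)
Your proposal is correct and matches the paper's proof essentially step for step. Existence comes from Theorem~\ref{thm:p-var-existence}, Proposition~\ref{prop:p-weak-to-var} shows the weak solution is variational, and the midpoint uniqueness argument of Theorem~\ref{thm:p-var-uniqueness} identifies every variational solution with it; uniqueness of the weak solution also follows from monotonicity, exactly as the paper notes.
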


\begin{proof}
Theorem~\ref{thm:p-var-existence} gives a weak solution, and
Proposition~\ref{prop:p-weak-to-var} shows that this weak solution is a variational
solution.  Theorem~\ref{thm:p-var-uniqueness} shows that no other variational
solution exists.  Hence every variational solution coincides with the weak
solution constructed above and is therefore a weak solution as well.  The uniqueness
of the weak solution also follows by testing the difference of two weak
formulations by their difference and using the monotonicity identity
\eqref{eq:operator-monotone}.
\end{proof}
\begin{proof}[Proof of Theorem~\ref{thm:intro-general-p}]
The assertion is exactly Theorem~\ref{thm:p-standard-existence}.
\end{proof}

\subsection{Selected model equations and applications}\label{subsec:standard-p-models}
{\color{black}
We conclude by highlighting three representative classes of equations covered by the
preceding framework. They illustrate nonlinear diffusion in Gaussian space, nonlinear
dependence on the unknown, and anisotropic Ornstein--Uhlenbeck operators.

\begin{enumerate}[label=\textup{(M\arabic*)},leftmargin=2.4em]
\item \textbf{The Gaussian $p$-Laplacian.}
Taking $f(\xi)=|\xi|^p/p$ gives
\begin{equation}\label{eq:model-gaussian-p-lap}
  \partial_tu=\divg\bigl(|Du|^{p-2}Du\bigr).
\end{equation}
This is the natural nonlinear analogue of the Ornstein--Uhlenbeck evolution obtained by
replacing the quadratic Gaussian Dirichlet energy with the $p$-energy. The Gaussian
$p$-Laplacian arises naturally in weighted spectral and variational problems and provides
a basic model for nonlinear diffusion with Gaussian reference measure; see, for instance,
\cite{ColesantiQinSalani2026}. The Ornstein--Uhlenbeck drift is incorporated into the
principal Gaussian-divergence operator.

\item \textbf{Semilinear Ornstein--Uhlenbeck equations.}
For $p=2$, let $W:\mathbb R^N\to[0,\infty)$ be convex and satisfy the standing growth
assumptions. The energy density
\[
  f(u,\xi)=\frac12|\xi|^2+W(u)
\]
gives
\begin{equation}\label{eq:model-vector-reaction}
  \partial_tu=\divg Du-DW(u).
\end{equation}
Allowing the energy density to depend on $u$, and not only on $Du$, makes it possible
to include a nonlinear term depending on the solution itself within the same variational
framework. Semilinear elliptic equations with Gauss measure are studied, for instance,
by D\'iaz, Feo and Posteraro \cite{DiazFeoPosteraro2021}; in the notation used here, a
prototype is
\[
  -\divg Du+c_0|u|^{q-1}u=F.
\]
Terms depending explicitly on the unknown also occur naturally in Kolmogorov equations.
For example, Issoglio and Russo \cite{IssoglioRusso2024} consider
\[
  \partial_t v+\frac12\Delta v+Dv\cdot b=\lambda v+g.
\]
These examples motivate the dependence of the general integrand $f(x,u,Du)$ on the value
variable $u$.

\item \textbf{Anisotropic Ornstein--Uhlenbeck operators.}
In the scalar case, let $A\in\mathbb R^{d\times d}$ be symmetric and positive definite.
The quadratic energy density $f(\xi)=\frac12 A\xi\cdot\xi$ yields
\begin{equation}\label{eq:model-anisotropic-ou}
  \partial_tu=\divg(A Du)
  =\operatorname{tr}(A D^2u)-(Ax)\cdot Du.
\end{equation}
More generally, finite-dimensional Ornstein--Uhlenbeck operators are of the form
\[
  \mathcal A=\operatorname{Tr}(QD^2)+\langle Bx,D\rangle;
\]
see, for instance, \cite{LunardiMetafunePallara2020}. Equation~\eqref{eq:model-anisotropic-ou}
corresponds to the symmetric choice $Q=A$ and $B=-A$. In particular, this example shows
that the variational formulation is not tied to the isotropic Laplacian considered in
Section~\ref{sec:ou-operator}.
\end{enumerate}
}

\section{The kinetic case}\label{sec:kinetic}
We conclude by explaining how the Gaussian-divergence formulation and the
comparison inequality extend to the force-free kinetic Fokker--Planck
equation without an obstacle.  We work with periodic position variables, so
that no lateral kinetic boundary conditions are needed.  The linear Cauchy
theory is already available in \cite{AAMN}; our purpose is to identify the
corresponding variational formulation and its relation to the preceding
sections, not to develop a theory of kinetic obstacle problems.

\subsection{Mixed Gaussian divergence and the velocity energy}

Set
\begin{equation}\label{kin:eq:mixed-measure-short}
 Q_T:=(0,T)\times\Torus^d_x\times\R^d_v,
 \qquad d\mu(x,v):=dx\,d\gamma_d(v),
\end{equation}
where $dx$ is Lebesgue measure on the torus and
$d\gamma_d(v)=(2\pi)^{-d/2}e^{-|v|^2/2}\,dv$.  We use $Q_\tau$ for the
same cylinder with $T$ replaced by $\tau$, and all phase-space integrals
below are over $\Torus^d\times\R^d$.
For a vector field $\mathbf F=(F_x,F_v)$, define
\begin{equation}\label{kin:eq:mixed-divergence-short}
 \operatorname{div}_{x,\gamma_v}\mathbf F
 :=\operatorname{div}_x F_x+\operatorname{div}_v F_v-v\cdot F_v.
\end{equation}
The corresponding integration-by-parts formula is
\begin{equation}\label{kin:eq:mixed-ibp-short}
 \int\varphi\,\operatorname{div}_{x,\gamma_v}\mathbf F\,d\mu
 =-\int\bigl(D_x\varphi\cdot F_x+D_v\varphi\cdot F_v\bigr)\,d\mu
\end{equation}
for smooth periodic fields $\mathbf F$ and smooth test functions
$\varphi$ compactly supported in $v$. In particular,
\begin{equation}\label{kin:eq:mixed-flux-short}
 \operatorname{div}_{x,\gamma_v}(-vu,D_vu)
 =-v\cdot D_xu+\Delta_vu-v\cdot D_vu.
\end{equation}
Consequently, the kinetic equation can be written in either of the forms
\begin{equation}\label{kin:eq:kinetic-pde-short}
 \begin{aligned}
  \partial_tu+v\cdot D_xu&=\Delta_vu-v\cdot D_vu,\\
  \partial_tu&=\operatorname{div}_{x,\gamma_v}(-vu,D_vu).
 \end{aligned}
\end{equation}

Here the Gaussian weight is part of the formulation. If
$F(t,x,v)=\rho_d(v)u(t,x,v)$, direct differentiation gives the equivalent
Lebesgue-density equation
\[
 \partial_tF+v\cdot D_xF=\Delta_vF+\operatorname{div}_v(vF).
\]
Thus $u$ is the density relative to $dx\,d\gamma_d(v)$ when $F$ is
interpreted as a density relative to $dx\,dv$.

Let
\begin{equation}\label{kin:eq:spaces-short}
 H:=L^2(\Torus^d\times\R^d,d\mu),
 \qquad
 V:=L^2\bigl(\Torus^d;H^1(\R^d,\gamma_d)\bigr),
\end{equation}
with $\|u\|_V^2:=\|u\|_H^2+\|D_vu\|_{L^2(\mu)}^2$.  These spaces form
the Hilbert triple $V\hookrightarrow H\equiv H^*\hookrightarrow V^*$, where
\begin{equation}\label{kin:eq:dual-identification-short}
 V^*\simeq L^2\bigl(\Torus^d;H^{-1}(\R^d,\gamma_d)\bigr),
 \qquad H^{-1}(\R^d,\gamma_d):=\bigl(H^1(\R^d,\gamma_d)\bigr)^*.
\end{equation}
The velocity Dirichlet energy and its first variation are
\begin{equation}\label{kin:eq:A-energy-short}
 \mathcal E(u):=\frac12\int|D_vu|^2\,d\mu,
 \qquad
 \pair{Au}{\varphi}_{V^*,V}:=\int D_vu\cdot D_v\varphi\,d\mu.
\end{equation}
Thus $A=-\operatorname{div}_{\gamma_v}D_v$, and, with
$Y:=\partial_t+v\cdot D_x$, the equation becomes $Yu+Au=0$.

The mixed measure is important here.  Velocity diffusion is symmetric with
respect to $\gamma_d(v)$, whereas periodicity in $x$ gives
\[
 \int (v\cdot D_xu)\,w\,d\mu
 =-\int u\,(v\cdot D_xw)\,d\mu
\]
for smooth periodic functions compactly supported in $v$.
The full kinetic operator therefore consists of a
skew transport operator and the symmetric velocity diffusion.  In
particular, the kinetic evolution is not the $H$-gradient flow of
$\mathcal E$ alone: that gradient flow would omit $v\cdot D_xu$.

\begin{remark}[Mixed and full Gaussian measures]
The distinction can also be seen from the calculation in the full space
$\R^d_x\times\R^d_v$.  For
$d\Gamma(x,v):=d\gamma_d(x)\,d\gamma_d(v)$ one has
\[
 \operatorname{div}_\Gamma(vu,0)
 =v\cdot D_xu-(x\cdot v)u.
\]
Adding the velocity flux $-xu$ cancels the last term, but also introduces
$-x\cdot D_vu$.  The full Gaussian measure consequently gives the identity
\begin{equation}\label{eq:kinetic-gaussian-divergence}
 \operatorname{div}_\Gamma(vu,-xu-D_vu)
 =v\cdot D_xu-x\cdot D_vu-\Delta_vu+v\cdot D_vu,
\end{equation}
which corresponds to the confined equation
\begin{equation}\label{eq:kinetic-ou}
 \partial_tu+v\cdot D_xu-x\cdot D_vu=\Delta_vu-v\cdot D_vu.
\end{equation}
For the force-free periodic model \eqref{kin:eq:kinetic-pde-short}, we use
$dx\,d\gamma_d(v)$ throughout.
\end{remark}

\subsection{Kinetic weak solutions and the Green identity}

At the natural energy regularity, the controlled derivative is $Yu$;
$\partial_tu$ and $v\cdot D_xu$ need not belong separately to
$L^2(0,T;V^*)$.  Accordingly, set
\begin{equation}\label{kin:eq:hkin-short}
 H^1_{\mathrm{kin}}(Q_\tau)
 :=\bigl\{u\in L^2(0,\tau;V):Yu\in L^2(0,\tau;V^*)\bigr\},
\end{equation}
with norm $\|u\|_{L^2(0,\tau;V)}+\|Yu\|_{L^2(0,\tau;V^*)}$.
This is the periodic Gaussian kinetic Sobolev framework of
\cite[Section~6]{AAMN}.

\begin{proposition}[Periodic kinetic traces and Green identity]
\label{kin:prop:periodic-green}
Every $u\in H^1_{\mathrm{kin}}(Q_\tau)$ has a unique representative in
$C([0,\tau];H)$. For $u,w\in H^1_{\mathrm{kin}}(Q_\tau)$ and
$0\leq s\leq t\leq\tau$,
\begin{equation}\label{kin:eq:trace-green-short}
 (u(t),w(t))_H-(u(s),w(s))_H
 =\int_s^t\bigl[\pair{Yu}{w}_{V}+\pair{Yw}{u}_{V}\bigr]\,dr.
\end{equation}
In particular,
\begin{equation}\label{kin:eq:trace-bound-short}
 \sup_{0\leq t\leq\tau}\|u(t)\|_H^2
 \leq\tau^{-1}\|u\|_{L^2(0,\tau;H)}^2+
    2\|Yu\|_{L^2(0,\tau;V^*)}\|u\|_{L^2(0,\tau;V)}.
\end{equation}
\end{proposition}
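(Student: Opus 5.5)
Our plan is to prove Proposition~\ref{kin:prop:periodic-green} by density of smooth functions in $H^1_{\mathrm{kin}}(Q_\tau)$, as in the Lions--Magenes argument, with $\partial_t$ replaced by $Y$. The skew-symmetry of $v\cdot D_x$ makes $Y$ act on smooth functions like a time derivative in the energy identity. We may also follow \cite[Section~6]{AAMN}, but we outline a self-contained route.

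\emph{Step 1: identity for smooth functions.} Let $\mathcal S$ be the space of functions that are $C^1$ in $t$, smooth and periodic in $x$, and smooth and compactly supported in $v$. For $u,w\in\mathcal S$ the product rule gives $\partial_t(u,w)_H=\int(\partial_tu\,w+u\,\partial_tw)\,d\mu$. The displayed skew-symmetry of transport with respect to $d\mu$ gives $\int (v\cdot D_xu)w\,d\mu+\int u\,(v\cdot D_xw)\,d\mu=0$. Adding these yields $\frac{d}{dr}(u,w)_H=\int(Yu\,w+Yw\,u)\,d\mu$, and integrating from $s$ to $t$ gives \eqref{kin:eq:trace-green-short} on $\mathcal S$. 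Taking $w=u$ gives $\|u(t)\|_H^2=\|u(s)\|_H^2+2\int_s^t\pair{Yu}{u}_V\,dr$. We average over $s\in(0,\tau)$ and bound $|\int_s^t\pair{Yu}{u}|\le\|Yu\|_{L^2(V^*)}\|u\|_{L^2(V)}$. This gives \eqref{kin:eq:trace-bound-short} for smooth $u$; we will check the constant factor $2$ against the factor in front of the pairing when writing details.

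\emph{Step 2: density, the main obstacle.} We must show that $\mathcal S$ is dense in $H^1_{\mathrm{kin}}(Q_\tau)$. First extend $u$ past $0$ and $\tau$ by reflection in time; because of the transport term the reflection must also flip $v\mapsto -v$. The map $(t,x,v)\mapsto(-t,x,-v)$ preserves $Y$ up to sign and preserves $\gamma_d$, which is even in $v$, so the extension stays in the kinetic class. Then mollify in $(t,x)$ by convolution, which commutes with $Y$ because the coefficient $v$ does not depend on $(t,x)$. Next cut off in $v$ with $\chi(v/R)$. The commutator $[Y,\chi(v/R)]$ vanishes because $\chi$ depends only on $v$, and $D_v\chi(v/R)=O(1/R)$, so the cut-off converges in $L^2(V)$. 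The dual norm of $\chi_R Yu$ needs care in $H^{-1}(\R^d,\gamma_d)$: write $\langle \chi_R Yu,\varphi\rangle=\langle Yu,\chi_R\varphi\rangle$ and use that multiplication by $\chi_R$ is uniformly bounded on $H^1(\gamma_d)$. Finally mollify in $v$. This creates a commutator $[v\cdot D_x,\ast_v\eta_\epsilon]$, which is bounded by $\epsilon\,\|D_xu\|$-type terms; these are harmless after the $x$-mollification, for fixed $x$-regularization parameter. Alternatively, we would simply invoke the density result of \cite[Section~6]{AAMN}.

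\emph{Step 3: conclusion.} By \eqref{kin:eq:trace-bound-short}, an approximating sequence from $\mathcal S$ is Cauchy in $C([0,\tau];H)$. Its limit is a continuous representative of $u$, unique since two continuous representatives agree a.e.\ in $t$. Both sides of \eqref{kin:eq:trace-green-short} are continuous with respect to the $H^1_{\mathrm{kin}}$ norm and the sup norm, so the identity and the bound pass to the limit.
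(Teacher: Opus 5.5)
There is a step in Step 2 that fails: the time reflection does not keep the extension in the kinetic class. Set $U:=u$ on $\{t>0\}$ and $U(t,x,v):=u(-t,x,-v)$ on $\{t<0\}$. You correctly have $YU=-(Yu)(-t,x,-v)$ on $\{t<0\}$. However, the two one-sided traces at $t=0$ are $u(0,x,v)$ and $u(0,x,-v)$, and these do not match. Take $u$ smooth up to $t=0$ and a test function $\varphi$ whose support crosses $\{t=0\}$. Integrating by parts on each side gives
\[
\langle YU,\varphi\rangle=\int_{-\tau}^{\tau}\!\int G\,\varphi\,d\mu\,dt+\int\bigl[u(0,x,v)-u(0,x,-v)\bigr]\varphi(0,x,v)\,d\mu(x,v),
\]
where $G$ denotes the one-sided values of $YU$ just described. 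So $YU$ contains the singular part $\delta_{t=0}\otimes\bigl(u(0,x,v)-u(0,x,-v)\bigr)$. This is not in $L^2(V^*)$ unless $u(0)$ is even in $v$.

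Reflecting without the velocity flip does not help either. In that case $Y[u(-t,x,v)]=-(Yu)(-t,x,v)+2\,v\cdot D_xu(-t,x,v)$, and $v\cdot D_xu$ is not controlled in $L^2(0,\tau;V^*)$. The same obstruction appears at $t=\tau$. The proposition is precisely about the endpoints (existence of traces, and the identity for $s=0$ and $t=\tau$). So your self-contained density argument does not go through as written, unless you fall back on a density result cited from \cite{AAMN}.

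\textbf{Standard repair.} Do not extend. Split $u=\theta_1u+\theta_2u$ with a smooth partition of unity in time; then $Y(\theta_iu)=\theta_iYu+\theta_i'u$ with $\theta_i'u\in L^2(0,\tau;H)$. Translate each piece inward in time, which commutes with $Y$, and only then mollify in $t$.

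\textbf{The paper's route.} The paper shows that smooth density is not needed at all. Regularize in $x$ only; the paper uses the Fourier projections $P_n$, and your $x$-mollification would serve equally well. Then $D_xu_n\in L^2(0,\tau;V)$. The Gaussian moment bound $\|vq\|_{L^2(\gamma_d)}^2\le 2d\|q\|_{L^2(\gamma_d)}^2+4\|D_vq\|_{L^2(\gamma_d)}^2$ then gives $v\cdot D_xu_n\in L^2(0,\tau;H)$. Hence $\partial_tu_n=P_nYu-v\cdot D_xu_n\in L^2(0,\tau;V^*)$. The classical Lions--Magenes lemma in the triple $V\hookrightarrow H\hookrightarrow V^*$ therefore applies to $u_n$, and it already handles the endpoints. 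Periodicity removes $\int(v\cdot D_xu_n)u_n\,d\mu$. Your Steps 1 and 3 then complete the proof: the averaged trace bound, the Cauchy property in $C([0,\tau];H)$, and polarization. This removes the time extension, the velocity cut-off, and the velocity mollification with its commutator.
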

\begin{proof}
We give a periodic proof that uses only spatial Fourier projections.
First, Gaussian integration by parts for a smooth compactly supported
function $q(v)$ gives, for each $i$,
\[
 \int v_i^2 q^2\,d\gamma_d
 =\int q^2\,d\gamma_d+2\int v_iq\,\partial_iq\,d\gamma_d
 \leq\|q\|_{L^2(\gamma_d)}^2+
       \tfrac12\|v_iq\|_{L^2(\gamma_d)}^2+
       2\|\partial_iq\|_{L^2(\gamma_d)}^2.
\]
Summing and passing to the closure gives
\begin{equation}\label{kin:eq:gaussian-moment-bound}
 \|vq\|_{L^2(\gamma_d)}^2
 \leq2d\|q\|_{L^2(\gamma_d)}^2+4\|D_vq\|_{L^2(\gamma_d)}^2
 \quad (q\in H^1(\mathbb R^d,\gamma_d)).
\end{equation}
The density needed here follows by truncation in $v$ and ordinary
mollification on bounded sets.
Let $P_n$ be the Fourier projection in $x$ onto modes with
$|k|\leq n$, taking symmetric modes so that real-valued functions remain
real-valued. These projections converge strongly to the identity on
$V$, $H$, and $V^*$ and commute with $Y$ and $D_v$. For $u_n=P_nu$,
\eqref{kin:eq:gaussian-moment-bound} and the finite number of spatial
modes imply $v\cdot D_xu_n\in L^2(0,\tau;H)$. Hence
\[
 \partial_tu_n=P_nYu-v\cdot D_xu_n\in L^2(0,\tau;V^*).
\]
The ordinary Hilbert-triple energy identity applies to $u_n$.
Periodicity gives $\int(v\cdot D_xu_n)u_n\,d\mu=0$, so
\[
 \|u_n(t)\|_H^2-\|u_n(s)\|_H^2
 =2\int_s^t\pair{Yu_n}{u_n}_{V^*,V}\,dr.
\]
Choosing one time at which the squared norm is at most its time average
and using Cauchy--Schwarz proves \eqref{kin:eq:trace-bound-short} for
$u_n$. The same estimate for $u_n-u_m$, together with convergence in
$L^2(0,\tau;V)$ and of the material derivatives in $L^2(0,\tau;V^*)$,
shows that $(u_n)$ is Cauchy in $C([0,\tau];H)$. Its limit is the
continuous representative of $u$. Passing to the limit gives the
energy identity, and polarization gives \eqref{kin:eq:trace-green-short}.
\end{proof}

Given $u_0\in H$, an energy weak solution of
\eqref{kin:eq:kinetic-pde-short} is a function
$u\in H^1_{\mathrm{kin}}(Q_T)$ with $u(0)=u_0$ and
\begin{equation}\label{kin:eq:unconstrained-weak}
 \int_0^T\pair{Yu}{\varphi}_{V^*,V}\,dt
 +\int_0^T\int D_vu\cdot D_v\varphi\,d\mu\,dt=0
 \qquad\text{for every }\varphi\in L^2(0,T;V).
\end{equation}
For smooth test functions, periodic in $x$ and compactly supported in time
and velocity, the equivalent distributional identity is
\begin{equation}\label{kin:eq:weighted-distribution-pairing}
 -\int_{Q_T}uY\varphi\,d\mu\,dt
 +\int_{Q_T}D_vu\cdot D_v\varphi\,d\mu\,dt=0.
\end{equation}
Conversely, for $u\in L^2(0,T;V)$ this identity gives $Yu=-Au$ in
$L^2(0,T;V^*)$, and hence the kinetic time trace and the energy weak
formulation.  The initial value is prescribed through that trace.

The periodic linear Cauchy theorem
\cite[Proposition~6.10]{AAMN}, with no force and unit velocity diffusion,
gives a unique such energy weak solution for every $u_0\in H$. The cited Cauchy theorem and the periodic Green identity proved
above are the analytic inputs for the comparison argument. The kinetic
Cauchy theorem is not a consequence of the parabolic theorem in
Section~\ref{sec:general-operators}.

\subsection[The variational formulation]{The variational formulation}

The comparison map is denoted by $w$, since $v$ is the velocity variable.
The replacements in the parabolic formulation are
\[
 \partial_t\ \longrightarrow\ Y,\qquad
 D\ \longrightarrow\ D_v,\qquad
 d\gamma_d(x)\ \longrightarrow\ dx\,d\gamma_d(v).
\]

\begin{definition}[Variational solution of the kinetic equation]
\label{kin:def:unconstrained-variational}
Let $u_0\in H$.  A function
\[
 u\in L^2(0,T;V)\cap C([0,T];H),\qquad u(0)=u_0,
\]
is a variational solution of \eqref{kin:eq:kinetic-pde-short} if, for every
$\tau\in(0,T]$ and every $w\in H^1_{\mathrm{kin}}(Q_\tau)$,
\begin{equation}\label{kin:eq:unconstrained-variational}
 \begin{aligned}
 &\int_0^\tau\pair{Yw}{w-u}_{V}\,dt
   +\frac12\int_0^\tau\int
                    \bigl(|D_vw|^2-|D_vu|^2\bigr)\,d\mu\,dt\\
 &\qquad\geq
   \frac12\|(w-u)(\tau)\|_H^2
   -\frac12\|w(0)-u_0\|_H^2.
 \end{aligned}
\end{equation}
\end{definition}

No estimate for $Yu$ is assumed in this definition.  Only the comparison
map has a material derivative in the dual space.  Nor is $w(0)=u_0$
required: the initial discrepancy is included explicitly on the
right-hand side.

\begin{proposition}[Equivalence for the linear kinetic equation]
\label{kin:prop:unconstrained-equivalence}
For every $u_0\in H$, the energy weak solution of
\eqref{kin:eq:kinetic-pde-short} is the unique variational solution in the
sense of Definition~\ref{kin:def:unconstrained-variational}.
In particular, every variational solution belongs to
$H^1_{\mathrm{kin}}(Q_T)$ and satisfies
\eqref{kin:eq:unconstrained-weak}.
\end{proposition}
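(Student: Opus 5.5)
The plan has three steps: show that the weak solution is a variational solution, prove uniqueness in the variational class by a midpoint argument, and conclude equivalence. This is the same pattern as Propositions~\ref{prop:weak-implies-var} and \ref{prop:var-uniqueness-ou}, with $\partial_t$ replaced by $Y$ and the Lions--Magenes identity replaced by the Green identity of Proposition~\ref{kin:prop:periodic-green}.

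\emph{Weak implies variational.} Let $u\in H^1_{\mathrm{kin}}(Q_T)$ be the energy weak solution given by \cite[Proposition~6.10]{AAMN}. Fix $\tau\in(0,T]$ and $w\in H^1_{\mathrm{kin}}(Q_\tau)$. Then $w-u\in H^1_{\mathrm{kin}}(Q_\tau)$, and I test \eqref{kin:eq:unconstrained-weak} on $(0,\tau)$ with $\varphi=w-u$. Convexity gives $\frac12|D_vw|^2-\frac12|D_vu|^2\ge D_vu\cdot D_v(w-u)$, hence
\[
\int_0^\tau\pair{Yu}{w-u}_V\,dt+\frac12\int_0^\tau\int\bigl(|D_vw|^2-|D_vu|^2\bigr)\,d\mu\,dt\ge0.
\]
Applying \eqref{kin:eq:trace-green-short} with both arguments equal to $w-u$ gives
\[
\int_0^\tau\pair{Y(w-u)}{w-u}_V\,dt=\tfrac12\|(w-u)(\tau)\|_H^2-\tfrac12\|w(0)-u_0\|_H^2 .
\]
Adding this to the previous inequality and using $Yw=Yu+Y(w-u)$ yields \eqref{kin:eq:unconstrained-variational}.

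\emph{Uniqueness.} Let $u_1,u_2$ be variational solutions and set $m:=\frac12(u_1+u_2)\in L^2(0,\tau;V)\cap C([0,\tau];H)$ with $m(0)=u_0$. Since $Ym$ is not known to lie in the dual space, $m$ must be regularized along characteristics. I use the kinetic analogue of the Landes mollification,
\[
m_h(t,x,v):=e^{-t/h}\,S_tu_0(x,v)+\frac1h\int_0^te^{(s-t)/h}\,m(s,x-(t-s)v,v)\,ds,
\]
where $S_tq(x,v):=q(x-tv,v)$ is the free transport group. The transport shift $(x,v)\mapsto(x-rv,v)$ preserves $d\mu$, so it is an isometry of $H$.

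The step I expect to be the main obstacle is that the shift does not commute with $D_v$: one has $D_v[m(x-rv,v)]=(D_vm)(x-rv,v)-r(D_xm)(x-rv,v)$, and $D_xm$ is uncontrolled. Two things need care here.

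First, the initial seed. Choosing $u_0$ itself as the seed may fail to give $m_h\in L^2(0,\tau;V)$, so I would instead take a seed $u_{0,\varepsilon}$ that is smooth in $v$ and spectrally localized in $x$. Its initial discrepancy $\frac12\|u_{0,\varepsilon}-u_0\|_H^2$ is harmless because it appears explicitly on the right-hand side of \eqref{kin:eq:unconstrained-variational} and can be sent to zero afterwards, in the spirit of Lemma~\ref{lem:mollified-seed}.

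Second, the integral term. To avoid the $D_x$ error altogether, I would first apply the Fourier projection $P_n$ from the proof of Proposition~\ref{kin:prop:periodic-green}, which commutes with $Y$ and $D_v$. On the finitely many modes, $v\cdot D_xP_nm\in L^2(0,\tau;H)$ by the moment bound \eqref{kin:eq:gaussian-moment-bound}. This allows the ordinary time regularization $R_h$ of Lemma~\ref{lem:time-regularization} in the triple $(V,H,V^*)$: the function $w_{h,n}:=R_h(P_nm)+e^{-t/h}P_nu_{0}$ lies in $H^1_{\mathrm{kin}}(Q_\tau)$.

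Now take $w_{h,n}$ as comparison map in the two variational inequalities for $u_1$ and $u_2$ and add them. The cross term splits into three parts:
\[
2\int_0^\tau\pair{\partial_t w_{h,n}}{w_{h,n}-m}_V\,dt,
\]
which is at most $o(1)$ as $h\downarrow0$ by the argument giving \eqref{eq:bdm-affine-one-sided};
\[
2\int_0^\tau\pair{v\cdot D_xw_{h,n}}{w_{h,n}-P_nm}_V\,dt,
\]
which vanishes in the limit by skew-symmetry and strong convergence for fixed $n$; and a remainder involving $m-P_nm$. Let $h\downarrow0$ first and then $n\to\infty$. The remainder in $m-P_nm$ is the delicate point, since it pairs $Y P_nm$ against $m-P_nm$ and $YP_nm$ is only bounded in $L^2(0,\tau;V^*)$ through $P_n$, not through $m$. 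If this fails, the fallback is to project before averaging: write the inequality for $P_nu_i$, which is legitimate because $P_n$ is self-adjoint on $H$ and commutes with all operators involved.

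\emph{Conclusion.} After the limits, one obtains the analogue of \eqref{eq:ou-midpoint-fixed},
\[
2\int_0^\tau\int\tfrac12|D_vm|^2\,d\mu\,dt-\sum_{i=1}^2\int_0^\tau\int\tfrac12|D_vu_i|^2\,d\mu\,dt\ \ge\ \tfrac12\sum_{i=1}^2\|(m-u_i)(\tau)\|_H^2 .
\]
By strict convexity the left-hand side is $\le0$, so $u_1(\tau)=u_2(\tau)$ for every $\tau\in(0,T]$. Hence every variational solution coincides with the weak solution. In particular every variational solution belongs to $H^1_{\mathrm{kin}}(Q_T)$ and satisfies \eqref{kin:eq:unconstrained-weak}, which proves the proposition.
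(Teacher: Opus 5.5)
Your weak-to-variational step is exactly the paper's argument. For uniqueness, however, you miss the observation that makes the paper's proof essentially one line. The energy weak solution $U$ from \cite[Proposition~6.10]{AAMN} itself lies in $H^1_{\mathrm{kin}}(Q_\tau)$, so it is an admissible comparison map for any variational solution $u$. With $w=U$ the initial term vanishes. Since $YU=-AU$, the left-hand side of \eqref{kin:eq:unconstrained-variational} equals $-\int_0^\tau\int D_vU\cdot D_v(U-u)\,d\mu\,dt+\frac12\int_0^\tau\int(|D_vU|^2-|D_vu|^2)\,d\mu\,dt=-\frac12\int_0^\tau\int|D_v(U-u)|^2\,d\mu\,dt$. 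Hence $\frac12\|(U-u)(\tau)\|_H^2+\frac12\int_0^\tau\int|D_v(U-u)|^2\,d\mu\,dt\le0$, and $u=U$. No midpoint, no regularization of $u$ and no Fourier truncation are needed. The midpoint device is only required when no solution regular enough to serve as a comparison map is available, as in the obstacle problem.

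Your midpoint route, as written, does not close, for two reasons. First, you leave the remainder $2\int_0^\tau\pair{Yw_{h,n}}{P_nm-m}_V\,dt$ open and offer only an unverified fallback. Second, the seed $e^{-t/h}P_nu_0$ is not in $L^2(0,\tau;V)$ when $u_0\in H$ only, so $w_{h,n}$ is not admissible.

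Both defects can be repaired:
\begin{itemize}
\item Use a seed $P_nq$ with $q\in V$ close to $u_0$ in $H$. Then $e^{-t/h}D_vP_nq\to0$ in $L^2$ as $h\downarrow0$. The discrepancy $\|P_nq-u_0\|_H^2$ sits on the right-hand side and disappears in the final limits.
\item The remainder is not delicate; it is identically zero. The functions $w_{h,n}$, $\partial_tw_{h,n}=-\frac1h(w_{h,n}-P_nm)$ and $v\cdot D_xw_{h,n}$ carry only $x$-Fourier modes with $|k|\le n$. The function $P_nm-m$ carries only modes with $|k|>n$. So $\pair{Yw_{h,n}}{P_nm-m}_V=0$ for a.e.\ $t$ by orthogonality.
\end{itemize}
With these fixes, letting $h\downarrow0$, then $n\to\infty$, then $q\to u_0$ does yield the midpoint inequality, so your route is valid. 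It simply reproves, with considerable machinery, what the direct comparison with $U$ gives at once.
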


\begin{proof}
Let $U\in H^1_{\mathrm{kin}}(Q_T)$ be the energy weak solution provided by
the linear Cauchy theorem.  Given $w\in H^1_{\mathrm{kin}}(Q_\tau)$, test
the equation for $U$ by $w-U$.  The Green identity
\eqref{kin:eq:trace-green-short} and the quadratic energy identity give
\begin{equation}\label{kin:eq:unconstrained-remainder}
 \begin{aligned}
 &\int_0^\tau\pair{Yw}{w-U}_{V}\,dt
   +\frac12\int_0^\tau\int
                   \bigl(|D_vw|^2-|D_vU|^2\bigr)\,d\mu\,dt\\
 &=\frac12\|(w-U)(\tau)\|_H^2
   -\frac12\|w(0)-u_0\|_H^2
   +\frac12\int_0^\tau\int|D_v(w-U)|^2\,d\mu\,dt.
 \end{aligned}
\end{equation}
The last term is nonnegative, so $U$ is a variational solution.

Conversely, let $u$ be any variational solution and use $w=U$ as a
comparison map in \eqref{kin:eq:unconstrained-variational}.  The initial
term vanishes.  Since $YU=-AU$, the left-hand side is exactly
\[
 \begin{aligned}
 &-\int_0^\tau\int D_vU\cdot D_v(U-u)\,d\mu\,dt
 +\frac12\int_0^\tau\int(|D_vU|^2-|D_vu|^2)\,d\mu\,dt\\
 &\qquad=-\frac12\int_0^\tau\int|D_v(U-u)|^2\,d\mu\,dt.
 \end{aligned}
\]
Therefore
\[
 \frac12\|(U-u)(\tau)\|_H^2
 +\frac12\int_0^\tau\int|D_v(U-u)|^2\,d\mu\,dt\leq0.
\]
It follows that $u(\tau)=U(\tau)$ in $H$ for every $\tau\in(0,T]$.
Thus $u=U$, which proves uniqueness and the asserted material-derivative
regularity without regularizing $u$ in the kinetic variables.
\end{proof}

\begin{proof}[Proof of Proposition~\ref{prop:intro-kinetic}]
This is exactly Proposition~\ref{kin:prop:unconstrained-equivalence}.
\end{proof}

\end{document}